\def\@tocline#1#2#3#4#5#6#7{\relax
  \ifnum #1>\c@tocdepth % then omit
  \else
    \par \addpenalty\@secpenalty\addvspace{#2}%
    \begingroup \hyphenpenalty\@M
    \@ifempty{#4}{%
      \@tempdima\csname r@tocindent\number#1\endcsname\relax
    }{%
      \@tempdima#4\relax
    }%
    \parindent\z@ \leftskip#3\relax \advance\leftskip\@tempdima\relax
    \rightskip\@pnumwidth plus4em \parfillskip-\@pnumwidth
    #5\leavevmode\hskip-\@tempdima
      \ifcase #1
       \or\or \hskip 1em \or \hskip 2em \else \hskip 3em \fi%
      #6\nobreak\relax
    \dotfill\hbox to\@pnumwidth{\@tocpagenum{#7}}\par% <---- \dotfill -> \hfill
    \nobreak
    \endgroup
  \fi}
\newtheorem{thm}{Theorem}[section]
\newtheorem*{thm*}{Theorem}
\crefname{lem}{Lemma}{Lemmata}
\newtheorem{lem}[thm]{Lemma}
\newtheorem{corollary}[thm]{Corollary}
\newtheorem{cor}[thm]{Corollary}
\newtheorem{prop}[thm]{Proposition}
\newtheorem{thmx}{Theorem}
\theoremstyle{definition}
\newtheorem{defn}[thm]{Definition}
\newtheorem{dfn}[thm]{Definition}
\newtheorem{rmk}[thm]{Remark}
\newtheorem{example}[thm]{Example}
\DeclareMathOperator{\Ker}{\mathrm{Ker}}
\DeclareMathOperator{\Hom}{\mathrm{Hom}}
\DeclareMathOperator{\Tor}{\mathrm{Tor}}
\DeclareMathOperator{\tr}{\mathrm{tr}}
\DeclareMathOperator{\im}{\mathrm{Im}}
\newcommand{\calc}{{\mathcal{C}}}
\newcommand{\cald}{{\mathcal{D}}}
\newcommand{\calg}{{\mathcal{G}}}
\newcommand{\TAP}{\mathsf{TAP}}
\newcommand{\MC}{\mathrm{MC}}
\newcommand{\ZZhat}{\widehat{\mathbb{Z}}}
\newcommand*{\BS}{\mathrm{BS}}
\DeclareMathOperator{\rk}{\mathrm{rk}}
\newcommand*{\Frac}{\mathrm{Frac}}
\newcommand{\NN}{\mathbb{N}}
\newcommand{\ZZ}{\mathbb{Z}}
\newcommand{\RR}{\mathbb{R}}
\newcommand{\QQ}{\mathbb{Q}}
\newcommand{\FF}{\mathbb{F}}
\tikzstyle{blackNode}=[fill=black, draw=black, shape=circle]
\newcommand{\nov}[3]{{\mathrm{Nov}({#1 #2, #3}})}
\DeclareMathOperator{\id}{id}
\newcommand{\cgr}[2]{#1 \llbracket #2 \rrbracket} %completed group ring
\newcommand{\onto}{\twoheadrightarrow}
\def\Z{\mathbb{Z}}
\def\iff{if and only if }
\title[Profinite rigidity of fibring]{Profinite rigidity of fibring}
\author{Sam Hughes}
\email{sam.hughes.maths@gmail.com}
\author{Dawid Kielak}
\email{kielak@maths.ox.ac.uk}
\address{Mathematical Institute, Andrew Wiles Building, University of Oxford, Oxford, OX2 6GG, UK}
\subjclass{}
\begin{document}

\maketitle

\begin{abstract}
We introduce the classes of TAP groups, in which various types of algebraic fibring are detected by the non-vanishing of twisted Alexander polynomials. We show that finite products of finitely presented LERF groups lie in the class $\TAP_1(R)$ for every integral domain $R$, and deduce that algebraic fibring is a profinite property for such groups. We offer stronger results for algebraic fibring of products of limit groups, as well as applications to profinite rigidity of Poincar\'e duality groups in dimension $3$ and RFRS groups.
\end{abstract}

% \tableofcontents

\section{Introduction}
Let $G$ be a finitely generated residually finite group.  The \emph{profinite genus} of $G$, denoted by $\calg(G)$, is defined as the set of isomorphism classes of finitely generated residually finite groups with the same finite quotients as $G$. A group $G$ is called \emph{almost profinitely rigid} if $\calg(G)$ is finite and \emph{profinitely rigid} if $|\calg(G)|=1$.

The study of (almost) profinite rigidity has motivated and been the subject of a vast amount of research.  For example, finitely generated nilpotent groups are almost profinitely rigid \cite{Pickel1971} and so are polycyclic groups \cite{GrunewaldPickelSegal1980}.

Many groups are not profinitely rigid, for example, there are metabelian groups with infinite genus \cite{Pickel1974}, Platonov--Tavgen' showed that $F_2\times F_2$ is not profinitely rigid \cite{PlatonovTavgen1986}, Pyber \cite{Pyber2004} showed the genus could be uncountable,  Bridson--Grunewald gave examples of the failure of profinite rigidity amongst the class of finitely presented groups \cite{BridsonGrunewald2004} answering several questions of Grothendieck, and Bridson \cite{Bridson2016} showed that the profinite genus amongst finitely presented groups can be infinite.

In general, profinite rigidity remains a very mysterious subject.
Somewhat surprisingly, one family of groups for which  we are developing a fair deal of understanding of profinite phenomena is the family of fundamental groups of compact $3$-manifolds.
 One particularly noteworthy statement is the theorem of Bridson--McReynolds--Reid--Spitler~\cite{Bridsonetal2020} saying that the fundamental groups of some hyperbolic $3$-manifolds (including the Weeks manifold) are profinitely rigid, that is, each is distinguished from every other finitely generated residually finite group by the set of isomorphism classes of its finite quotients. In loc. cit. the authors conjecture that every Kleinian group is profinitely rigid.

Restricting attention solely to $3$-manifold groups, we have two remarkable results: First,  Jaikin-Zapirain~\cite{JaikinZapirain2020} showed that if the profinite completion of the fundamental group of a compact orientable aspherical $3$-manifold is isomorphic to that of $\pi_1(\Sigma)\rtimes \Z$ with $\Sigma$ a compact orientable surface, then the manifold fibres over the circle.  Second, Liu~\cite{Liu2020} proved that there are at most finitely many diffeomorphism types of finite-volume hyperbolic $3$-manifolds with isomorphic profinite completions of their fundamental groups.  The key point of Liu's work involves aligning a fibred map to $\Z$ from each pair of profinitely isomorphic fundamental groups and encoding the dynamics of them into the profinite completion. Both Jaikin-Zapirain's and Liu's theorems rely in a crucial way on the following result of Friedl--Vidussi proved in the sequence of papers \cite{FriedlVidussi2008,FriedlVidussi2011annals,FriedlVidussi2013}.

\begin{thm}[cf. {\cite[Theorem 1.1]{FriedlVidussi2011annals}}]
	\label{Friedl Vidussi general}
	Let $R$ be a  Noetherian unique factorisation domain (UFD). 
Let $M$ be a compact, orientable, connected $3$-manifold with empty or toroidal boundary. An epimorphism $\varphi \colon \pi_1(M) \to \Z$ is induced by a fibration $M \to \mathbb S^1$ \iff for every epimorphism $\alpha \colon \pi_1(M) \onto Q$ with finite image the associated first twisted Alexander polynomial $\Delta^{\varphi, \alpha}_{1,R}$ over $R$ is non-zero.
\end{thm}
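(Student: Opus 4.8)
The plan is to treat the two implications separately. The forward direction is a direct homological computation, so I would dispose of it first. Suppose $\varphi$ is induced by a fibration $M\to\mathbb S^1$ with fibre a compact surface $\Sigma$, so that $\pi_1(M)\cong\pi_1(\Sigma)\rtimes_{h}\Z$ with $h$ the monodromy and $\ker\varphi=\pi_1(\Sigma)$ finitely generated. Fixing $\alpha\colon\pi_1(M)\onto Q$ with $Q$ finite and building $M$ from a finite CW-structure on $\Sigma$ together with one $1$-cell for the stable letter, the twisted chain complex $C_\ast(M;R[Q][t^{\pm1}])$ becomes, up to summands affecting only $\Delta_0$, the algebraic mapping cone of $t\cdot\mathrm{id}-h_\ast$ on $C_\ast(\Sigma;R[Q])$, where $h_\ast$ is the map induced by the monodromy. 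A standard computation then identifies $\Delta^{\varphi,\alpha}_{1,R}$, up to units, with a characteristic-polynomial-type determinant of $h_\ast$ on $H_\ast(\Sigma;R[Q])$; in particular it is monic, hence non-zero. (The UFD hypothesis is precisely what makes $\Delta^{\varphi,\alpha}_{1,R}$ well defined up to units.)

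For the converse I would argue by contraposition: assuming $\varphi$ is not fibred, produce a single finite quotient with vanishing twisted Alexander polynomial. First reduce to $M$ irreducible and $\varphi$ primitive, handling the prime summands and the exceptional pieces such as $\mathbb S^2\times\mathbb S^1$ directly; Stallings' fibration theorem then gives that $\ker\varphi$ is not finitely generated, and hence the same holds for the restriction of $\varphi$ to every finite-index subgroup. Now invoke that $\pi_1(M)$ is virtually RFRS — a consequence of the work of Agol, Wise, Przytycki--Wise and Liu on virtual specialness of $3$-manifold groups — so that Agol's virtual fibring criterion yields a finite regular cover $p\colon\widetilde M\to M$, with deck group $Q$, such that $p^\ast\varphi$ lies in the closure of a fibred cone of the Thurston norm ball of $\widetilde M$. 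Since $\ker(p^\ast\varphi)$ is again not finitely generated, $p^\ast\varphi$ is itself non-fibred, and therefore lies on the \emph{boundary} of that fibred cone rather than in its interior.

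It remains to convert this boundary position into an honest vanishing. By Shapiro's lemma the twisted Alexander polynomial $\Delta^{\varphi,\alpha}_{1,R}$ attached to $\alpha\colon\pi_1(M)\onto Q$ agrees, up to units, with the single-variable Alexander polynomial of $\widetilde M$ in the direction $p^\ast\varphi$, equivalently with the specialisation of the multivariable Alexander polynomial of $\widetilde M$ along $p^\ast\varphi$; equivalently again, it vanishes exactly when $H_2\big((\widetilde M)_{p^\ast\varphi};R\big)$ has positive rank over $R[t^{\pm1}]$. For a class in the interior of a fibred cone this specialisation is monic of degree the Thurston norm, since the infinite cyclic cover is then homotopy equivalent to the fibre; the key claim is that on the boundary of a fibred cone — in a cover whose first Betti number has been made large, which the RFRS tower supplies — the relevant face of the Alexander polytope of $\widetilde M$ is no longer a vertex and the specialisation collapses to $0$. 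I expect this step to be the main obstacle: it is exactly where twisted Alexander polynomials, the Thurston norm and virtual fibring must be played off against one another, and where the Noetherian UFD hypothesis on $R$ is genuinely used, so that degrees of Alexander polynomials are additive and the comparison with the Thurston norm is valid. The Seifert-fibred and graph-manifold pieces — where the Thurston norm may degenerate and the fibring input instead comes from Przytycki--Wise — and the bookkeeping along the JSJ tori are absorbed into the same scheme.
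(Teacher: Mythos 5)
The paper does not supply a proof of this statement; it is quoted from Friedl--Vidussi \cite{FriedlVidussi2011annals}, and the surrounding discussion records the route taken there: first the LERF case (from \cite{FriedlVidussi2008}, recast group-theoretically in \cref{graph of groups friedl vidussi} and \cref{LERF implies TAP} of the present paper), then a reduction of the general case to the LERF case using Wilton--Zalesskii and Wise. Your forward direction is correct and amounts to a mapping-cone version of \cref{prop.easy}, so let me focus on the converse.

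Your converse has a genuine gap, and you have located it yourself. The plan is: pass by Agol's virtual fibring criterion to a finite cover $\widetilde M$ in which $p^\ast\varphi$ lies on the boundary of the closure of a fibred cone, trade $\Delta^{\varphi,\alpha}_{1,R}$ via Shapiro for the untwisted Alexander polynomial of $p^\ast\varphi$ on $\widetilde M$, and conclude that this vanishes because the class sits on the boundary. That last implication is not a theorem, and nothing in the proposal supplies a mechanism to make it one. McMullen's Alexander-norm inequality only bounds the Thurston norm from below; it says nothing in the other direction and certainly does not force the Alexander polynomial to degenerate along the boundary of a fibred cone. Agol's criterion controls the position of $p^\ast\varphi$ relative to the Thurston norm ball, but gives no grip on $H_1$ of the infinite cyclic cover of $\widetilde M$ determined by $p^\ast\varphi$, which is what actually computes the polynomial. (A small slip too: $\Delta_1$ is the order of $H_1$ of that infinite cyclic cover, not of $H_2$.)

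The converse does not go through Thurston norms at all. Since $\pi_1(M)$ is finitely presented and $\varphi$ is not fibred, Bieri--Strebel gives an HNN or amalgamated splitting over a finitely generated subgroup $A\leqslant\ker\varphi$ whose attaching maps are not all surjective. Separability of $A$ (LERF in the special case, Wilton--Zalesskii and Wise in general) produces a finite quotient $\alpha\colon\pi_1(M)\onto Q$ with $\alpha(A)$ a proper subgroup of the image of a vertex group; running Mayer--Vietoris with coefficients $RQ[t^{\pm 1}]$ and comparing ranks over $\Frac(R)(t)$ then forces $H^{\varphi,\alpha}_{1,R}$ to have positive $R[t^{\pm1}]$-rank, i.e.\ $\Delta^{\varphi,\alpha}_{1,R}=0$. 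This is precisely what \cref{graph of groups friedl vidussi} does. The missing ingredient in your proposal is exactly such a mechanism converting ``non-fibred'' into ``vanishing at a specific finite quotient''; Agol's criterion on its own does not provide it.
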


The result relies in a key way on a special case proved by Friedl--Vidussi in an earlier work \cite{FriedlVidussi2008}, where the group $\pi_1(M)$ is additionally assumed to be locally extended residually finite (LERF, or subgroup separable). Once this is established, the above result follows by a series of arguments based on the work of Wilton--Zalesskii~\cite{WiltonZalesskii2010} and Wise~\cite{Wise2012}.

The interest in fibring has surpassed its roots in manifold topology finding numerous applications within the realm of geometric group theory, for example in the construction of subgroups of hyperbolic groups with exotic finiteness properties \cite{JankiewiczNorinWise2021,ItalianoMartelliMigliorini2020,ItalianoMartelliMigliorini2021,IsenrichMartelliPy2022,BattistaMartelli2022,Fisher2022,IsenrichPy2022}, exotic higher rank phenomena \cite{Kropholler2018a,Hughes2022}, the existence of uncountably many groups of type $\mathsf{FP}$ \cite{Leary2018,Leary2018b,KrophollerLearySoroko2020,BrownLeary2020}, a connection between fibring of RFRS groups and $\ell^2$-Betti numbers \cite{Kielak2020RFRS,Fisher2024,FisherHughesLeary2023}, and the construction of analogues of the Thurston polytope for various classes of groups \cite{FriedlLueck2017,FriedlTillmann2020,Kielak2020BNS,HenselKielak2021}.

Because of the widespread applicability of fibring, and in particular in view of the results of Friedl--Vidussi and Jaikin-Zapirain cited above, 
it is very desirable to be able to show that a group fibres if some group in its genus does. Moreover, since $3$-manifold groups are not inherently more interesting from the profinite perspective than other groups, it is entirely natural to try to find profinite properties of groups that ensure that fibring is shared by all groups in a genus.

Hence, the version of \cref{Friedl Vidussi general} for LERF groups is the starting point for our investigations. First, we introduce the notion of \emph{TAP groups} (standing for Twisted Alexander Polynomial), that is groups in which the twisted Alexander polynomials control algebraic fibring, see \cref{TAP def}.  Roughly, the twisted Alexander polynomials are invariants that describe the module structure of the homology of kernels of epimorphisms to  groups that are virtually $\ZZ$.
 
 We show that in fact all finitely presented LERF groups are TAP -- see \cref{LERF implies TAP} for the precise (more general) statement. This amounts to showing the following.

\begin{thmx}
	Let $G$ be a finitely presented LERF group and let $R$  be an integral domain. An epimorphism $\varphi \colon G \onto \Z$ is algebraically fibred \iff for every epimorphism $\alpha \colon G \onto Q$ with finite image the associated first twisted Alexander polynomial over $R$ is non-zero.
\end{thmx}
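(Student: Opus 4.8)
The plan is to prove the two implications separately; the forward one (algebraically fibred $\Rightarrow$ all polynomials non-zero) is routine, while the converse carries all the weight and is where the LERF hypothesis enters. For the forward direction, fix a finite presentation $2$-complex $X$ for $G$ and an epimorphism $\alpha\colon G\onto Q$ onto a finite group. Unwinding the definition, the first twisted Alexander module associated to $\varphi$ and $\alpha$ over $R$ is $H_1$ of the complex $C_*(\widetilde X)\otimes_{\ZZ G}R[Q][t^{\pm 1}]$, where $G$ acts through the pair $(\alpha,\varphi)$; by the covering space interpretation of twisted homology it is $(\ker\varphi\cap\ker\alpha)^{\mathrm{ab}}\otimes_{\ZZ}R$, equipped with the $R[t^{\pm1}]$-module structure induced by conjugation by a chosen lift of a generator of $\ZZ$. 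If $\varphi$ is algebraically fibred then $\ker\varphi$ is finitely generated, hence so is its finite-index subgroup $\ker\varphi\cap\ker\alpha$, and therefore this module is finitely generated over $R$. A finitely generated module over an integral domain $R$ that happens to carry an $R[t^{\pm1}]$-module structure is automatically $R[t^{\pm1}]$-torsion (tensor with $\Frac R$, pick a non-zero annihilator of the resulting finite-dimensional localisation inside $\Frac R[t^{\pm1}]$, and clear denominators), so its order $\Delta^{\varphi,\alpha}_{1,R}$ is non-zero.

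For the converse I argue by contraposition: assume $N=\ker\varphi$ is not finitely generated and produce a finite quotient $\alpha$ with $\Delta^{\varphi,\alpha}_{1,R}=0$. First I reduce to an untwisted statement about a finite cover. For a finite-index normal subgroup $K\trianglelefteq G$ and $\alpha\colon G\onto G/K$, the computation above identifies the first twisted Alexander module with $(N\cap K)^{\mathrm{ab}}\otimes_\ZZ R$. Tensoring with $\Frac R$ turns the twisted chain complex into a complex of finitely generated free modules over the principal ideal domain $\Frac R[t^{\pm1}]$ (here finiteness of $X$ is used), so this module becomes finitely generated over $\Frac R[t^{\pm1}]$, and a finitely generated module over a PID is torsion exactly when it is finite-dimensional over the ground field. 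Hence $\Delta^{\varphi,\alpha}_{1,R}=0$ if and only if $\dim_{\Frac R}H_1(N\cap K;\Frac R)=\infty$. It therefore suffices to find a finite-index normal subgroup $K\trianglelefteq G$ for which $H_1(N\cap K;\Frac R)$ is infinite-dimensional; equivalently, a finite cover of $X$ whose ordinary Alexander polynomial over $\Frac R$ with respect to the restriction of $\varphi$ vanishes.

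Now the heart. Since $G$ is finitely generated and $N$ is not, choose a generating set $\{t,h_1,\dots,h_k\}$ of $G$ with $\varphi(t)=1$ and $h_i\in N$; put $H=\langle h_1,\dots,h_k\rangle$ and $H_n=\langle\, t^{i}h_jt^{-i}: |i|\le n\,\rangle$. Then $N=\bigcup_n H_n$, the inclusions $H_n\subseteq H_{n+1}$ are strict for infinitely many $n$ (otherwise $N$ would be finitely generated), and each $H_n$ has infinite index in $N$. Every $H_n$ is finitely generated, hence separable in $G$ by the LERF hypothesis. The plan is to exploit this: feeding the group elements witnessing the strict inclusions $H_n\subsetneq H_{n+1}$ along a cofinal subchain into the separability of the corresponding subgroups $H_n$, and then assembling the resulting finite quotients, via a compactness argument, into a single finite quotient $G\onto Q$, one arranges that in the associated finite cover the group $N\cap K$ retracts onto a subgroup with infinite-dimensional rational first homology (for instance an infinite-rank free subgroup), forcing $\dim_{\Frac R}H_1(N\cap K;\Frac R)=\infty$. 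This is modelled on the argument of Friedl--Vidussi~\cite{FriedlVidussi2008} for LERF $3$-manifold groups: the role of subgroup separability is precisely to convert the purely algebraic failure of finite generation of $N$ into a homological witness visible inside a single finite cover, replacing the sutured-manifold and Thurston-norm technology available in the manifold setting.

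I expect this last step to be the main obstacle: ruling out that $N$ fails to be finitely generated while every finite-index normal $K\trianglelefteq G$ has $H_1(N\cap K;\Frac R)$ of finite dimension. The difficulty is that one must control the first homology of the chain $\{H_n\}$ after passage to finite-index subgroups, so that the transition maps $H_n^{\mathrm{ab}}\to H_{n+1}^{\mathrm{ab}}$ are not all annihilated; this is where the structural description of how $N$ sits inside $G$ --- the Bieri--Strebel picture of $\Sigma^1$ and the attendant ascending-HNN decomposition --- has to be married carefully with subgroup separability.
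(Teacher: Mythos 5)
Your forward direction is essentially the paper's (Shapiro's lemma identifying the twisted module with the homology of $\ker\varphi\cap\ker\alpha$, finite generation of the kernel forcing the module to be a finitely generated $R$-module, hence torsion over $R[t^{\pm 1}]$), so that part is fine. Your reduction of vanishing of $\Delta^{\varphi,\alpha}_{1,R}$ to infinite $\Frac(R)$-dimension of $H_1(\ker\varphi\cap\ker\alpha;\Frac R)$ via finiteness of the presentation complex is also correct and matches the spirit of \cref{vanishing} and \cref{untwisting}.

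The converse, however, has a genuine gap, and you correctly sense it yourself. Your plan builds an exhausting chain $H_0\subseteq H_1\subseteq\cdots$ of finitely generated subgroups of $N=\ker\varphi$, uses LERF to separate each strict inclusion $H_n\subsetneq H_{n+1}$, and then tries to assemble the resulting finite quotients, \emph{via a compactness argument}, into one finite quotient $\alpha\colon G\onto Q$ in which $N\cap\ker\alpha$ has infinite-dimensional $H_1$. There is no compactness argument available here: each strict inclusion $H_n\subsetneq H_{n+1}$ hands you a finite quotient of its own, and there is no reason those quotients stabilise or admit a common refinement that still does the job for infinitely many $n$. Indeed LERF only lets you separate one finitely generated subgroup from one element at a time; to conclude you would need to see \emph{infinitely many} independent homology classes in a single finite cover, which the chain $\{H_n\}$ alone does not provide. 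Your closing remark that ``the Bieri--Strebel picture of $\Sigma^1$ and the attendant ascending-HNN decomposition has to be married carefully with subgroup separability'' is exactly the diagnosis, but you have not carried it out.

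The paper avoids this entirely by invoking Bieri--Strebel's Theorem A up front: since $G$ is of type $\mathsf{FP}_2$, $\varphi$ is realised by an HNN splitting $G\cong B\ast_\iota$ with $B$ and the associated subgroup $A\leqslant B$ both \emph{finitely generated}, and $\varphi$ the quotient by $B$. If $\iota$ is not an isomorphism then $A$ (or $C$) is a proper subgroup of $B$; since $A$ is finitely generated, a \emph{single} application of LERF produces one finite quotient $\alpha\colon G\onto Q$ with $\alpha(A)\lneq\alpha(B)$. The contradiction is then obtained homologically, not by finding an infinite-rank free retract: the Mayer--Vietoris sequence of the HNN splitting with coefficients in $RQ[t^{\pm 1}]$ forces, once one tensors with $\Frac(R)(t)$ and uses that $H_1$ and $H_0$ of $G$ are torsion, the equality
\[
\dim_{F(t)}F(t)\otimes_R(RQ)_A=\dim_{F(t)}F(t)\otimes_R(RQ)_B,
\]
i.e.\ $|Q:\alpha(A)|=|Q:\alpha(B)|$, contradicting properness. (A preliminary pass with $\alpha=\mathrm{tr}$ is needed to show $\varphi|_B=0$ so that the $H_0$ terms are computed by coinvariants of $RQ$ rather than being torsion.) So the missing ingredients in your write-up are: (i) replace the ad hoc exhausting chain by the single finitely generated HNN splitting supplied by Bieri--Strebel; (ii) replace the retraction-onto-a-free-subgroup idea by the Mayer--Vietoris rank count. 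With those two substitutions the LERF hypothesis is used exactly once, cleanly, and the proof closes.
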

Here, an epimorphism $\varphi\colon G\onto \Z$ is \emph{algebraically fibred} if $\ker \varphi$ is finitely generated.  The group $G$ is \emph{algebraically fibred} if it admits such an epimorphism.  Also, we are talking about vanishing of Alexander polynomials over arbitrary integral domains, which might seem worrying, as the definition of the polynomial requires $R$ to be a Noetherian UFD. It does however make sense to talk about vanishing even when the polynomial is itself not well defined, see \cref{def vanishing}.

We use the above to show that for finite products of finitely presented LERF groups, algebraic fibring is a profinite property.  

\begin{thmx}
	\label{profinite intro}
	Let $G_A$ and $G_B$ be finite products of finitely presented LERF groups.  Suppose $G_A$ and $G_B$ have isomorphic profinite completions. Then, the group $G_A$ is algebraically fibred \iff $G_B$ is. 
\end{thmx}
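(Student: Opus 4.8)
The plan is to use Theorem~A to recast \cref{profinite intro} as a statement about twisted Alexander polynomials and then to show that the relevant condition is visible to the profinite completion. By Theorem~A, for a finitely presented LERF group $G$ and any integral domain $R$, an epimorphism $\varphi\colon G\onto\ZZ$ has finitely generated kernel \iff $\Delta^{\varphi,\alpha}_{1,R}\neq 0$ for every finite-image epimorphism $\alpha\colon G\onto Q$; hence $G$ is algebraically fibred \iff such a $\varphi$ exists. I would run everything over $R=\FF_p$ (a field, so $\FF_p[t^{\pm1}]$ is a PID and all coefficient rings are finite), and, since \cref{profinite intro} is symmetric in $G_A$ and $G_B$, it suffices to prove that $G_A$ algebraically fibred implies $G_B$ algebraically fibred.

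The next step is the profinite dictionary. Fix an isomorphism $\Phi\colon\widehat{G_A}\xrightarrow{\cong}\widehat{G_B}$. It induces: a correspondence $\alpha_A\leftrightarrow\alpha_B$ between finite-image epimorphisms $G_A\onto Q$ and $G_B\onto Q$; an isomorphism $\Hom(G_A,\ZZhat)\cong\Hom(G_B,\ZZhat)$, and since the $G_i$ are finitely generated the groups $\Hom(G_i,\ZZ)\otimes_\ZZ\ZZhat$ are therefore isomorphic and $b_1(G_A)=b_1(G_B)=:b$; and, for any epimorphism $\varphi_A\colon G_A\onto\ZZ$, a continuous epimorphism $\psi_B:=\widehat{\varphi_A}\circ\Phi^{-1}\colon\widehat{G_B}\onto\ZZhat$ (here $\widehat{\varphi_A}$ is the continuous extension of $\varphi_A$) whose reduction modulo $k$ is carried by $\Phi$ to that of $\varphi_A$, for every $k\geq 1$. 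Consequently, for each $k$ and each matched pair $(\alpha_A,\alpha_B)$, the finite covers $N^{(k)}_A:=\ker\big(G_A\to\ZZ/k\times Q\big)$ and $N^{(k)}_B:=\ker\big(G_B\to\ZZ/k\times Q\big)$ cut out by $(\varphi_A\bmod k,\alpha_A)$ and $(\psi_B\bmod k,\alpha_B)$ satisfy $\widehat{N^{(k)}_A}\cong\widehat{N^{(k)}_B}$, and therefore $\dim_{\FF_p}H_1(N^{(k)}_A;\FF_p)=\dim_{\FF_p}H_1(N^{(k)}_B;\FF_p)$, these dimensions being profinite invariants of the covers.

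The heart of the transfer is that non-vanishing of $\Delta^{\varphi,\alpha}_{1,\FF_p}$ is detected by the growth of these finite covers. The twisted Alexander module $\cala^{\varphi,\alpha}$ is finitely generated over the PID $\FF_p[t^{\pm1}]$, and (cf.\ \cref{def vanishing}) $\Delta^{\varphi,\alpha}_{1,\FF_p}\neq 0$ \iff $\cala^{\varphi,\alpha}$ is $\FF_p[t^{\pm1}]$-torsion \iff $\dim_{\FF_p}\big(\cala^{\varphi,\alpha}/(t^k-1)\big)$ stays bounded as $k\to\infty$ (a positive-rank free summand would force growth at least linear in $k$). On the other hand, by Shapiro's lemma and right-exactness $\cala^{\varphi,\alpha}/(t^k-1)$ differs from $H_1(N^{(k)};\FF_p)$ only by a multiplicity at most $|Q|$ and a $\Tor_1$-correction governed by $H_0$ and bounded in $k$. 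Hence "$\Delta^{\varphi,\alpha}_{1,\FF_p}\neq 0$ for all $\alpha$" is equivalent to "for every finite $\alpha$, $\sup_k\dim_{\FF_p}H_1(N^{(k)};\FF_p)<\infty$"; by the previous paragraph this property passes unchanged from $\varphi_A$ to $\psi_B$. Call a continuous epimorphism $\widehat{G_B}\onto\ZZhat$ with this property \emph{profinitely fibred}, so that $\psi_B$ is profinitely fibred.

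It then remains to descend from the possibly irrational $\psi_B$ to an honest epimorphism $\varphi_B\colon G_B\onto\ZZ$ that is still profinitely fibred; feeding such a $\varphi_B$ back into Theorem~A completes the proof. This descent is the main obstacle. It is harmless when $b=1$: there $\psi_B$ is a $\ZZhat^\times$-multiple of the unique primitive $\varphi_B\colon G_B\onto\ZZ$, and scaling a surjection onto $\ZZ/k$ by a unit leaves its kernel unchanged, so $\varphi_B$ and $\psi_B$ define the same covers $N^{(k)}_B$ and $\varphi_B$ is profinitely fibred. For $b\geq 2$ one must reconcile the congruence approximability of $\psi_B$ with the real geometry of $\Hom(G_B,\RR)$: by the Bieri--Neumann--Strebel theorem the algebraically fibred classes are precisely the primitive integral points of the open cone over $\Sigma^1(G_B)\cap(-\Sigma^1(G_B))$, which by Theorem~A is exactly the non-vanishing locus of all twisted Alexander polynomials. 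I would approximate $\psi_B$ by integral classes $\varphi^{(j)}_B$ with $\varphi^{(j)}_B\equiv\psi_B\pmod{k_j}$ and $k_j\to\infty$, so that for each fixed $k$ the covers $N^{(k)}$ of $\varphi^{(j)}_B$ eventually coincide with those of $\psi_B$ and hence have bounded first $\FF_p$-homology, and then use this together with openness of the fibred cone (and upper semicontinuity of twisted Betti numbers of covers) to extract a genuine fibred class. Making this last argument precise — the analogue of the step carried out, in the $3$-manifold setting, by Friedl--Vidussi and Boileau--Friedl — is where I expect the real difficulty to lie.
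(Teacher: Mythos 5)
You have correctly reduced \cref{profinite intro} to Theorem~A (the $\TAP_1$ property for finitely presented LERF groups, \cref{LERF implies TAP}) and to \cref{LERF symmetric BNS}, and you have correctly located where the work lies: given a fibred integral character $\varphi_A$ on $G_A$, the profinite isomorphism $\Phi$ only hands you a continuous character $\psi_B=\widehat{\varphi_A}\circ\Phi^{-1}\colon\widehat{G_B}\to\widehat\ZZ$, and one must descend to an honest $\varphi_B\in H^1(G_B;\ZZ)$ whose twisted Alexander polynomials are non-zero. Your first-cover reformulation (non-vanishing of $\Delta^{\varphi,\alpha}_{1,\FF_p}$ is equivalent to boundedness of $\dim_{\FF_p}H_1(N^{(k)};\FF_p)$, up to a bounded $\Tor$-correction) is essentially correct and is in the spirit of Liu's proof of \cref{prop.ann.Liu}. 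But the final descent step, which you yourself flag as the real difficulty, is a genuine gap and the proposed fix does not work as stated. Approximating $\psi_B$ by integral classes $\varphi^{(j)}_B\equiv\psi_B\pmod{k_j}$ only forces the covers of $\varphi^{(j)}_B$ and $\psi_B$ to agree for $k\mid k_j$; for any fixed $j$ this gives no control on the large-$k$ covers of $\varphi^{(j)}_B$, which is exactly what is needed for its twisted Alexander polynomial not to vanish. Worse, ``openness of the fibred cone'' is openness in the Euclidean topology on $H^1(G_B;\RR)$, while the $\varphi^{(j)}_B$ converge to $\psi_B$ $p$-adically; the two topologies are incompatible, so proximity mod $k_j$ gives no handle on membership in $\Sigma^1(G_B)\cap-\Sigma^1(G_B)$. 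There is also no obvious upper semicontinuity in the congruence direction.

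The paper closes this gap with Liu's \emph{matrix coefficient module} and the resulting $\epsilon$-specialisation (\cref{epsilon,epsilon pullbacks}): the induced map $\Psi_1\colon\widehat{H_A}\to\widehat{H_B}$ on free abelianisations actually has image inside $H_B\otimes_\ZZ\MC(\Psi_1)$ for a finitely generated free $\ZZ$-submodule $\MC(\Psi_1)\subset\ZZhat$, and sending a basis of $\MC(\Psi_1)$ to $\{0,1\}$ (compatibly with reduction mod $2$) produces an integral specialisation $\Phi_\epsilon\colon H_A\to H_B$ of finite-index image. The integral character $\psi$ obtained this way satisfies the crucial containment $\ker(\widehat\rho)\subseteq\ker(\psi)$ (see the proof of \cref{prop.profinite.fibre}), so that Liu's \cref{prop.ann.Liu}\eqref{prop.ann.Liu2} transports the non-vanishing of twisted Alexander polynomials from $\psi$ to the $\widehat\ZZ$-valued character, and items \eqref{prop.ann.Liu1}, \eqref{prop.ann.Liu3}, \eqref{prop.ann.Liu4} of that proposition (together with $1$-goodness, which is automatic by \cref{1-good}) carry the statement across $\Phi$ and back down to the original integral character. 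This is not an approximation argument at all: rather than converging to $\psi_B$, one constructs a single integral character directly from the algebraic structure of $\MC(\Psi_1)$ and uses the annihilator domination \cref{prop.ann.Liu}\eqref{prop.ann.Liu2} as the engine. To repair your proof you would need to reinvent this specialisation step, or find a genuinely different way to pass from a $\widehat\ZZ$-valued character to an integral one while keeping control of the twisted Alexander modules.
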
 
 Again, this is really a corollary of the more general \cref{dim 1} combined with \cref{cor.products,LERF symmetric BNS}.
 
This result can be used to study profinite properties of those high-dimen\-sion\-al manifolds whose fundamental groups are products of LERF groups; examples include products of surfaces, geometric $3$-manifolds, higher dimensional nil and sol manifolds, and many bundles where generic fibres have amenable fundamental groups.
This is significant progress in the study of such manifolds, since the tools that work in $3$-manifolds can be adapted to higher dimensions only in exceptional circumstances.
 
 Limit groups (and more generally residually free groups) are widely studied, see for example \cite{Sela2001I,Wilton2008,BestvinaFeighn2009} and the references therein.  An even more general (and more technical) result is given by \cref{prop.profinite.fibre}, where we deal with algebraic semi-fibring of higher degree (see \cref{def semi-fibred}).
 Combining this with work of Bridson, Howie, Miller and Short \cite{BridsonHowieMillerShort2009} on finiteness properties of residually free groups, we show the following.
 
 \begin{thmx}
 	\label{prod limit profinite intro}
 	Let $\FF$ be a finite  field.  Let $G_A$ and $G_B$ be profinitely isomorphic finite products of limit groups.  The group $G_A$ is $\mathsf{FP}_n(\FF)$-semi-fibred if and only if $G_B$ is.
 \end{thmx}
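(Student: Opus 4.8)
The plan is to deduce \cref{prod limit profinite intro} from the general profinite--semi-fibring statement \cref{prop.profinite.fibre}: one checks that a finite product of limit groups meets the hypotheses of that proposition over the finite field $\FF$, and then simply invokes it. Since $G_A$ and $G_B$ are \emph{both} assumed to be finite products of limit groups, it is enough to show that every finite product of limit groups satisfies those hypotheses; in particular there is no need to match up the individual limit-group factors across the profinite isomorphism.

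The verification splits into two parts. \emph{The factors.} A limit group is finitely presented, of type $\mathsf F$, coherent, LERF (Wilton), and good in the sense of Serre; hence by \cref{LERF implies TAP} each limit-group factor lies in the $\TAP$-class over $\FF$ that is relevant to \cref{prop.profinite.fibre} in every homological degree, and it carries all the auxiliary finiteness and goodness properties that proposition requires. \emph{The product.} The point that is genuinely new here, and the reason limit groups rather than arbitrary finitely presented LERF groups appear, is that a finite product $G = L_1 \times \dots \times L_k$ of limit groups need \emph{not} be LERF -- so \cref{LERF implies TAP} does not apply to $G$ itself -- yet $G$ still belongs to the required class. One establishes this via a Künneth computation: for a character $\varphi\colon G\to\ZZ$ with restrictions $\varphi_i\colon L_i\to\ZZ$ and a finite quotient $\alpha=\alpha_1\times\dots\times\alpha_k$, the twisted Alexander modules of $(G,\varphi,\alpha)$ over $\FF$ decompose, via the Künneth formula (no higher $\Tor$ appears because $\FF$ is a field), into tensor products of the twisted Alexander modules of the $(L_i,\varphi_i,\alpha_i)$; consequently the elementary ideals multiply, and non-vanishing of the twisted Alexander polynomials of $G$ up to degree $n$ is equivalent to a purely combinatorial condition on the fibring levels of the factors $L_i$. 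This is precisely the combinatorics that governs the finiteness properties of kernels of characters of direct products (the product formula for the invariants $\Sigma^n$), which is what $\mathsf{FP}_n(\FF)$-semi-fibring records in the sense of \cref{def semi-fibred}. Thus $G$ lies in the class, the hypotheses of \cref{prop.profinite.fibre} are met for both $G_A$ and $G_B$, and applying it to the given isomorphism $\widehat{G_A}\cong\widehat{G_B}$ yields the desired equivalence.

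The main obstacle is the product step. First, the Künneth decomposition of twisted Alexander modules must be made to interact correctly with the passage to elementary ideals over $\FF$ in the degenerate situations where some $\varphi_i$ vanishes, so that a factor contributing no fibring still contributes the correct (possibly infinitely generated) piece to the kernel; the assumption that $\FF$ is finite -- in particular a field, and small enough that the profinite and goodness machinery can detect $\mathsf{FP}_n(\FF)$ homologically -- is what keeps this bookkeeping clean, and the argument would not go through verbatim over $\ZZ$ or a general integral domain. Second, one must match the resulting numerical condition on the factors exactly against the group-theoretic product formula for $\Sigma^n$ and confirm that the matched condition is precisely what \cref{def semi-fibred} demands; this identification, rather than any single cohomological computation, is the technical heart of the proof.

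One should also double-check two routine points along the way: that finite products of limit groups are themselves of type $\mathsf F$ and good (so that \cref{prop.profinite.fibre} may legitimately be applied to $G_A$ and $G_B$ as stated), and that $H_1$ of such a product is finitely generated, so that every map to $\widehat{\ZZ}$ arising from the profinite isomorphism has finitely generated image and the semi-fibring data can be compared in the first place. These follow from the corresponding facts for limit groups together with the Künneth formula, and are expected to be immediate once the two obstacles above are overcome.
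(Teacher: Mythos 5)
Your high-level plan is the right one and matches the paper: establish that a finite product of limit groups lies in $\TAP_\infty(\FF)$ and is cohomologically good and of type $\mathsf F$, then invoke \cref{Prop.pro.higherdim} (the corollary of \cref{prop.profinite.fibre}) for both directions of the equivalence. However, the crucial step -- showing $\TAP_\infty(\FF)$ for the product -- is where your argument has a genuine gap, and the mechanism you propose does not work as stated.

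Your ``Künneth computation'' claims that, for a product finite quotient $\alpha=\alpha_1\times\dots\times\alpha_k$, the twisted Alexander module $H_{\bullet,\FF}^{\varphi,\alpha}$ decomposes as a tensor product of the twisted Alexander modules $H_{\bullet,\FF}^{\varphi_i,\alpha_i}$ of the factors. This is false: the coefficient module $\FF Q[t^{\pm1}]$ carries a \emph{diagonal} action of $G=\prod L_i$ on the single Laurent variable $t$ through $\varphi=\sum\varphi_i$, so it is \emph{not} an outer tensor product of $\FF L_i$-modules. Künneth over $\FF$ would apply if one instead used the multi-variable module $\FF Q[t_1^{\pm1},\dots,t_k^{\pm1}]$, but passing from there back to the single-variable twisted Alexander module involves the specialisation $t_1=\dots=t_k=t$, and that specialisation can collapse or create torsion in ways that your sketch does not control. (For the simplest instance, $H_1(\ZZ^2;\FF[t^{\pm1}])$ with the diagonal action is already not the tensor product of the two one-variable answers.) Consequently the asserted ``multiplication of elementary ideals'' has no justification, and the ``purely combinatorial condition on fibring levels'' you want to extract never materialises.

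What the paper actually does in \cref{limit TAP infinity} is quite different at this step. It fixes a character $\varphi$, lets $k$ be the least degree where $\varphi$ fails to be $\mathsf{FP}_k(\FF)$-semi-fibred, and then \emph{exhibits one specific} finite quotient (a product quotient, arranged so the restriction to each remaining factor is surjective and abelian factors are killed) whose untwisted Alexander module of the corresponding finite-index subgroup has infinite $\FF$-dimension in a particular degree $p$. The infinite-dimensionality is not a Künneth consequence; it is \cite[Theorem~7.2]{BridsonHowieMillerShort2009}, a deep structural theorem about coabelian subgroups of products of limit groups. Finally Meinert's inequality is used to force $p=k$, which is exactly the dimension bookkeeping you flagged as ``the technical heart'' but did not supply. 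This BHMS input is the essential ingredient that your Künneth plan is (implicitly) trying to replace, and without it the argument does not close; it is also why limit groups, rather than arbitrary finitely presented LERF groups, appear in the statement.

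Two smaller points. First, your assertion that each limit-group factor lies in $\TAP_\infty(\FF)$ ``by \cref{LERF implies TAP}'' overshoots: that theorem only yields $\TAP_1(R)$, and upgrading to $\TAP_\infty$ for a single limit group is not addressed (and is in fact not needed by the paper, which goes directly to the product). Second, your reading of the TAP condition is more demanding than necessary: to show $\varphi\notin\Sigma^n$ forces a vanishing polynomial, you only need to \emph{exhibit one} quotient $\alpha$ where the polynomial vanishes, so restricting to convenient product quotients is perfectly legitimate -- you do not need to understand all finite quotients, which is another reason the full-strength Künneth decomposition you sought is beside the point.
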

 
\cref{profinite intro} finds another application in the study of profinite rigidity of Poincar\'e duality groups which should be viewed as a step towards the `profinite' Cannon conjecture: \emph{If $G$ is a word hyperbolic group whose profinite completion is a profinite-Poincar\'e duality group in dimension $3$, then $G$ is the fundamental group of a closed connected hyperbolic $3$-manifold.}

\begin{thmx}
	\label{pd3 intro}
	Let $G_A$ be a LERF $\mathsf{PD}_3$-group. Let $G_B$ be the fundamental group of a closed connected hyperbolic $3$-manifold.
	If $\widehat{G_A}\cong\widehat{G_B}$, then $G_A$ is the   
	fundamental group of a closed connected hyperbolic $3$-manifold.
\end{thmx}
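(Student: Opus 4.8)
The plan is to pass to a finite-index subgroup that fibres, transport the fibration across the profinite isomorphism using \cref{profinite intro}, recognise the resulting group as a surface-by-$\Z$ group and hence a closed aspherical $3$-manifold group, upgrade this to hyperbolicity by profinite rigidity of $3$-manifold geometry, and finally descend from the finite-index subgroup back to $G_A$ via Mostow rigidity. First I would invoke Agol's virtual fibring theorem: since $G_B=\pi_1(N)$ with $N$ a closed hyperbolic $3$-manifold, there is a finite-index subgroup $H_B\le G_B$ isomorphic to $\pi_1(\Sigma_B)\rtimes\Z$ for some closed surface $\Sigma_B$, so $H_B$ is a finitely presented LERF group (closed hyperbolic $3$-manifold groups are LERF) that is algebraically fibred. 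Transporting the closure of $H_B$ along the isomorphism $\widehat{G_A}\cong\widehat{G_B}$ yields an open subgroup of $\widehat{G_A}$ whose intersection with $G_A$ is, since $G_A$ is finitely generated and residually finite, a finite-index subgroup $H_A$ with $\widehat{H_A}\cong\widehat{H_B}$. Finite-index subgroups of $\mathsf{PD}_3$-groups are again $\mathsf{PD}_3$-groups, so $H_A$ is a finitely presented LERF group; by \cref{profinite intro} applied to the pair $(H_A,H_B)$, the group $H_A$ is algebraically fibred.

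Next I would identify $H_A$ concretely. Fix $\varphi\colon H_A\onto\Z$ with finitely generated kernel $K$; as $\Z$ is free, $H_A\cong K\rtimes\Z$. Since $K$ has infinite index in the $\mathsf{PD}_3$-group $H_A$, Strebel's theorem gives $\cd K\le 2$, and a finitely generated normal subgroup of a $\mathsf{PD}_3$-group with infinite cyclic quotient is $\mathsf{FP}$ (Bieri); hence $K$ is a $\mathsf{PD}_2$-group, so by Eckmann--Müller--Linnell it is the fundamental group of a closed surface, and $H_A=\pi_1(M_A)$ for a closed aspherical $3$-manifold $M_A$ fibring over $\mathbb{S}^1$. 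Since $\widehat{\pi_1(M_A)}\cong\widehat{H_B}$ is the profinite completion of the fundamental group of a closed hyperbolic $3$-manifold, the profinite invariance of the Thurston geometry of closed $3$-manifolds (Wilton--Zalesskii) forces $M_A$ to be hyperbolic.

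Finally I would descend to $G_A$. Replacing $H_A$ by its normal core, we may assume $H_A\trianglelefteq G_A$ has finite index and is realised as a discrete, cocompact, torsion-free subgroup of $\Isom(\mathbb{H}^3)$. As $H_A$ is centreless and $\mathsf{PD}_3$-groups are torsion-free, the conjugation homomorphism $G_A\to\Aut(H_A)$ is injective; by Mostow rigidity every automorphism of $H_A$ is conjugation by an isometry of $\mathbb{H}^3$ that normalises $H_A$, so $\Aut(H_A)$ is identified with the normaliser $N$ of $H_A$ in $\Isom(\mathbb{H}^3)$, which contains $H_A$ with finite index and is therefore itself discrete and cocompact. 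Hence $G_A$ embeds into $\Isom(\mathbb{H}^3)$ as a discrete, torsion-free subgroup containing $H_A$ with finite index; it is then cocompact, and $\mathbb{H}^3/G_A$ is a closed connected hyperbolic $3$-manifold with fundamental group $G_A$.

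The hard part is really the first step: it is precisely \cref{profinite intro} that lets one deduce that $H_A$ fibres from the fact that $H_B$ does, and the rest of the argument never starts without it. The other ingredients --- Agol's virtual fibring theorem, the results of Strebel and Bieri on subgroups of $\mathsf{PD}_3$-groups, the profinite invariance of $3$-manifold geometry, and Mostow rigidity --- are by now standard, and the only remaining delicate point is the passage in the second step from ``$K$ finitely generated'' to ``$K$ a surface group'', which is what converts the purely algebraic fibring into an honest surface-bundle structure on a closed $3$-manifold.
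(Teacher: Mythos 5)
Your proof follows the same overall architecture as the paper's: pass to a fibred finite-index subgroup of $G_B$ via Agol, transport the fibring to the corresponding $H_A\le G_A$ using the profinite rigidity of fibring, use Strebel to recognise $H_A$ as surface-by-$\Z$ and hence a closed $3$-manifold group, invoke Wilton--Zalesskii for hyperbolicity, and descend to $G_A$. Two remarks.

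First, there is a small but genuine gap where you assert that $H_A$ is \emph{finitely presented} and then invoke \cref{profinite intro}. Finite presentability of $\mathsf{PD}_3$-groups is not known (this is part of why Wall's question is hard), and the paper carefully avoids it: $\mathsf{PD}_3$-groups are of type $\mathsf{FP}(\Z)$, hence $\mathsf{FP}_2(\Z)$, which is what \cref{LERF implies TAP} actually requires, and one then applies \cref{dim 1} together with the symmetry of $\Sigma^1$ from \cite[Theorem~5]{Hillman2020b} and \cref{HNN criterion}. Your argument goes through verbatim once you swap \cref{profinite intro} for these more general statements, but as written it relies on an unproven claim.

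Second, your descent step is a legitimately different route from the paper's. You establish that $H_A$ is a closed hyperbolic $3$-manifold group (applying Wilton--Zalesskii at the level of $H_A$) and then lift to $G_A$ via the conjugation embedding $G_A\hookrightarrow\Aut(H_A)$, Mostow rigidity, and finiteness of $\Out(H_A)$; this is sound (the centraliser $C_{G_A}(H_A)$ is trivial since $H_A$ is centreless and $G_A$ is torsion-free). The paper instead cites \cite[Lemma~8.2]{Hillman2020} to conclude that $G_A$ is a closed $3$-manifold group once $H_A$ is, and then applies Wilton--Zalesskii at the level of $G_A$. Your Mostow route is more self-contained and avoids the specific lemma of Hillman; the paper's route stays within the $3$-manifold/$\mathsf{PD}_3$ literature. (One tiny omission: the step from ``$K$ a surface group and $H_A=K\rtimes\Z$'' to ``$H_A$ is a closed $3$-manifold group'' uses Dehn--Nielsen--Baer, which the paper makes explicit and you leave implicit.)
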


Finally, \cref{detecting betti} implies that for a cohomologically good RFRS group $G$ of type $\mathsf{F}$, the profinite completion of $G$ detects the degree of acyclicity of $G$ with coefficients in the skew-field $\cald_{\FF G}$ introduced by Jaikin-Zapirain; here $\FF$ is a finite  field. The skew-field $\cald_{\FF G}$ can be thought of as an analogue of the Linnell skew-field in positive characteristic, and hence can be used to define a positive-characteristic version  of $\ell^2$-homology.

\subsection*{Acknowledgements}
This work has received funding from the European Research Council (ERC) under the European Union's Horizon 2020 research and innovation programme (Grant agreement No. 850930).  The first author thanks Martin Bridson for a helpful conversation.  Both authors thank the referees for helpful comments which improved the exposition of this paper.

\section{Preliminaries}

Throughout, all rings are associative and unital, and ring morphisms preserve units. All modules are left-modules, unless stated otherwise. In particular, resolutions will be left-resolutions (that is consisting of left modules), and hence coefficients in homology will be right-modules (and quite often bimodules).

Integral domains and fields are always commutative.

\subsection{Bieri--Neumann--Strebel invariants}

Let $R$ be a ring, $G$ a group, and $\varphi \colon G \to \RR$ a non-trivial homomorphism. Observe that
\[
G_\varphi = \{ g \in G \mid \varphi(g) \geqslant 0\}
\]
is a monoid.

\begin{defn}[Homological finiteness properties]
	We say that a monoid $M$ is of \emph{type $\mathsf{FP}_n(R)$} if the trivial $M$-module $R$ admits a resolution $C_\bullet$ by projective $RM$-modules in which $C_i$ is finitely generated for all $i\leqslant n$.
\end{defn}

Since every group is a monoid, the definition readily applies to groups as well.

The definition above is standard; we will sporadically mention also other standard finiteness properties, like type $\mathsf{FP}(R)$ and $\mathsf{F}$. Note that $G$ is of type $\mathsf{FP}_1(R)$ \iff it is finitely generated, and if it is finitely presentable then it is of type $\mathsf{FP}_2(R)$ for every ring $R$.

\begin{defn}
	We say that $\varphi$ lies in the \emph{$n$th BNS invariant over $R$}, and write $\varphi \in \Sigma^n(G;R)$, if $G_\varphi$ is of type $\mathsf{FP}_n(R)$.  
	
	We set $\Sigma^\infty(G;R) = \bigcap_n \Sigma^n(G;R)$.  Here we are considering $\Sigma^n(G;R)$, for $n\in\NN\cup\{\infty\}$, as subsets of $H^1(G;\RR)\setminus \{0\}$.
\end{defn}
	
	The first BNS invariant $\Sigma^1(G;R) = \Sigma^1(G)$ is independent of $R$.  It was introduced by Bieri--Neumann--Strebel in \cite{BieriNeumannStrebel1987}. The higher (homological) invariants defined above were introduced by Bieri--Renz \cite{BieriRenz1988} for $R=\ZZ$. The definition for general $R$ appears for example in the work of Fisher \cite{Fisher2024}. Fisher's paper also contains the following straight-forward generalisation of the work of Bieri--Renz.
		
	\begin{thm}[{\cite[Theorem 6.5]{Fisher2024}}, {\cite[Theorem 5.1]{BieriRenz1988}}] Let $G$ be a group of type $\mathsf{FP}_n(R)$. Suppose that $\varphi \colon G \to \ZZ$ is a non-trivial homomorphism.
		The kernel $\ker \varphi$ is of type $\mathsf{FP}_n(R)$ if and only if $\{\varphi, -\varphi\} \subseteq \Sigma^n(G;R)$.
	\end{thm}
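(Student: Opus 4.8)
The plan is first to normalise the set-up: we may assume $\varphi$ is surjective (passing to $\im\varphi\cong\ZZ$ changes neither $G_{\pm\varphi}$ nor $\ker\varphi$) and fix $t\in G$ with $\varphi(t)=1$, so that $G=N\rtimes\sgen t$ with $N=\ker\varphi$. Writing $M^{+}=G_\varphi$ and $M^{-}=G_{-\varphi}$, one has $M^{+}\cap M^{-}=N$; the rings $RM^{\pm}$ are the subrings of $RG$ of elements whose $\varphi$-support is bounded below, resp. above; and $RG$ is the Ore localisation of $RM^{+}$ at $\{t^{k}\}_{k\geqslant0}$, and of $RM^{-}$ at $\{t^{-k}\}_{k\geqslant0}$. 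Hence $RG$ is flat over each of $RM^{+},RM^{-},RN$, and $RM^{\pm}$ is free over $RN$. I also record: $RM^{\pm}\otimes_{RN}R\cong R[\NN]$ as left $RM^{\pm}$-modules, with $N$ acting trivially and $t^{\pm1}$ as the shift; the short exact sequences of $RM^{\pm}$-modules $0\to R[\NN]\xrightarrow{1-t^{\pm1}}R[\NN]\to R\to0$; the isomorphism $RG\otimes_{RM^{\pm}}R\cong R$ (the trivial module: inverting $t$, which acts as a unit on $R$, does nothing); and $RG\otimes_{RN}R\cong\mathrm{Ind}_{N}^{G}R$.

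For the implication ``$\ker\varphi$ of type $\mathsf{FP}_n(R)$ $\Rightarrow$ $\{\varphi,-\varphi\}\subseteq\Sigma^n(G;R)$'' I would start from a projective $RN$-resolution $Q_\bullet\to R$ that is finitely generated in degrees $\leqslant n$, and induce it up along $RN\hookrightarrow RM^{+}$. Flatness keeps it exact and freeness keeps it finitely generated up to degree $n$, so it becomes a projective $RM^{+}$-resolution of $R[\NN]$, finitely generated up to degree $n$. Lifting $1-t$ to a chain endomorphism and passing to the mapping cone---which resolves $\coker(1-t)=R$ since $1-t$ is injective---yields a projective $RM^{+}$-resolution of the trivial module $R$ that is finitely generated up to degree $n$; that is, $\varphi\in\Sigma^n(G;R)$. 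The same argument with $t^{-1}$ in place of $t$ gives $-\varphi\in\Sigma^n(G;R)$.

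The converse is the substantial direction. First, $G$ is itself of type $\mathsf{FP}_n(R)$: induce a finitely-generated-up-to-$n$ projective $RM^{+}$-resolution of $R$ up along $RM^{+}\hookrightarrow RG$ and use $RG\otimes_{RM^{+}}R\cong R$. The real work is to promote finiteness over $RM^{+}$ and over $RM^{-}$ to finiteness over their intersection $RN$. Following Bieri--Renz, I would fix a free $RG$-resolution $F_\bullet\to R$ that is finitely generated in degrees $\leqslant n$, equip it with the $\varphi$-valuation read off from a $G$-basis (so that each $F_i$ carries a $\ZZ$-filtration by finitely generated free $RN$-submodules with $\varphi$-bounded differentials), and use the Sikorav-type reformulation in terms of Novikov coefficients: $\{\varphi,-\varphi\}\subseteq\Sigma^n(G;R)$ iff $H_{\leqslant n}(G;\widehat{RG}^{\varphi})=H_{\leqslant n}(G;\widehat{RG}^{-\varphi})=0$. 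Feeding this in through the short exact sequence of complexes $0\to F_\bullet\to\bigl(\widehat{RG}^{\varphi}\otimes_{RG}F_\bullet\bigr)\oplus\bigl(\widehat{RG}^{-\varphi}\otimes_{RG}F_\bullet\bigr)\to W\otimes_{RG}F_\bullet\to0$, with $W=\widehat{RG}^{\varphi}+\widehat{RG}^{-\varphi}$, produces contracting homotopies over $\widehat{RG}^{\pm\varphi}$ in degrees $\leqslant n$; being $\widehat{RG}^{\pm\varphi}$-linear, each moves the $\varphi$-filtration only boundedly in one direction, and patching the two along $RN=RM^{+}\cap RM^{-}$ realises $F_\bullet$ restricted to $RN$, in degrees $\leqslant n$ and up to $RN$-chain homotopy, as a finitely generated free $RN$-complex. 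Hence the trivial $RN$-module $R$ is of type $\mathsf{FP}_n(R)$, i.e.\ $\ker\varphi$ is.

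The main obstacle is exactly this patching step. Either of $\varphi\in\Sigma^n$, $-\varphi\in\Sigma^n$ alone gives only a one-sided (``half-completion'') contraction, under which $F_\bullet$ restricted to $RN$ still looks infinitely generated in each degree; it is the interaction of the two one-sided contractions across the common subring $RN$ that collapses it to a genuinely finite complex. Making this precise---tracking $\varphi$-valuations on $F_\bullet$ and on the homotopies, truncating to a bounded ``$\varphi$-slab'', and checking that everything survives restriction of scalars from $RG$ to $RN$---is the technical heart, and is exactly the content of Bieri--Renz's analysis of valuations on free resolutions, carried out here over an arbitrary coefficient ring $R$ rather than over $\ZZ$.
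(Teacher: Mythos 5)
The paper does not prove this theorem: it is quoted directly from Fisher and Bieri--Renz, so there is no internal argument to compare you against. On the merits, your forward direction is complete and correct --- writing $G=N\rtimes\sgen{t}$, inducing a finitely generated (up to degree $n$) projective $RN$-resolution of $R$ along $RN\hookrightarrow RM^{+}$ to resolve $R[\NN]$, and taking the mapping cone of a lift of $1-t$ is exactly the clean way to see $\varphi\in\Sigma^{n}(G;R)$, and the freeness/flatness and Ore-localisation facts you record are all true in this setting. Your derivation that $G$ itself is of type $\mathsf{FP}_n(R)$ (inducing up along $RM^{+}\hookrightarrow RG$ and using $RG\otimes_{RM^{+}}R\cong R$) is also correct.

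For the converse you correctly identify the strategy used in the cited sources --- Sikorav-type vanishing over the two Novikov rings $\nov{R}{G}{\pm\varphi}$, a $\varphi$-valuation on a free $RG$-resolution $F_\bullet$ that is finitely generated up to degree $n$, and the short exact sequence $0\to RG\to\nov{R}{G}{\varphi}\oplus\nov{R}{G}{-\varphi}\to W\to 0$ --- but the patching step is asserted rather than carried out, as you yourself flag. The sentence ``patching the two along $RN$ realises $F_\bullet$ restricted to $RN$, in degrees $\leqslant n$ and up to $RN$-chain homotopy, as a finitely generated free $RN$-complex'' states the desired conclusion without exhibiting the finite $RN$-complex or the homotopy equivalence; the long exact sequence of the displayed short exact sequence does not by itself produce one. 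What is actually needed (and what Bieri--Renz and Fisher supply) is a concrete truncation argument: use the two one-sided contractions, each moving the $\varphi$-valuation boundedly in a single direction, to deformation-retract the $RN$-restriction of $F_{\leqslant n}$ onto a bounded ``$\varphi$-slab'', which is a finitely generated free $RN$-complex because each $F_i$ has a finite $G$-basis and the valuation of every chain in the image of the retraction is controlled. Without this slab construction and the verification that the retraction is an $RN$-chain homotopy equivalence in degrees $\leqslant n$, the proof is a plan rather than a proof; so I would grade the forward implication as done, the converse as a correct outline with the technical core deferred to the references you are re-proving.
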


    We will often refer to a homomorphism $\varphi\colon G\to \RR$ as a \emph{character} and a homomorphism $\varphi\colon G\to \ZZ$ as an \emph{integral character}.
 
	\begin{defn}
		\label{def semi-fibred}
		A non-trivial character $\varphi \colon G \to \Z$ is \emph{$\mathsf{FP}_n(R)$-fibred} if $\ker \varphi$ is of type $\mathsf{FP}_n(R)$. An $\mathsf{FP}_1(R)$-fibred character will be also called \emph{algebraically fibred}; this last notion is independent of $R$.
		
		 Similarly, an integral character in $\Sigma^n(G;R) \cup -\Sigma^n(G;R)$ will be called \emph{$\mathsf{FP}_n(R)$-semi-fibred}, and a character in $\Sigma^1(G) \cup -\Sigma^1(G)$ will be called \emph{algebraically semi-fibred}.
		 
		 A group $G$ will be called \emph{algebraically fibred} if it admits an algebraically fibred character.
	\end{defn}

The terminology `semi-fibred' is new. It is meant to capture the idea that a character behaves like a fibred character, but its negative might not.

	 The invariant $\Sigma^1(G)$ admits a number of alternative definitions. Let us now discuss one of them.
	
	\begin{defn}
		Let $B$ be a group, let $A,C\leqslant B$, and suppose that there exists an isomorphism $\iota\colon A\to C$.  The \emph{HNN extension} $B\ast_{\iota}$ with \emph{base group} $B$ and \emph{associated subgroups} $A$ and $C$ is defined by 
		\[B\ast_{\iota} =  B\ast \langle t \rangle / \langle \! \langle \{ t^{-1}at=\iota(a) : a\in A \} \rangle \! \rangle. \]
		The HNN extension is \emph{ascending} if $C=B$ and \emph{descending} if $A=B$. If it is ascending but not descending, it is \emph{properly ascending}.
	\end{defn}
	
	\begin{prop}[{\cite{Brown1987}}]
		\label{HNN criterion}
		Let $G$ be a finitely generated group. An epimorphism $\varphi \colon G \to \Z$ lies in $\Sigma^1(G)$ if and only if there exists an isomorphism $\rho \colon G \to B\ast_\iota$ where $B$ is finitely generated, the HNN extension $B\ast_\iota$ is descending, and $\varphi$ is equal to the composition of $\rho$ with the quotient map $B\ast_\iota \to B\ast_\iota/\langle\!\langle B \rangle\!\rangle = \langle t \rangle = \Z$. 
	\end{prop}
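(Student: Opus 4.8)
The statement is Brown's criterion for membership in the first BNS invariant, and the plan is to prove it through the standard geometric reformulation: for a finite generating set $S$ of the finitely generated group $G$ one has $\varphi\in\Sigma^1(G)$ if and only if the full subgraph $\Gamma^{\ge0}$ of $\cay(G,S)$ spanned by $G_\varphi=\{g:\varphi(g)\ge0\}$ is connected, and this is insensitive to the choice of $S$. I would cite this classical equivalence with the monoid-theoretic definition recalled above to \cite{BieriNeumannStrebel1987}. Each implication then reduces to translating between reduced forms in an HNN extension and paths in $\Gamma^{\ge0}$.

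For the ``if'' direction, let $G=B\ast_\iota$ be descending, so $A=B$ and the defining relations are $t^{-1}bt=\iota(b)$ for every $b\in B$; take $S=S_B\sqcup\{t^{\pm1}\}$ for $S_B$ a finite generating set of $B$. From $bt=t\iota(b)$ and $t^{-1}b=\iota(b)t^{-1}$ one can push every $t$ to the left and every $t^{-1}$ to the right of the $B$-letters; applying this to a reduced HNN form of an arbitrary $g$ with $\varphi(g)\ge0$ yields an expression $g=c_0\,t^{p}\,\beta\,t^{-r}$ with $c_0,\beta\in B$ and $p\ge r\ge0$ (as $p-r=\varphi(g)$). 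Then the path $1\rightsquigarrow c_0$ (a word for $c_0$ in $S_B$), then $c_0\to c_0t\to\dots\to c_0t^{p}$, then $c_0t^{p}\rightsquigarrow c_0t^{p}\beta$ (a word for $\beta$ in $S_B$), then $c_0t^{p}\beta\to c_0t^{p}\beta t^{-1}\to\dots\to c_0t^{p}\beta t^{-r}=g$ visits only vertices of level $\ge0$, so $\Gamma^{\ge0}$ is connected and $\varphi\in\Sigma^1(G)$.

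For the ``only if'' direction, fix $t\in G$ with $\varphi(t)=1$ and a finite symmetric generating set $S\ni t$ with $\varphi(S_0)=\{0\}$, where $S_0:=S\setminus\{t^{\pm1}\}$ (replace each $s$ by $st^{-\varphi(s)}$). Connectivity of $\Gamma^{\ge0}$ provides, for each $u\in\ker\varphi$, an edge-path from $1$ to $u$ at level $\ge0$; consolidating its $t$-letters into blocks, $u=t^{b_0}s^{(1)}t^{b_1}\cdots s^{(k)}t^{b_k}$ with $s^{(i)}\in S_0$ and $b_i\in\ZZ$, and telescoping gives $u=\prod_{i=1}^{k}t^{\ell_i}s^{(i)}t^{-\ell_i}$ with $\ell_i=b_0+\dots+b_{i-1}\ge0$ (the level of a vertex on the path) and $\ell_{k+1}=\varphi(u)=0$. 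Hence $\ker\varphi=\langle\,t^{m}s_0t^{-m}:m\ge0,\ s_0\in S_0\,\rangle=\bigcup_{n\ge0}B_n$ with $B_n:=\langle t^{m}s_0t^{-m}:0\le m\le n\rangle$ finitely generated. As $t^{-1}s_0t\in\ker\varphi$ for each of the finitely many $s_0\in S_0$, some $B:=B_N$ contains them all; then $B$ is finitely generated, $\langle B,t\rangle=G$, and $t^{-1}Bt\subseteq B$ (on generators: $t^{-1}(t^ms_0t^{-m})t=t^{m-1}s_0t^{-m+1}\in B_{N-1}$ for $m\ge1$, and $t^{-1}s_0t\in B_N$ for $m=0$). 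Putting $C:=t^{-1}Bt$ and $\iota\colon B\to C$, $b\mapsto t^{-1}bt$, the tautological relations give an epimorphism $B\ast_\iota\onto G$; to see it is an isomorphism I would identify $B\ast_\iota$ with $\widehat B\rtimes\ZZ$, where $\widehat B=\varinjlim(B\xrightarrow{\iota}B\xrightarrow{\iota}\cdots)$ and $\ZZ$ acts by the shift, and observe that $G=\ker\varphi\rtimes\langle t\rangle$ with $\ker\varphi=\bigcup_{n\ge0}t^{n}Bt^{-n}$ realising the same colimit (the inclusion $t^{n}Bt^{-n}\hookrightarrow t^{n+1}Bt^{-n-1}$ is $\iota$ in $B$-coordinates) compatibly with the $\ZZ$-actions; under the resulting isomorphism $\varphi$ becomes the projection onto $B\ast_\iota/\langle\!\langle B\rangle\!\rangle=\langle t\rangle$. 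The HNN extension is descending since $A=B$.

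The real content lies in the ``only if'' direction, at two points. First, the passage from connectivity of $\Gamma^{\ge0}$ to the explicit generating set $\{t^{m}s_0t^{-m}:m\ge0\}$ of $\ker\varphi$ — the telescoping above, which is the one place where the hypothesis $\varphi\in\Sigma^1(G)$ is genuinely used. Second, upgrading ``$G$ is generated by $B$ and $t$ with $t^{-1}Bt=C$'' to ``$G\cong B\ast_\iota$'', i.e. checking there are no extra relations; this needs Britton's Lemma, or the colimit description of $B\ast_\iota$, to be handled with care (note that $\ker\varphi$ itself may fail to be finitely generated, which is precisely why the vertex group $B$ must be a proper subgroup of it). I would also take care to cite the geometric reformulation of $\Sigma^1$, since the definition recalled above is the monoid-theoretic one.
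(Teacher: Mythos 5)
The paper does not prove this proposition; it simply cites Brown~\cite{Brown1987} and appends a remark explaining that its ``descending'' replaces Brown's ``ascending'' because of the left/right module conventions used to define $\Sigma^1(G)$. Your argument is therefore a from-scratch reconstruction, and it is correct. You route through the geometric characterisation ``$\varphi\in\Sigma^1(G)$ iff $\Gamma^{\ge 0}$ is connected,'' which is indeed the right translation of the paper's left-module $\mathsf{FP}_1$-definition of $\Sigma^1$ (the augmentation ideal of $R G_\varphi$ is generated as a \emph{left} ideal by $\{s-1: s\in S,\ \varphi(s)\ge 0\}$ precisely when $\Gamma^{\ge0}$ is connected), and so it correctly yields \emph{descending} HNN extensions with $\varphi(t)=1$, consistent with the statement and the paper's remark. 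The ``if'' direction via the normal form $t^p\beta t^{-r}$ and the explicit path, and the ``only if'' direction via telescoping a path at level $\ge 0$ into $\prod_i t^{\ell_i}s^{(i)}t^{-\ell_i}$ with $\ell_i\ge 0$, are both sound, and you correctly isolate the two genuinely substantive points (telescoping, and ruling out extra relations).

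Two small remarks. After replacing each $s$ by $st^{-\varphi(s)}$ the generating set need no longer be symmetric; this is harmless, but one should close under inverses to keep $S=S^{-1}$. Second, the colimit identification at the end, while fine, can be shortened: every element of $B\ast_\iota$ has the form $t^p\beta t^{-r}$ with $\beta\in B$ and $p,r\ge 0$ (push $t$'s left and $t^{-1}$'s right using $A=B$); if such an element lies in the kernel of the natural epimorphism $B\ast_\iota\onto G$, then applying the projection to $\langle t\rangle$ forces $p=r$, whence $\beta=1$ in $G$, hence $\beta=1$ in $B$ since $B\le G$, so the kernel is trivial. Finally, you are right to flag the citation for the geometric reformulation: the equivalence of the $\mathsf{FP}_1$-definition with connectivity of $\Gamma^{\ge0}$ is closer to Bieri--Renz than to \cite{BieriNeumannStrebel1987} verbatim, though all the ingredients are there.
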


An observant reader will notice that Brown's original statement uses ascending, rather than descending HNN extensions. This has to do with left/right conventions for modules used in the definition of $\Sigma^1(G)$.

\subsection{Twisted Alexander polynomials}
\label{section TAP}
The following definitions are taken from Friedl and Vidussi's survey \cite{FriedlVidussi2011survey}.  However, we have taken liberty to phrase them in terms of group homology as opposed to the homology of a topological space with twisted coefficients.

Let $R$ be an integral domain (we always assume these to be commutative) and let $R[t^{\pm 1}]$ the ring of Laurent polynomials over $R$ in an indeterminate $t$. 
 Let $\alpha\colon G\twoheadrightarrow Q$ be a finite quotient of $G$.  This induces an $RG$-bimodule structure on the free $R$-module $RQ$ induced by left and right multiplication precomposed with $\alpha$ -- another way to say it is that $RQ$ is a quotient ring of $RG$, and this way becomes an $RG$-bimodule.  Let $\varphi\in H^1(G;\ZZ)$ be a cohomology class considered as a homomorphism $\varphi\colon G\to \ZZ$.  Consider $RQ[t^{\pm 1}]$ equipped with the $RG$-bimodule structure given by 
\[g.x = t^{\varphi(g)}\alpha(g)x, \quad x.g = xt^{\varphi(g)}\alpha(g) \]
 for $g \in G, x \in  RQ[t^{\pm 1}]$.
Note that $RQ[t^{\pm 1}] = R(\Z \times Q)$, and the action is multiplication precomposed with $\varphi \times \alpha$, as above.

For $n\in\ZZ$, we define the \emph{$n$th twisted (homological) Alexander module of $\varphi$ and $\alpha$} to be $H_n(G;RQ[t^{\pm 1}])$, where $RQ[t^{\pm 1}]$ has the non-trivial module structure described above. Observe that $H_n(G;RQ[t^{\pm 1}])$ also has the structure of a left 
 $R[t^{\pm 1}]$-module.
We will  denote the module by $H^{\varphi,\alpha}_{n,R}$.  If $G$ is of type $\mathsf{FP}_n(R)$, then the $n$th twisted Alexander module is a finitely generated $R[t^{\pm 1}]$-module.  Moreover, it is zero whenever $n<0$ or $n$ is greater than the cohomological dimension of $G$ over $R$.

More generally, for two groups $Z$ and $Q$, given two group homomorphisms $\alpha \colon G \to Q$ and $\varphi \colon G \to Z$, we will sometimes use $H^{\varphi,\alpha}_{n,R}$ to denote $H_n(G;R(Z \times Q))$ with the $RG$-bimodule structure on $R(Z \times Q)$ being multiplication precomposed with $\varphi \times \alpha$.

For any integral domain $S$ and any finitely generated $S$-module $M$, define the \emph{rank} of $M$ to be $\rk_{S}M=\dim_{\Frac(S)}  \Frac(S)  \otimes_S  M$, where $\Frac(S)$ denotes the classical field of fractions (that is, the  Ore localisation) of $S$.  When $S$ is additionally a UFD, the \emph{order} of $M$ is the greatest common divisor of all maximal minors in a presentation matrix of $M$ with finitely many columns.  The order of $M$ is well defined up to a unit of $S$ and depends only on the isomorphism type of $M$. We do not require there to be only finitely many minors, nor $S$ to be Noetherian. In UFDs, any number of elements has a greatest common divisor.

%Similarly, if $S$ is a UFD and $M$ is a finitely presented module, we define its \emph{order} by taking the  greatest common divisor of all maximal minors in a finite presentation matrix; again, the order is well defined  up to a unit of $S$  and depends only on the isomorphism type of $M$.

\begin{defn}
Suppose that $G$ is of type $\mathsf{FP}_n(R)$, with $R$ being a  UFD.  Let $\varphi\colon G\to \Z$ be a homomorphism and $\alpha\colon G\onto Q$ be an epimorphism with $Q$ finite.  The \emph{$n$th twisted Alexander polynomial} $\Delta_{n,R}^{\varphi,\alpha}(t)$ over $R$ with respect to $\varphi$ and $\alpha$ is defined to be the order of the $n$th twisted (homological) Alexander module of $\varphi$ and $\alpha$, treated as a left $R[t^{\pm 1}]$-module. Note that $R[t^{\pm 1}]$ is a  UFD since $R$ is.
\end{defn}

%By the observation above, the same definition can be made when $R$ is a non-Noetherian UFD, but $G$ is of type  $\mathsf{FP}_{n+1}(R)$.

\begin{example}
	Let us compute two instances of the twisted Alexander module for the Baumslag--Solitar group $G = \BS(1,2) = \langle a,t\ |\ tat^{-1}=a^2\rangle$. We take $R = \Z$. Let $\varphi \colon G \to \Z$ be the map killing $a$.
	
	In the first instance, let $Q$ be the trivial group, and so $\alpha$ is the trivial map. In this case the first twisted homological Alexander module of $\varphi$ and $\alpha$ is simply $H_1(G;\Z [t^{\pm 1}])$, which is the homology of the chain complex
	\[
	\Z [t^{\pm 1}] \to \Z [t^{\pm 1}] \oplus \Z [t^{\pm 1}] \to \Z [t^{\pm 1}]
	\]
	where the first map is the matrix $(1- 2t, 0)$ and the second is the transpose of $(0, 1-t)$. It is immediate that the homology is isomorphic as a $\Z [t^{\pm 1}]$-module to $\Z [t^{\pm 1}]/(1-2t)$, and hence the twisted Alexander polynomial is $1-2t$, which is also the untwisted Alexander polynomial, as $\alpha$ is trivial.
	
	Now let us consider a more interesting situation in which $Q = S_3$ is the permutation group of rank three, where $\alpha(a) = (1 2 3)$ and $\alpha(t) = (1 2)$. The twisted Alexander module in this case is the homology of 
		\[
	\Z Q [t^{\pm 1}] \to \Z Q [t^{\pm 1}] \oplus \Z Q [t^{\pm 1}] \to \Z Q [t^{\pm 1}]
	\]
	where the first map is the matrix $(1- (1+a)t, a-1)$ and the second is the transpose of $(1-a, 1-t)$. Since the first entry of the first matrix is of the form $1-xt$, with $x \in \Z Q$, it is easy to see that every element of the middle module can be written as $y+zt^n$, where $y$ lies in the image of the first map, $z = (z_1, z_2)$ has $z_1 \in \Z Q$, and $n \in \Z$. If we additionally require $y+zt^n$ to lie in the kernel of the second map, then we immediately see that $z_2 = 0$, since otherwise $z_2t^n(1-t)$ would not lie in $\Z Q t^n$, but $z_1t^n(a-1)$ would, and the two elements have to cancel. But then $z_1 \in \Z Q$ must have equal coefficients on all elements lying in the same $\langle a \rangle$-cosets in $S_3$.
	This tells us that the kernel of the second map is equal to the (non-direct) sum of the image of the first map, and a free $\Z [t^{\pm 1}]$-module of rank two spanned by $(1+a+a^2,0)$ and $((1+a+a^2)t,0)$. Taking the quotient of this latter module by its intersection with the image of the first map yields  $\Z [t^{\pm 1}]/(1-2t) \oplus \Z [t^{\pm 1}]/(1-2t)$, and this is precisely the twisted Alexander module.  The twisted Alexander polynomial is equal to $(1-2t)^2$.
\end{example}

Since we will be concerned with the vanishing of $\Delta_{n,R}^{\varphi,\alpha}(t)$, let us record a number of equivalent statements. From now on we drop the requirement on $R$ being a UFD.

\begin{lem}
		\label{vanishing}
	Let $R$ be an integral domain, and let $F = \Frac(R)$. Suppose that $G$ is of type $\mathsf{FP}_n(R)$.
	The following are equivalent:
	\begin{enumerate}
		\item \label{vanishing def 3} $\rk_{R[t^{\pm1}]} H^{\varphi,\alpha}_{n,R} = 0$;
		\item \label{vanishing def 2} $H^{\varphi,\alpha}_{n,R}$ is a torsion $R[t^{\pm 1}]$-module.
		\item \label{vanishing def 1} $H^{\varphi,\alpha}_{n,F}$ is a torsion $F[t^{\pm 1}]$-module.
		\item \label{vanishing def 4} $H^{\varphi,\alpha}_{n,F}$ is a finitely generated $F$-module.
	\end{enumerate}
	If additionally $R$ is a UFD, then these are equivalent to
	\begin{enumerate}[resume]
		\item \label{vanishing def 5} $\Delta_{n,R}^{\varphi,\alpha}(t) \neq 0$.
	\end{enumerate}
\end{lem}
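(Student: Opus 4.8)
The plan is to treat these as a ring-theoretic exercise about finitely generated modules over the PID-like ring $R[t^{\pm1}]$ (or its localisations), using only standard facts about torsion, rank, and base change. Let me outline the implications I would establish.

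First, the equivalence of \eqref{vanishing def 3} and \eqref{vanishing def 2} is essentially a tautology: for a finitely generated module $M$ over the integral domain $S = R[t^{\pm1}]$, one has $\rk_S M = \dim_{\Frac(S)} \Frac(S) \otimes_S M$, and this vanishes precisely when every element of $M$ is killed by some nonzero element of $S$, i.e.\ $M$ is torsion. (Here one uses that $M$ is finitely generated so that $\Frac(S)\otimes_S M = 0$ forces each generator, hence all of $M$, to be torsion.) So I would state this first and dispose of it in one or two lines.

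Next, for the passage between coefficients in $R$ and coefficients in $F = \Frac(R)$, the key input is flatness: $F$ is a flat (indeed localisation) $R$-module, and $F \otimes_R R[t^{\pm1}] = F[t^{\pm1}]$, so by the universal coefficient / flat base change isomorphism in group homology we get $H^{\varphi,\alpha}_{n,F} \cong F \otimes_R H^{\varphi,\alpha}_{n,R}$ as $F[t^{\pm1}]$-modules (no $\Tor$ correction terms appear since $F$ is flat over $R$). Now observe that $\Frac(F[t^{\pm1}]) = \Frac(R[t^{\pm1}])$, and tensoring $H^{\varphi,\alpha}_{n,R}$ up to this common field of fractions can be done in two stages (first $\otimes_R F$, then over $F[t^{\pm1}]$), so $\rk_{F[t^{\pm1}]} H^{\varphi,\alpha}_{n,F} = \rk_{R[t^{\pm1}]} H^{\varphi,\alpha}_{n,R}$. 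This immediately gives \eqref{vanishing def 3} $\Leftrightarrow$ \eqref{vanishing def 1}. For \eqref{vanishing def 1} $\Leftrightarrow$ \eqref{vanishing def 4}, I would invoke that $F[t^{\pm1}]$ is a PID, so a finitely generated module over it is torsion if and only if it is finite-dimensional over $F$ (a torsion f.g.\ module over a PID is a finite direct sum of cyclic torsion modules, each finite-dimensional over $F$; conversely an element of infinite order over $F[t^{\pm1}]$ spans an infinite-dimensional $F$-subspace). One small point to flag: this needs $H^{\varphi,\alpha}_{n,F}$ to be a \emph{finitely generated} $F[t^{\pm1}]$-module, which holds when $G$ is of type $\mathsf{FP}_n(R)$ (hence of type $\mathsf{FP}_n(F)$); I would note this hypothesis is in force throughout the subsection.

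Finally, under the additional assumption that $R$ is a UFD, so that $R[t^{\pm1}]$ is a UFD and the order $\Delta_{n,R}^{\varphi,\alpha}(t)$ is defined: the order of a finitely generated module over a Noetherian UFD is nonzero if and only if the module is torsion (the order is the gcd of the maximal minors of a presentation matrix, and this gcd is zero precisely when the presentation matrix has rank strictly less than the number of generators, i.e.\ when the module has positive rank). This yields \eqref{vanishing def 5} $\Leftrightarrow$ \eqref{vanishing def 2}. I do not expect any genuine obstacle here; the only mild subtlety is being careful that all the base-change isomorphisms are isomorphisms of $R[t^{\pm1}]$- or $F[t^{\pm1}]$-modules (not merely of abelian groups), so that talk of rank and torsion transports correctly — but this is automatic since $F \otimes_R -$ is a functor on $R[t^{\pm1}]$-modules.
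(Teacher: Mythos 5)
Your proposal is correct and follows essentially the same route as the paper's sketch: flat base change along $R \hookrightarrow F$, the identity $\Frac(R[t^{\pm 1}]) = \Frac(F[t^{\pm 1}])$, and the structure theory of finitely generated modules over the PID $F[t^{\pm 1}]$ are exactly the paper's inputs for items \eqref{vanishing def 3}--\eqref{vanishing def 4}. The one small divergence is that you give a direct Fitting-ideal argument for item \eqref{vanishing def 5}, where the paper simply cites Turaev; your argument is sound and arguably more self-contained, and you are right to flag that finite generation of $H^{\varphi,\alpha}_{n,F}$ over $F[t^{\pm1}]$ (supplied by the standing $\mathsf{FP}_n$ hypothesis in that subsection) is what is needed for the PID step.
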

\begin{proof}[Sketch proof]
	We offer only a sketch, since these equivalences are standard.
	
	Items \eqref{vanishing def 2} and \eqref{vanishing def 1} are equivalent since $F$ is a flat $R$-module. Items \eqref{vanishing def 1}, \eqref{vanishing def 4}, and \eqref{vanishing def 3} are equivalent thanks to the classification theorem for finitely generated modules over a PID, since $F[t^{\pm 1}]$ is a PID; one also needs to note that $\Frac(R[t^{\pm 1}]) = \Frac(F[t^{\pm 1}])$.
	
	The equivalence of \eqref{vanishing def 5} with the other ones is explained in {\cite[Remark 4.5, Clause 2]{Turaev2001}}.
\end{proof}

\begin{dfn}
	\label{def vanishing}
	Let $R$ be an integral domain, $\varphi \colon G \to \Z$ be a homomorphism, and $\alpha \colon G \onto Q$ be a finite quotient. We say that $\varphi$ has \emph{non-vanishing $n$th Alexander polynomial  twisted by $\alpha$} if $\rk_{R[t^{\pm1}]} H^{\varphi,\alpha}_{n,R} = 0$. If this holds for $\alpha = \tr \colon G \to \{1\}$, we say that the \emph{$n$th Alexander polynomial} does not vanish; if the statement holds for all choices of $\alpha$, we say that  $\varphi$ has \emph{non-vanishing $n$th twisted  Alexander polynomials}.
\end{dfn}

It may seem strange to define non-vanishing of an object in terms of vanishing of a different object, but indeed, by \Cref{vanishing}, if $R$ is a  UFD, then $\rk_{R[t^{\pm1}]} H^{\varphi,\alpha}_{n,R} = 0$ is equivalent to $\Delta_{n,R}^{\varphi,\alpha}(t)\neq 0$.  \cref{vanishing} also shows that in \Cref{def vanishing} we may replace $R$ by $\Frac(R)$.

\begin{lem}
	\label{untwisting}
	The $n$th Alexander polynomial of $\varphi$ twisted by $\alpha$ vanishes
	\iff 
	the $n$th (untwisted) Alexander polynomial of $\varphi\vert_{\ker \alpha}$ vanishes. Moreover, if $R$ is a UFD then the corresponding twisted Alexander polynomials are equal.
\end{lem}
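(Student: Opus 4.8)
The plan is to identify the twisted homology of $G$ with the untwisted homology of the finite-index subgroup $K = \ker\alpha$ via Shapiro's lemma, and then track how the $R[t^{\pm1}]$-module structures match up under this identification. Write $Q = G/K$ and pick a transversal for $K$ in $G$. The bimodule $RQ[t^{\pm1}] = R(\ZZ\times Q)$, with the $RG$-structure being multiplication precomposed with $\varphi\times\alpha$, is isomorphic as a left $RG$-module to the coinduced module; more simply, restricting the right $RG$-action along $K\hookrightarrow G$ and decomposing $RQ[t^{\pm1}]$ according to the $Q$-coordinate exhibits it as $\bigoplus_{q\in Q} R[t^{\pm1}]$, on which $K$ acts only through $\varphi\vert_K$ on each summand. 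This is exactly the statement that, as an $RK$-module, $RQ[t^{\pm1}] \cong RG \otimes_{RK} R[t^{\pm1}]^\varphi$ where $R[t^{\pm1}]^\varphi$ denotes $R[t^{\pm1}]$ with $K$ acting via $t^{\varphi}$. Shapiro's lemma then gives $H_n(G; RQ[t^{\pm1}]) \cong H_n(K; R[t^{\pm1}]^\varphi) = H^{\varphi\vert_K,\,\mathrm{tr}}_{n,R}$.

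The slightly delicate point — and the one I expect to be the main obstacle — is that Shapiro's lemma as usually stated is an isomorphism of abelian groups (or of modules over the ground ring), whereas here both sides carry an additional left $R[t^{\pm1}]$-module structure, and the claim about vanishing requires that the Shapiro isomorphism be $R[t^{\pm1}]$-linear. To see this, one should note that the $R[t^{\pm1}]$-action on the coefficient bimodule commutes with the full $RG$-bimodule structure (multiplication by $t$ is central in $R(\ZZ\times Q)$ relative to both the left and right $G$-actions, since $\varphi$ and $\alpha$ land in an abelian group in the first coordinate), hence it descends to an action on homology that is natural with respect to the maps used in Shapiro's lemma. Concretely: $R[t^{\pm1}]$ sits inside $R(\ZZ\times Q)$ as the image of $R[\ZZ] \to R(\ZZ\times Q)$, this is a subring of the centre of the relevant endomorphism ring, and the chain-level Shapiro map (restriction of coefficients combined with the adjunction $RG\otimes_{RK}(-)$) is a map of complexes of $R[t^{\pm1}]$-modules. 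I would spell this out just enough to convince the reader that no twisting of the $t$-variable occurs — which is the whole content, since $\varphi\vert_K$ still records the $t$-grading faithfully.

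Once the $R[t^{\pm1}]$-linear isomorphism $H^{\varphi,\alpha}_{n,R} \cong H^{\varphi\vert_K,\mathrm{tr}}_{n,R}$ is in hand, the first assertion is immediate: by \cref{vanishing}, vanishing of the $n$th Alexander polynomial twisted by $\alpha$ is the condition $\rk_{R[t^{\pm1}]} H^{\varphi,\alpha}_{n,R} = 0$, which is now literally the same as $\rk_{R[t^{\pm1}]} H^{\varphi\vert_K,\mathrm{tr}}_{n,R} = 0$, i.e.\ vanishing of the $n$th untwisted Alexander polynomial of $\varphi\vert_K$. For the "moreover" clause, assume $R$ is a UFD, so that $R[t^{\pm1}]$ is a UFD and the orders are defined. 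Here one must check that $G$ being of type $\mathsf{FP}_n(R)$ forces $K$ to be of type $\mathsf{FP}_n(R)$ as well — this holds because $K$ has finite index in $G$, so a finite-type projective $RG$-resolution of $R$ restricts to one over $RK$. Then the order of an $R[t^{\pm1}]$-module is an invariant of its isomorphism class as an $R[t^{\pm1}]$-module (it is computed from any finite presentation matrix, as recalled in the preliminaries), so the isomorphism above gives $\Delta^{\varphi,\alpha}_{n,R}(t) \doteq \Delta^{\varphi\vert_K,\mathrm{tr}}_{n,R}(t)$ up to a unit of $R[t^{\pm1}]$; asserting genuine equality then just requires pinning down the normalisation convention for the polynomial, which I would either invoke from the cited sources or note is the standard "defined up to units" equality.
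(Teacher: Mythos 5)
Your proposal is correct and follows essentially the same route as the paper: identify $RQ[t^{\pm1}]$ as the module induced from the right $R(\ker\alpha)$-module $R[t^{\pm1}]$ (with action via $\varphi\vert_{\ker\alpha}$) and invoke Shapiro's lemma. The paper leaves the $R[t^{\pm1}]$-linearity of the Shapiro isomorphism implicit, which you spell out via the centrality of $t$ in $R(\ZZ\times Q)$ -- a worthwhile point to make explicit, though not a new idea. One cosmetic slip: since coefficients in homology are taken as right modules here, the induced module should be written $R[t^{\pm1}]^\varphi\otimes_{R\ker\alpha}RG$ rather than $RG\otimes_{R\ker\alpha}R[t^{\pm1}]^\varphi$.
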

\begin{proof}
This was proved in \cite[Lemma~3.3]{FriedlVidussi2008a}, we include a proof for completeness.	
 We need to compare the  $R[t^{\pm 1}]$-modules 
	\[H_n(G;RQ[t^{\pm 1}]) \textrm{ and } H_n(\ker \alpha;R[t^{\pm 1}]).\]
	 Shapiro's lemma shows that these modules are isomorphic, since  $RQ[t^{\pm 1}]$ is isomorphic to the  induced right $RG$-module of the right $R(\ker \alpha)$-module $R[t^{\pm 1}]$.
\end{proof}

The following result is well known for $3$-manifolds and has appeared in several places \cite{KirkLivingston1999,ClayRolfson2012,GodaKitanoMorifuji2005,FriedlKim2006}; in fact, it appears to date back to work of Milnor \cite{Milnor1967}.  We include a proof in the group theoretic setting for completeness.
\begin{prop}\label{prop.easy}
Let $R$ be an
  integral domain.  Let $G$ be a group of type $\mathsf{FP}_n(R)$ and let $\varphi\colon G\to \ZZ$ be a non-trivial character.  If $\varphi$ is $\mathsf{FP}_n(R)$-fibred, then its $k$th twisted Alexander polynomials never vanish for $k\leqslant n$.
\end{prop}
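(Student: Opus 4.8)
The plan is to reduce the twisted statement to an untwisted one on a finite-index subgroup, then to compute the (untwisted) Alexander modules through the Wang exact sequence attached to $1\to\ker\varphi\to G\to\ZZ\to1$, and finally to check that the maps appearing there become isomorphisms after inverting all non-zero elements of $R[t^{\pm1}]$; by \cref{vanishing}, ``non-vanishing'' is precisely the statement that the Alexander module in question is $R[t^{\pm1}]$-torsion, so that is the target. Concretely, I would first fix a finite quotient $\alpha\colon G\onto Q$ and an integer $k\le n$ and apply \cref{untwisting}, which reduces the problem to showing that the $k$th untwisted Alexander polynomial of $\psi:=\varphi\vert_{\ker\alpha}$ is non-vanishing. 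The hypotheses transfer: $\ker\alpha$ has finite index in $G$, hence is of type $\mathsf{FP}_n(R)$; $\psi$ is non-trivial, since a finite-index subgroup cannot be contained in the infinite-index subgroup $\ker\varphi$; and $\ker\psi=\ker\alpha\cap\ker\varphi$ has finite index in $\ker\varphi$, hence is again of type $\mathsf{FP}_n(R)$. So it is enough to treat the following situation: $G$ is of type $\mathsf{FP}_n(R)$, $\psi\colon G\to\ZZ$ is non-trivial, $K:=\ker\psi$ is of type $\mathsf{FP}_n(R)$, and we must show that $H_k(G;R[t^{\pm1}])$ with the $\psi$-twisted coefficients is an $R[t^{\pm1}]$-torsion module for every $k\le n$.

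Here I would write $M=R[t^{\pm1}]$ with $g\in G$ acting by multiplication by $t^{\psi(g)}$, let $d>0$ generate $\psi(G)$, and use that $\psi$ kills $K$: then $M$ restricts to a trivial $K$-module, so flatness of $R[t^{\pm1}]$ over $R$ gives $H_q(K;M)\cong H_q(K;R)\otimes_R R[t^{\pm1}]$, and under this identification the conjugation action of a generator $\tau$ of $G/K\cong\ZZ$ is $\theta_*$ on the first tensor factor and multiplication by $t^{d}$ on the second, where $\theta_*$ is the automorphism induced on $H_q(K;R)$ by conjugation by a lift of $\tau$. Since $\cd_R(\ZZ)=1$, the Lyndon--Hochschild--Serre spectral sequence of $1\to K\to G\to\ZZ\to1$ collapses to short exact sequences
\[
0\longrightarrow\coker\big(t^{d}\theta_*-1\mid H_k(K;M)\big)\longrightarrow H_k(G;M)\longrightarrow\ker\big(t^{d}\theta_*-1\mid H_{k-1}(K;M)\big)\longrightarrow0 .
\]
As $K$ is of type $\mathsf{FP}_n(R)$, the $R$-module $V:=H_q(K;R)$ is finitely generated for each $q\le n$, so everything reduces to a single linear-algebra fact: for a finitely generated $R$-module $V$ and any $R$-linear endomorphism $\theta_*$, the endomorphism $t^{d}\theta_*-1$ of $V\otimes_R R[t^{\pm1}]$ has $R[t^{\pm1}]$-torsion kernel and cokernel.

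That fact is the only real computation, and it is brief: base-changing along the flat map $R[t^{\pm1}]\to\Frac(R[t^{\pm1}])=F(t)$, with $F=\Frac(R)$, turns $t^{d}\theta_*-1$ into the endomorphism $t^{d}\bar\theta_*-\mathrm{id}$ of the finite-dimensional $F(t)$-vector space $(V\otimes_R F)\otimes_F F(t)$; representing $\bar\theta_*$ by a matrix $A$ over $F$, the polynomial $\det(t^{d}A-I)\in F[t]$ has constant term $\det(-I)=\pm1\neq0$, hence is non-zero in the field $F(t)$, so $t^{d}A-I$ is invertible over $F(t)$ -- and, by flatness, this is exactly the statement that the kernel and cokernel of $t^{d}\theta_*-1$ over $R[t^{\pm1}]$ are torsion. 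Feeding this back into the Wang sequence with $q=k$ and with $q=k-1$ (the case $k=0$ being trivial, since $H_{-1}(K;M)=0$) presents $H_k(G;M)$ as an extension of one torsion module by another, hence torsion, so its $R[t^{\pm1}]$-rank is zero by \cref{vanishing}, as wanted. I do not expect a genuine obstacle -- the statement is elementary -- the only points that need care being the identification of the conjugation action on $H_q(K;M)$ under the base-change isomorphism, and the fact that $R$ need not be Noetherian, so that one should argue throughout with ``torsion'' (equivalently, rank zero via \cref{vanishing}) rather than with finitely generated torsion modules.
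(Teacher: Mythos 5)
Your proof is correct, but takes a genuinely different route from the paper's. The paper works directly over the subgroup $A=\ker\varphi\le G$: since $RQ[t^{\pm1}]$ is induced from $A$, a single application of Shapiro's lemma identifies $H_k(G;RQ[t^{\pm1}])$ with $H_k(A;RQ)$ as an $R$-module, and because $A$ is of type $\mathsf{FP}_n(R)$ and $Q$ is finite this is a finitely generated $R$-module for $k\le n$; one then invokes \cref{vanishing} and is done, with no spectral sequence and no explicit computation. You instead apply Shapiro in the orthogonal direction, over $\ker\alpha\le G$ (via \cref{untwisting}), and then analyse the remaining $\Z$-extension $1\to\ker\psi\to\ker\alpha\to\Z\to1$ through the Wang sequence, closing the argument with the observation that $\det(t^{d}A-I)$ has non-zero constant term --- the classical characteristic-polynomial mechanism in the spirit of Milnor. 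Both approaches use the same finite-generation input (finite-index subgroups of $\ker\varphi$ are of type $\mathsf{FP}_n(R)$). The paper's is noticeably shorter; yours is more explicit about the structure of the Alexander module and needs the extra Wang/determinant step, but every step checks out, including the care you take about $R$ not being Noetherian (arguing with torsion rather than with finitely generated torsion modules) and the reduction to $\ker\alpha$ preserving all the standing hypotheses.
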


\begin{proof}
Since $\varphi$ is $\mathsf{FP}_n(R)$-fibred, $G$ splits as a semi-direct product $A\rtimes\ZZ$ with $A = \ker \varphi$ of type $\mathsf{FP}_n(R)$. Let $m = |\Z : \im \varphi|<\infty$.
  Now, let $\alpha\colon G\twoheadrightarrow Q$ be an epimorphism of $G$ onto a finite group and let $RQ[t^{\pm 1}]$ be the right $RG$-module with action given by $\varphi$ and $\alpha$. Applying \cite[III.6.2 and III.8.2]{Brown1982} yields that 
  \[
  H_\bullet(G;RQ[t^{\pm 1}]) = H_\bullet(G;\bigoplus_m RQ[t^{\pm m}]) \cong H_\bullet(A;\bigoplus_m RQ)
  \]
   as $R$-modules.  Now, since $A$ is of type $\mathsf{FP}_n(R)$ and $Q$ is finite it follows that  for $k\leqslant n$ the $R$-module $H_\bullet(A;\bigoplus_m RQ)$ is finitely generated. Such a module cannot contain a copy of $R[t^{\pm 1}]$, and  therefore $H_k(G;RQ[t^{\pm 1}])$ is a torsion  $R[t^{\pm 1}]$-module. We are done by \cref{vanishing}.
\end{proof}

\begin{prop}
	\label{semifibred so Alex}
Let $R$ be an integral domain.  Let $G$ be a group of type $\mathsf{FP}_n(R)$, and let $\varphi\colon G\to \ZZ$ be an $\mathsf{FP}_n(R)$-semi-fibred character.
	  The $k$th twisted Alexander polynomials of ${\varphi}$ are non-zero for all $k\leqslant n$.
\end{prop}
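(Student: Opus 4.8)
The plan is to reformulate the desired non‑vanishing as a homological vanishing over a Novikov ring, and then feed in the Sikorav–Bieri description of the higher Bieri–Neumann–Strebel–Renz invariants. \emph{First, the reductions.} Replacing $\varphi$ by $-\varphi$ if necessary, we may assume $\varphi\in\Sigma^n(G;R)$: this is harmless, because the additive bijection of $RQ[t^{\pm1}]$ that inverts $t$ and fixes $RQ$ pointwise intertwines the $RG$‑module structures induced by $(\varphi,\alpha)$ and by $(-\varphi,\alpha)$, so the corresponding twisted Alexander modules agree as $R$‑modules and hence (by \cref{vanishing}) have the same $R[t^{\pm1}]$‑rank. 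By \cref{vanishing} we may also pass to $F:=\Frac(R)$; then both $G$ and $G_\varphi$ are of type $\mathsf{FP}_n(F)$. Fix a finite quotient $\alpha\colon G\onto Q$ and an index $k\leqslant n$, and let $FQ((t))$ be the ring of formal Laurent series $\sum_j x_j t^j$ with $x_j\in FQ$ and $x_j=0$ for $j$ small, viewed as an $FG$‑module via $g\cdot x=t^{\varphi(g)}\alpha(g)x$. Since $FQ((t))=FQ[t^{\pm1}]\otimes_{F[t^{\pm1}]}F((t))$, since $F((t))$ is flat over $F[t^{\pm1}]$, and since for any $F[t^{\pm1}]$‑module $M$ one has $M\otimes_{F[t^{\pm1}]}F((t))=0$ iff $M$ is torsion (as $F((t))$ is a field extension of $\Frac(F[t^{\pm1}])$), the twisted Alexander module $H_k(G;FQ[t^{\pm1}])$ is a torsion $F[t^{\pm1}]$‑module precisely when $H_k(G;FQ((t)))=0$. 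So it suffices to prove $H_k(G;FQ((t)))=0$ for all $k\leqslant n$. (Alternatively one could first use \cref{untwisting} and the finite‑index invariance of the $\Sigma$‑invariants to reduce to the case where $\alpha$ is trivial, so that $FQ((t))=F((t))$.)

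\emph{The key point} is that $FQ((t))$ is in fact a module over the Novikov completion $\widehat{FG}^\varphi$ of $FG$ in the direction of $\varphi$, i.e. the ring of formal sums $\sum_g a_g g$ whose support meets each set $\{g:\varphi(g)\leqslant c\}$ in a finite set: the ring homomorphism $FG\to FQ((t))$, $g\mapsto t^{\varphi(g)}\alpha(g)$, extends to $\widehat{FG}^\varphi$ because an element of $\widehat{FG}^\varphi$ has support on which $\varphi$ is bounded below, hence is sent to a genuine lower‑bounded Laurent series. Now choose a free resolution $P_\bullet\to F$ of the trivial $FG$‑module with $P_i$ finitely generated for $i\leqslant n$ (possible as $G$ is of type $\mathsf{FP}_n(F)$), and put $D_\bullet:=\widehat{FG}^\varphi\otimes_{FG}P_\bullet$, a complex of free $\widehat{FG}^\varphi$‑modules. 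Since the $FG$‑action on $FQ((t))$ factors through $\widehat{FG}^\varphi$, we get $H_k(G;FQ((t)))\cong H_k\bigl(FQ((t))\otimes_{\widehat{FG}^\varphi}D_\bullet\bigr)$. Moreover $H_\bullet(D_\bullet)=H_\bullet(G;\widehat{FG}^\varphi)$, and this vanishes in degrees $\leqslant n$ exactly because $\varphi\in\Sigma^n(G;F)$ — this is the Sikorav–Bieri identification of the higher Bieri–Renz invariants with the vanishing of Novikov homology (valid since $G$ is of type $\mathsf{FP}_n(F)$).

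\emph{It remains to transport the vanishing across the tensor product.} As $D_\bullet$ is a complex of projective $\widehat{FG}^\varphi$‑modules that is exact in degrees $\leqslant n$, the complex $0\to Z_n\to D_n\to\dots\to D_0\to 0$ with $Z_n:=\ker(D_n\to D_{n-1})$ is a bounded exact complex whose terms in degrees $0,\dots,n$ are projective, hence it is split exact; in particular $Z_n$ is a projective direct summand of $D_n$, while $D_{n+1}$ maps onto $Z_n$ by exactness of $D_\bullet$ at degree $n$. Applying $FQ((t))\otimes_{\widehat{FG}^\varphi}(-)$ preserves the split exactness of the truncated complex and the surjection onto $Z_n$, so $FQ((t))\otimes_{\widehat{FG}^\varphi}D_\bullet$ is exact in degrees $\leqslant n$; that is, $H_k(G;FQ((t)))=0$ for $k\leqslant n$, as required. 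The only genuine obstacle is the Novikov bookkeeping: one must orient the completion so that $FQ((t))$ really becomes a $\widehat{FG}^\varphi$‑module and so that the Novikov‑homology criterion is applied to $\Sigma^n$ rather than to $-\Sigma^n$; with that pinned down the argument is formal, and it contains \cref{prop.easy} as the special case $\{\varphi,-\varphi\}\subseteq\Sigma^n$.
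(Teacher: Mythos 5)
Your proof is correct, and it follows the same broad strategy as the paper's: reduce to the case $\varphi\in\Sigma^n(G;\Frac(R))$, invoke Fisher's version of Sikorav's theorem to get acyclicity over the Novikov ring, and then push that acyclicity through to the Laurent-series field via flatness. The difference is in the middle step. The paper divides by $K=\ker\alpha\cap\ker\varphi$ to obtain a ring map $\nov R G \varphi\to\nov R{(G/K)}\psi$, applies it to the chain contraction, decomposes $\nov R{(G/K)}\psi$ as $\bigoplus_Q \nov R{(\ker\alpha/K)}\psi$, passes to the subgroup $\ker\alpha$, and then invokes \cref{untwisting} (Shapiro's lemma) to come back to twisted coefficients for $G$. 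You instead observe directly that the twisted coefficient module $FQ((t))$ is already a right $\widehat{FG}^\varphi$-module via $g\mapsto t^{\varphi(g)}\alpha(g)$, which lets you tensor the truncated Novikov-acyclic complex over $\widehat{FG}^\varphi$ and conclude in one step, without ever changing the group. This is a genuine streamlining: it avoids the untwisting lemma and the detour through $\ker\alpha$, at the cost of having to note that the completion map factors through $\widehat{FG}^\varphi$. Your use of split-exactness of the truncated projective complex in place of the paper's explicit partial chain contractions is cosmetic, as the two are equivalent for complexes of projectives. You correctly flag that the only place requiring care is the orientation of the Novikov completion relative to $\Sigma^n$ (the paper acknowledges the same left/right convention subtlety in its remark after \cref{HNN criterion}), and your bookkeeping is consistent with the paper's: both Novikov rings are completed so that $\varphi$-values on the support are bounded below, matching the lower-bounded Laurent series $F((t))$.
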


Note that the case $n=1$ is implied by \cite[Lemma~4.1]{FriedlVidussi2016}; our method of proof is quite distinct.

\begin{proof}
	Since $G$ is of type $\mathsf{FP}_n(R)$, we find a projective resolution $C_\bullet$ of the trivial $G$-module $R$ with $C_k$ a finitely generated $RG$-module for every $k \leqslant n$. We replace $\varphi$ by $-\varphi$ if needed, and assume that $\varphi \in \Sigma^n(G;R)$; note that this replacement does not affect the vanishing of twisted Alexander polynomials.

    The \emph{Novikov ring} $\nov R G \varphi$ is the ring of twisted Laurent power series with coefficients in $R(\ker \varphi)$ and with variable $t \in G$ with $\varphi(t)=1$, where the twisting is given by the conjugation action of $t$ on $\ker \varphi$; multiplication in $\nov R G \varphi$ induces a right $RG$-module structure on $\nov R G \varphi$.
	
	Using Fisher's version of Sikorav's theorem \cite[Theorem 5.3]{Fisher2024}, we find a partial chain contraction for $C_\bullet$ over the Novikov ring  $\nov R G \varphi$ in the following sense: Denote the differentials of $C_\bullet $ by $\partial_i \colon C_i \to C_{i-1}$. We find $\nov R G \varphi$-module morphisms \[A_i \colon \nov R G \varphi \otimes_{R G}C_i \to \nov R G \varphi \otimes_{R G} C_{i+1}\]
	 such that for every $i \leqslant n$ we have $A_{i-1} \partial'_{i} + \partial'_{i+1} A_{i} = \id$ where $\partial'_i = \id_{\nov R G \varphi} \otimes_{R G} \partial_i$, and $A_{-1} = 0, \partial'_{-1}=0$. 
	
	Now, let $\alpha \colon G \onto Q$ be an epimorphism with $Q$ finite. Dividing $G$ by the normal subgroup $K = \ker \alpha \cap \ker \varphi$ induces a ring morphism
	\[
	\beta \colon \nov R G \varphi \to \nov R {(G/K)} {\psi} 
	\]
	where $\psi \colon G/K \to \mathbb Z$ is induced by $\varphi$. 
	
	To compute the homology $H_i(G;\nov R {(G/K)} {\psi} )$, we need to tensor the chain complex $C_\bullet$ with $\nov R {(G/K)} {\psi}$. This has the same effect as applying the homomorphism $\beta$ to the free $\nov R {G} {\varphi}$-modules and differentials constituting the complex  $\nov R G \varphi \otimes_{R G}C_\bullet$. This implies that 	
	applying $\beta$ to the entries of the matrices $A_i$ gives us another set of chain contractions with the same properties as above, and hence
	\[
	H_i(G;\nov R {(G/K)} {\psi} ) = 0
	\]
	for all $i \leqslant n$.
	
	The ring $\nov R {(G/K)} {\psi}$ is isomorphic to $\bigoplus_Q \nov R {(\ker \alpha /K)} {\psi}$ as an $R (\ker \alpha /K)$-module, and hence also as an $R (\ker \alpha)$-module, and so
	\[
	H_i\big( \ker \alpha ; \nov R {(\ker \alpha /K)} {\psi} \big) = 0
	\]  
	for all $i \leqslant n$.
	Arguing with chain contractions as before, we see that
		\[
	H_i\big(\ker \alpha ; \nov {\Frac(R)} {(\ker \alpha /K)} {\psi}\big) = 0
	\]  
	for all $i \leqslant n$.
	
	 Now, $\ker \alpha /K \cong \mathbb Z$, and therefore $\nov {\Frac(R)} {(\ker \alpha /K)} {\psi})$ is the field of Laurent power series in a single variable $t$ and coefficients in $\Frac(R)$, where $t \in \ker \alpha$ is mapped by $\psi$ to a generator of $\mathbb Z$. This field contains the field  $R(t)$ of rational functions in a single variable and coefficients in $R$ in the obvious way. Since $R(t)$ is a right $R  (\ker \alpha)$-submodule of  $\nov {\Frac(R)} {(\ker \alpha /K)} {\psi})$, and 	since  $\nov {\Frac(R)} {(\ker \alpha /K)} {\psi})$ is a flat $R(t)$-module as both are skew-fields, we conclude that
		\[
	0 = H_i(\ker \alpha ; R(t)).
	\]  
Now, using flatness of localisations, we obtain
	\[
	 H_i(\ker \alpha ; R(t))  = H_i(\ker \alpha ; R[t^{\pm 1}]) \otimes_{R[t^{\pm 1}]} R(t)
	 \]
	and therefore $H_i(\ker \alpha ; R[t^{\pm 1}])$ is a torsion $R[t^{\pm 1}]$-module. We are now done thanks to \cref{untwisting,vanishing}.
\end{proof}

\begin{example}
	\label{bs example}
    The Baumslag--Solitar group \[\BS(1,n) = \langle a,t\ |\ tat^{-1}=a^n\rangle\]
     has $H^1(\BS(1,n);\RR)\cong \RR$ with basis given by the character 
     \[\varphi\colon \BS(1,n)\twoheadrightarrow\langle t\rangle\cong\ZZ\]
      killing $a$.  The BNS invariant $\Sigma^1(\BS(1,n))$ consists only of the ray $\{ \lambda \varphi \mid \lambda \in (0,\infty)\}$.  It follows that for every  integral domain  $R$  and every finite quotient $\alpha\colon\BS(1,n)\twoheadrightarrow Q$, the twisted Alexander polynomials never vanish. (In fact, the polynomials can be computed by hand rather easily.) Note that $\BS(1,n)$ splits as $\ZZ[1/n]\rtimes\ZZ$, where $\ZZ$ acts as multiplication by $n$, so $\ker(\varphi)$ is not finitely generated.
\end{example}

\section{TAP groups}

\subsection{The definition}

\begin{defn}
	\label{TAP def}
	Let $R$ be an integral domain.  We say that a group $G$ of type $\mathsf{FP}_n(R)$ is in the class $\TAP_n(R)$ if for every non-trivial character $\varphi\in H^1(G;\ZZ)$ the following property holds:
	\begin{enumerate}
		
		\item[] \begin{tabular}{p{0.85\textwidth}}\emph{$\varphi$ is $\mathsf{FP}_n(R)$-semi-fibred if and only if for each $i \leqslant n$ its twisted $i$th Alexander polynomials never vanish.}\end{tabular}
		
	\end{enumerate}
	We allow $n= \infty$ in the above definition.
	
	The definition is best motivated and explained by the following slogan: \emph{``A group is in $\TAP_n(R)$ if and only if twisted Alexander polynomials detect algebraic semi-fibring over $R$ up to dimension $n$''.}
\end{defn}

Note that in view of \cref{bs example} it is more natural to use semi-fibring rather than fibring in the definition above.  Indeed, vanishing of twisted Alexander polynomials alone cannot distinguish between fibring and semi-fibring.

\begin{example}
	\cref{Friedl Vidussi general} by Friedl--Vidussi shows that fundamental groups of compact, orientable, connected $3$-manifolds with empty or toroidal boundary are in $\TAP_1(R)$.  In fact they are in $\TAP_\infty(R)$: Indeed, the first BNS invariant of a compact $3$-manifold group $G$ is symmetric \cite[Corollary F]{BieriNeumannStrebel1987}, that is $\Sigma^1(G)=-\Sigma^1(G)$,  and a finitely generated subgroup of a $3$-manifold group is of type $\mathsf{F}_\infty$ by Scott's compact core theorem \cite{Scott1973a}.  Hence, such a subgroup is type $\mathsf{FP}_\infty(R)$ over every $R$. 
\end{example}	

\begin{example}
	A non-example is given by $G=S\wr \ZZ$ where $S$ is an 
	infinite simple group.  Note that such a group has an obvious map $\varphi\colon G\twoheadrightarrow\ZZ$ and this map is a basis for $H^1(G;\RR)\cong \RR$. The group $G$ admits an automorphism that acts on $H^1(G;\RR)$ as minus the identity, and hence the BNS invariants of $G$ must be symmetric.	
	Therefore $\Sigma^1(G;R)$ is empty since $\ker\varphi=\bigoplus_\ZZ S$ is not finitely generated.
	Now, every finite quotient of $G$ is cyclic and the corresponding kernel is isomorphic to $S^n \wr \ZZ$ for some $n$; the Alexander polynomial of such a group is equal to $1$, since the relevant $R$-module is $H_1(S^n \wr \ZZ; R[t^{\pm 1}]) \cong H_1(\bigoplus_{\Z} S^n; R) = 0$. This shows that $G$ is not in $\TAP_1(R)$ for any $R$.
\end{example}

\begin{example}
	Another non-example is provided by every group that admits a character that is $\mathsf{FP}_2(\QQ)$-semi-fibred without being $\mathsf{FP}_2(\Z)$-semi-fibred. Such a group cannot be in $\TAP_2(\Z)$, since if it were then the character would have non-vanishing twisted second Alexander polynomials over $\QQ$ by \cref{semifibred so Alex}, and hence over $\Z$ by \cref{vanishing}, and then $\TAP_2(\Z)$ would show that the character is $\mathsf{FP}_2(\Z)$-semi-fibred. An explicit example of a group satisfying the requirement is every RAAG based on a triangulation of the real projective plane; the character will then be the Bestvina--Brady character \cite{BestvinaBrady1997}.
\end{example}

We will be primarily interested in profinite aspects of TAP groups, but the property has also other uses.

Italiano--Martelli--Migliorini in \cite{ItalianoMartelliMigliorini2020} introduced a finite-volume hyperbolic $7$-manifold whose fundamental group maps onto $\Z$ with finitely presented kernel. Fisher \cite{Fisher2022} showed that by passing to a suitable finite cover, one obtains a finite-volume hyperbolic $7$-manifold $M$ with $G = \pi_1(M)$ and an epimorphism $\varphi \colon G \to \Z$ with kernel that is finitely presented and of type $\mathsf{FP}(\QQ)$.

Twisted Alexander polynomials could potentially be used to show that the $7$-manifold fibres over the circle:  Suppose that $G$ lies in $\TAP_7(\Z)$ and that
\[\Sigma^7(G;\Z) = -\Sigma^7(G;\Z).\]
 Since $\varphi \in \Sigma^7(\pi_1(M);\QQ)$, we see that the twisted Alexander polynomials of $M$ over $\QQ$ never vanish in dimensions $1$ to $7$. This means that the polynomials over $\Z$ never vanish either, and since $G$ is in $\TAP_7(\Z)$ we conclude that $\varphi \in \Sigma^7(\pi_1(M);\Z)$. Since the BNS invariant is also assumed to be symmetric, $\ker \varphi$ is finitely presented and of type $\mathsf{FP}_7(\Z)$, and hence is of type $\mathsf F$. If one now had a version of Farrell's theorem \cite{Farrell1972} for manifolds with boundary, one could conclude that $M$ fibres over the circle.

\subsection{Almost finitely presented LERF groups are \texorpdfstring{$\mathsf{TAP}_1(R)$}{TAP\textoneinferior (R)}}

Now that we have defined TAP, let us introduce the class of groups whose TAPness we want to establish.

\begin{defn}
	Let $G$ be a group. A subgroup $A \leqslant G$ is \emph{separable} if for every $g \in G \smallsetminus A$ there exists an epimorphism $\alpha \colon G \twoheadrightarrow Q$ with $Q$ finite such that
	\[
	\alpha(g) \not\in \alpha(A).
	\]
	
	A group $G$ is \emph{LERF} (or \emph{locally extended residually finite}, or \emph{subgroup separable}) if every finitely generated subgroup is separable.
\end{defn}

We will need some standard terminology related to graph-of-groups decompositions.

\begin{dfn}
	We say that a group $G$ \emph{splits over a subgroup $A$} if $G$ decomposes as a reduced graph of groups with a single edge and edge group $A$. Recall that a graph of groups is \emph{reduced} if every edge both of whose attaching maps are isomorphisms is a loop.
\end{dfn}

We are ready to state our main technical tool. The HHN extension case is a variation on the proofs from \cite{FriedlVidussi2008}.

\begin{prop}
	\label{graph of groups friedl vidussi}
	Let $G$ be a finitely generated group that splits over a separable subgroup. Let $\varphi \colon G \to \Z$ be a non-zero character that vanishes on the edge group. If for some  integral domain  $R$ the first twisted Alexander polynomials never vanish, then the splitting has only one vertex and $\varphi$ is algebraically fibred with kernel equal to the edge group.
\end{prop}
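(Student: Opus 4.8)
The plan is to exploit the graph-of-groups structure directly together with the algebraic characterisation of $\Sigma^1$ via descending HNN extensions (\cref{HNN criterion}) and the (un)twisting formula (\cref{untwisting}), treating the HNN case and the amalgamated-product case separately. Since $\varphi$ is non-zero and vanishes on the edge group $A$, the splitting of $G$ over $A$ must be an HNN extension $G = B\ast_\iota$ (if it were an amalgam $G = B_1 \ast_A B_2$, then $\varphi$ would be determined by its restrictions to $B_1$ and $B_2$, which agree on $A$; I would argue that the non-vanishing of the first twisted Alexander polynomial forces a contradiction unless the graph already has the asserted shape — more precisely, in the amalgam case $\varphi$ factors through a free product, and one shows the first Alexander module over a suitable finite quotient contains a free $R[t^{\pm 1}]$-submodule, so the polynomial vanishes). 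So the substantive case is $G = B\ast_\iota$ with stable letter $t$ and $\varphi(t)\neq 0$, $\varphi|_A = 0$; after rescaling we may take $\varphi(t)=1$ and $\varphi|_B = 0$, so that $\varphi$ is (up to the isomorphism $\rho$) the retraction $G \to G/\langle\!\langle B\rangle\!\rangle = \langle t\rangle$.

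The heart of the argument, following the strategy of \cite{FriedlVidussi2008}, is to use separability of $A$ in $B$ to find a cofinal system of finite quotients $\alpha\colon G \onto Q$ which "unfold" the HNN extension enough to read off finite generation of the kernel. Concretely, I would first reduce via \cref{untwisting} to the untwisted Alexander module of $\varphi|_{\ker\alpha}$, and note that $\ker\alpha$ itself splits as a graph of groups (induced by the Bass--Serre tree of $G = B\ast_\iota$), whose underlying graph is a finite graph covering the circle; since $\varphi|_A = 0$, the map $\varphi|_{\ker\alpha}$ is the character reading off the "winding number" around this circle. Then $H_1(\ker\alpha; R[t^{\pm 1}])$ is computed by the Mayer--Vietoris / Bass--Serre chain complex, and its being $R[t^{\pm 1}]$-torsion (equivalently, the Alexander polynomial not vanishing, by \cref{vanishing}) translates, via the standard long exact sequence for the HNN extension, into the statement that the two maps $R[t^{\pm 1}] \otimes_{RA} (\text{something}) \rightrightarrows R[t^{\pm 1}] \otimes_{RB}(\text{something})$ induced by the inclusions of the two copies of $A$ become isomorphisms after tensoring with $\Frac(R[t^{\pm 1}])$ — i.e. the inclusion $A \hookrightarrow B$ is "$H_0$-surjective with finite cokernel" in an appropriate twisted sense for every finite quotient.

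The separability of $A$ in $G$ is then what promotes this homological statement to the geometric conclusion: by choosing $\alpha$ to separate a large finite ball of $B$ from $A$, non-vanishing of the Alexander polynomial over that quotient forces $[B : A] < \infty$ on the level of the relevant quotient groups in a way that, passing to the limit over all finite quotients, shows $A = B$ (so the HNN extension is ascending) and simultaneously that it is descending — hence an isomorphism $A \to B$, the graph has one vertex, and $G = B \rtimes \Z = A \rtimes \Z$ with $A = \ker\varphi$ finitely generated, which is exactly algebraic fibring with kernel the edge group. I expect the main obstacle to be making the "separability upgrades homology to geometry" step precise: one must control the Alexander module over infinitely many finite quotients simultaneously and extract from the family of non-vanishing statements the single algebraic conclusion that the edge inclusion is an isomorphism. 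This is where the argument of \cite{FriedlVidussi2008} is genuinely used, and where the hypothesis that $A$ is separable (not merely that $G$ is residually finite) is essential — a residual-finiteness-only argument would only see finitely much of $B$ at a time.
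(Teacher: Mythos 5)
Your general framework --- Mayer--Vietoris for the splitting, plus separability of the edge group to detect the geometry --- matches the paper's, but there is a genuine gap at exactly the step you flag as "the heart of the argument," and an earlier assertion that you do not justify.

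In the HNN case you write "after rescaling we may take $\varphi(t)=1$ and $\varphi|_B=0$," but vanishing of $\varphi$ on the edge group $A$ does \emph{not} give vanishing on the vertex group $B$; this has to be proved. The paper extracts it from the untwisted case ($\alpha=\tr$): since $H_1(G;R[t^{\pm1}])$ and $H_0(G;R[t^{\pm1}])$ are $R[t^{\pm1}]$-torsion, the Mayer--Vietoris sequence forces $H_0(B;R[t^{\pm1}])$ to contain a copy of $R[t^{\pm1}]$, and this is impossible when $\varphi|_B\neq0$ (then $(R[t^{\pm1}])_B$ is torsion). Without this step you have no way to identify $\ker\varphi$ with $B$, which is the whole content of the conclusion. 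The amalgam case has the same issue and you dismiss it too briskly: one must first show (again via the untwisted polynomial) that $\varphi$ vanishes on exactly one of the two vertex groups, and only then bring in separability; the slogan "$\varphi$ factors through a free product" is not accurate in general.

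Second, your mechanism for "separability upgrades homology to geometry" is different from --- and vaguer than --- what actually closes the argument. You envisage controlling Alexander modules over a cofinal family of quotients and "passing to the limit" to conclude $A=B$. No limiting argument is needed or supplied in the paper: once $\varphi|_B=0$ is known, one uses separability to produce a \emph{single} finite quotient $\alpha\colon G\onto Q$ with $\alpha(A)\lneq\alpha(B)$, tensors the Mayer--Vietoris sequence with $F(t)=\Frac(R)(t)$ over $R[t^{\pm1}]$, and reads off $\dim_{F(t)}F(t)\otimes_R(RQ)_A=\dim_{F(t)}F(t)\otimes_R(RQ)_B$, i.e.\ $|Q:\alpha(A)|=|Q:\alpha(B)|$, contradicting the choice of $\alpha$. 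Thus the "properly ascending or descending" subcase is ruled out outright, and the only surviving possibility is that both edge maps are isomorphisms, which gives the statement directly. Your version does not actually reach the contradiction, and the claim that one shows "ascending \emph{and} descending" via a limit over finite quotients is not a completed argument.
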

\begin{proof} 
		We need to consider two cases, depending on whether the splitting is an HNN extension or an amalgamated free product.
	
		Suppose first that $G$ splits as an HNN extension (in fact, this is always the case; we will prove this by contradiction later on). If both edge maps are isomorphisms, then the edge group is a normal subgroup, and quotienting by it yields $\Z$.    Hence, $\varphi$ is algebraically fibred with kernel equal to the edge group, as claimed. 
		Suppose now that at least one of the attaching maps is not a surjection. Let $A$ denote the image of this map, and let $B$ denote the vertex group.
	
	Let $\alpha \colon G \onto Q$ be an epimorphism with finite image.
	 Consider the Mayer--Vietoris sequence for an HNN-extension (see for instance \cite[Chapter~VII.9]{Brown1982}) with non-trivial coefficients $R Q[t^{\pm 1}]$ as in \cref{section TAP}, where the action of $A$ and $B$ on the module is inherited from $G$.  The sequence takes the following form:
	\begin{equation*}\label{eqn.MayerVietorisSequenceHNN} \begin{tikzcd}
	{}  & {\cdots} \arrow[r]                        & {H_1(G; R Q[t^{\pm 1}])} \arrow[lld] &   \\
	{H_0(A;RQ[t^{\pm 1}])} \arrow[r] & {H_0(B;R Q[t^{\pm 1}])} \arrow[r] & {H_0(G; R Q[t^{\pm 1}])} \arrow[lld] \\
	0
	\end{tikzcd}\end{equation*}
	Since $A \leqslant \ker \varphi$, we have a right $A$-module isomorphism 
	\[
	R Q[t^{\pm 1}] =  R[t^{\pm 1}] \otimes_R R Q
	\]
	where the action of $g \in A$ on $R[t^{\pm 1}] \otimes_R R Q$ is the diagonal action given by right-multiplication by $\alpha(g)$ on $R Q$ and the trivial action of $R[t^{\pm 1}]$.
	We also have an $R$-module isomorphism 
	\[H_0(A; R[t^{\pm 1}] \otimes_R R Q)\cong R[t^{\pm 1}] \otimes_R (R Q)_A\]
	 by the definition of zeroth homology, where $(-)_A$ denotes $A$-coinvariants.  
	
 By assumption, $H_1(G;R Q[t^{\pm 1}])$ is $R[t^{\pm 1}]$-torsion and it is clear that 
 \[H_0(G;R Q[t^{\pm 1}])\]
  is $R[t^{\pm 1}]$-torsion (see for instance \cite[Lemma~4.4]{FriedlVidussi2008}). Applying these observations in the trivial case $\alpha = \tr\colon G\to \{1\}, Q = \{1\}$, we see that $H_0(B;R[t^{\pm 1}])$ must contain a copy of 
 $R[t^{\pm 1}] \otimes_R R_A = R[t^{\pm 1}]$.
  If $\varphi\vert_B \neq 0$, then it is immediate that $H_0(B;R[t^{\pm 1}]) = (R[t^{\pm 1}])_B$ is a torsion  $R[t^{\pm 1}]$-module, yielding a contradiction. We conclude that $\varphi\vert_B = 0$, and hence we have
    $H_0(B;R Q[t^{\pm 1}])\cong R[t^{\pm 1}] \otimes_R (R Q)_B$ for all $\alpha$ and $Q$.
	
	Using the fact that $A$ is separable, we produce an 
	epimorphism $\alpha \colon G \twoheadrightarrow Q$ with finite image such that $\alpha(A)$ is a proper subgroup of $\alpha(B)$.
		Let $F=\Frac(R)$. 
	Note that $F(t)$, the field of rational functions, is a flat $R[t^{\pm1}]$-module.
	Tensoring the Mayer--Vietoris sequence above (with this choice of $\alpha$) 
	with $F(t)$ over $R[t^{\pm 1}]$ we see that
	\[
	\dim_{F(t)}  F(t) \otimes_R (R Q)_A=\dim_{F(t)}  F(t) \otimes_R (R Q)_B.
	\]
	Observe that $(R Q)_A$ is a free right $R$-module of rank $\vert Q : \alpha(A)\vert$, and similarly for $(R Q)_B$. The dimensions above pick up exactly the $R$-rank, and so we may conclude that
	\[\vert Q : \alpha(A)\vert=\vert Q : \alpha(B)\vert,\]
	contradicting $|\alpha(A)|<|\alpha(B)|$. 
	
	\smallskip

	Suppose now $G$ splits as an amalgamated free product, we will show now this is not the case, again by contradiction. Now, the edge group $A$ must be a proper subgroup of the vertex groups $B$ and $B'$, since otherwise the graph of groups would not be reduced. 
	
	We now consider the Mayer--Vietoris sequence for a free product with amalgamation:
	\begin{equation*}
	 \begin{tikzcd}[column sep=0.35cm]
	{}  & {\cdots} \arrow[r]                        & {H_1(G; R Q[t^{\pm 1}])} \arrow[lld] &   \\
	{H_0(A;RQ[t^{\pm 1}])} \arrow[r] & {H_0(B;R Q[t^{\pm 1}])\oplus H_0(B';R Q[t^{\pm 1}])} \arrow[r] & {H_0(G; R Q[t^{\pm 1}])} \arrow[lld] \\
	0
	\end{tikzcd}
	\end{equation*}
	Arguing as before with $\alpha=\tr$, we first see that $\varphi$ must vanish on precisely one of the vertex groups, say $B$ -- it cannot vanish on both since $\varphi \neq 0$. As before, we produce $\alpha \colon G \twoheadrightarrow Q$ such that $\alpha(A)<\alpha(B)$. After tensoring with $F(t)$ over $R[t^{\pm 1}]$ we obtain an isomorphism between $F(t) \otimes_{R[t^{\pm1}]} H_0(A;RQ[t^{\pm 1}])$ and 
	\[
	\big( H_0(B;R Q[t^{\pm 1}]) \otimes_{R[t^{\pm1}]} F(t)\big) \oplus \big( H_0(B';R Q[t^{\pm 1}]) \otimes_{R[t^{\pm1}]} F(t) \big).
	\]
	Since $\varphi\vert_{B'}$ is non-trivial, the  $R[t^{\pm1}]$-module $H_0(B';R Q[t^{\pm 1}])$ is torsion as before, and hence 
	\[ F(t) \otimes_{R[t^{\pm1}]}H_0(B';R Q[t^{\pm 1}]) =0.\]
Using dimensions over $F(t)$ we conclude that
	 $|\alpha(A)|=|\alpha(B)|$, as before. This is a contradiction. 
\end{proof}

We are now ready for our first main theorem.

\begin{thm}\label{prop.NonZeroAlexImpliesFibring}\label{LERF implies TAP}
 If $G$ is a LERF group of type $\mathsf{FP}_2(S)$ for some commutative ring $S$, then $G$ is in $\TAP_1(R)$ for every integral domain $R$.
\end{thm}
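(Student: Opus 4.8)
The plan is to verify, for an arbitrary non-trivial character $\varphi\colon G\to\ZZ$, the two implications making up the definition of $\TAP_1(R)$. Since $G$ is of type $\mathsf{FP}_2(S)$ it is in particular finitely generated, hence of type $\mathsf{FP}_1(R)$ for every integral domain $R$, so the assertion $G\in\TAP_1(R)$ is meaningful. Replacing $\varphi$ by a primitive multiple changes neither its kernel nor---up to a finite extension of Laurent-polynomial rings---the torsion-ness of the relevant homology, so we may assume $\varphi$ is surjective. Finally, as already observed inside the proof of \cref{graph of groups friedl vidussi}, the module $H_0(G;RQ[t^{\pm 1}])$ is always $R[t^{\pm 1}]$-torsion, so the $0$th twisted Alexander polynomials never vanish; consequently the statement reduces to showing that $\varphi$ is algebraically semi-fibred if and only if its first twisted Alexander polynomials over $R$ are non-zero.

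The implication that algebraic semi-fibring forces non-vanishing is nothing but \cref{semifibred so Alex} with $n=1$. For the converse, assume that all first twisted Alexander polynomials of $\varphi$ over $R$ are non-zero and suppose, for a contradiction, that $\varphi\notin\Sigma^1(G)$ and $-\varphi\notin\Sigma^1(G)$. This is the point at which the hypothesis $\mathsf{FP}_2(S)$ is used: by the splitting theorem of Bieri--Strebel, a group of type $\mathsf{FP}_2$ carrying a character outside $\Sigma^1(G)\cup -\Sigma^1(G)$ admits a reduced graph-of-groups decomposition with a single edge whose edge group $A$ is finitely generated and contained in $\ker\varphi$. As $G$ is LERF and $A$ is finitely generated, $A$ is separable in $G$, so we are exactly in the setting of \cref{graph of groups friedl vidussi}: the group $G$ splits over the separable subgroup $A$, the character $\varphi$ is non-zero and vanishes on $A$, and its first twisted Alexander polynomials over $R$ do not vanish. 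That proposition then forces the decomposition to have a single vertex and $\varphi$ to be algebraically fibred with $\ker\varphi=A$; in particular $\ker\varphi$ is finitely generated. By the Bieri--Renz theorem in the form of \cite[Theorem 6.5]{Fisher2021}, this yields $\{\varphi,-\varphi\}\subseteq\Sigma^1(G;R)=\Sigma^1(G)$, contradicting our assumption. Hence $\varphi$ is algebraically semi-fibred, as required.

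The one genuinely substantive input---and the step I expect to be the main obstacle---is invoking the Bieri--Strebel splitting theorem in the right form: one must know that type $\mathsf{FP}_2$ over a single (nonzero) commutative ring already produces an honest amalgam or HNN splitting over a \emph{finitely generated} subgroup, not merely one that is finitely generated in some homological sense, and that this splitting can be arranged transverse to $\varphi$, i.e.\ with edge group inside $\ker\varphi$, so that \cref{graph of groups friedl vidussi} applies without modification. Everything else---the reduction to surjective $\varphi$, the vacuous non-vanishing of the $0$th polynomial, deducing separability of the edge group from the LERF property, and the final appeal to Bieri--Renz---is routine bookkeeping or direct citation.
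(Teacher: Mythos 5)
Your proof is correct and follows the same line as the paper: apply Bieri--Strebel to obtain an HNN splitting over finitely generated subgroups with base (hence edge group) inside $\ker\varphi$, use LERF to get separability, and conclude via Proposition~\ref{graph of groups friedl vidussi}. The paper applies that proposition directly to deduce $\varphi$ is algebraically fibred rather than running a proof by contradiction; note also that Bieri--Strebel's theorem applies to \emph{every} non-trivial character on an $\mathsf{FP}_2$ group and does not require $\varphi\notin\Sigma^1(G)\cup-\Sigma^1(G)$, so your contradiction hypothesis is superfluous.
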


\begin{proof}
Fix an arbitrary integral domain $R$.  Let $\varphi\colon G\to \ZZ$ be a non-trivial character.  We aim to show that $\varphi$ is algebraically fibred if and only if for every epimorphism onto a finite group $\alpha\colon G\twoheadrightarrow Q$ the corresponding twisted Alexander polynomial does not vanish.  The `only if' direction is given by \cref{semifibred so Alex}.  For the other direction suppose that the twisted Alexander polynomials of $\varphi$ are non-zero.

Since $G$ is of type $\mathsf{FP}_2(S)$, by \cite[Theorem~A]{BieriStrebel1978} there exist finitely generated subgroups $A,B,C\leqslant G$ with $A,C \leqslant B$, and an isomorphism $\iota \colon A \to C$, such that $G$ splits as an HNN-extension $B\ast_{\iota}$, and dividing by $B$ coincides with $\varphi$.

Since $A$ is finitely generated and $G$ is LERF, we see that $A$ is separable. The result now follows from \cref{graph of groups friedl vidussi}.
\end{proof}

\begin{rmk}
	\label{LERF symmetric BNS}
	The proof of the above result together with \cref{semifibred so Alex} show that $\Sigma^1(G) = -\Sigma^1(G)$. This is a well-known fact that can be proved directly using \cref{HNN criterion}.
\end{rmk}

\cref{graph of groups friedl vidussi} can also be used in the setting of graphs of groups.

\begin{thm}
	Let $R$ be an integral domain.  Let $G$ be a finitely generated fundamental group of a finite reduced graph of groups $\calg$.  Let $\varphi\in H^1(G;\ZZ)$ be a non-zero character and suppose that $G$ is LERF.  If 
	the first twisted Alexander polynomials of $\varphi$  never vanish, then for every finitely generated edge group $A$ precisely one of the following holds:
	\begin{enumerate}
		\item either $G=A\rtimes\ZZ$ with $\varphi$ being the projection map,
		\item or $\varphi|_A\neq 0$.
	\end{enumerate}
\end{thm}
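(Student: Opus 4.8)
The plan is to collapse $\calg$ onto a single edge and invoke \cref{graph of groups friedl vidussi}. Fix an edge $e$ of the underlying graph $\Gamma$ whose edge group $A$ is finitely generated; since $G$ is LERF, the subgroup $A$ is then separable in $G$. Option (2) is precisely the assertion $\varphi\vert_A\neq 0$, whereas option (1) entails $A=\ker\varphi$ and hence $\varphi\vert_A=0$; thus the two options exclude one another, and it remains to show that $\varphi\vert_A=0$ forces option (1). (Here $\varphi\vert_A$ is unambiguous, since $\varphi$ is conjugation-invariant, $\ZZ$ being abelian.)

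So assume $\varphi\vert_A=0$. The first step is to collapse each connected component of the sub-graph-of-groups $\calg\vert_{\Gamma\smallsetminus\{e\}}$ to a point; this standard operation does not change the fundamental group, and it exhibits $G$ as a graph of groups with a single edge $e$ carrying the edge group $A$. If $\Gamma\smallsetminus\{e\}$ is connected, the result is an HNN extension $G=B\ast_\iota$ with base $B=\pi_1(\calg\vert_{\Gamma\smallsetminus\{e\}})$ and associated subgroups the images in $B$ of the attaching maps of $e$; its unique edge is a loop, so this one-edge decomposition is reduced. If $\Gamma\smallsetminus\{e\}$ has two connected components $\Gamma_1,\Gamma_2$, the result is instead an amalgamated free product $G=G_1\ast_A G_2$ with $G_i=\pi_1(\calg\vert_{\Gamma_i})$, whose unique edge is not a loop.

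It then remains to rule out the amalgam case. If $G_1\ast_A G_2$ fails to be reduced, then both attaching maps of its non-loop edge are isomorphisms, so $G_1\cong A\cong G_2$ and the amalgam collapses to $G\cong A$; but then $\varphi\vert_A=\varphi\neq 0$, contradicting $\varphi\vert_A=0$. If $G_1\ast_A G_2$ is reduced, then $G$ splits over the separable subgroup $A$ as a reduced one-edge graph of groups, $\varphi$ is non-zero and vanishes on the edge group, and the first twisted Alexander polynomials of $\varphi$ do not vanish; hence \cref{graph of groups friedl vidussi} applies and forces this splitting to have only one vertex — impossible for an amalgam. Therefore $\Gamma\smallsetminus\{e\}$ is connected, and $G=B\ast_\iota$ is the reduced HNN splitting over $A$ above.

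Applying \cref{graph of groups friedl vidussi} to the HNN splitting $G=B\ast_\iota$ (its hypotheses hold just as in the reduced amalgam case) yields that $\varphi$ is algebraically fibred with kernel equal to $A$. Thus $1\to A\to G\xrightarrow{\varphi}\ZZ\to 1$ is exact with $A$ finitely generated; as $\ZZ$ is free this sequence splits, so $G=A\rtimes\ZZ$ with $\varphi$ the projection, which is option (1). Combined with the first paragraph, exactly one of (1) and (2) holds. I expect the only genuine obstacle to be the graph-of-groups bookkeeping of the collapse — in particular, noticing that a non-reduced collapsed amalgam forces $G\cong A$ — since the substantive content, namely that non-vanishing twisted Alexander polynomials together with separability of the edge group leave no possibility other than an ascending-and-descending HNN extension, is already packaged in \cref{graph of groups friedl vidussi}.
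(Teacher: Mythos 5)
Your proof is correct and takes essentially the same route as the paper's: fix the finitely generated edge group, collapse the complementary subgraph to get a one-edge splitting, and apply \cref{graph of groups friedl vidussi}, distinguishing the HNN (non-separating edge) and amalgam (separating edge) cases. You are slightly more careful than the paper in verifying mutual exclusivity of (1) and (2) and in checking that the collapsed splitting is reduced (which, incidentally, is automatic for the amalgam since $\calg$ was already reduced: if $A\to H_i$ fails to be onto for some original vertex group $H_i$, then a fortiori $A\to G_i\supseteq H_i$ is not onto), but the substance is identical.
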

\begin{proof}
	Let $A$ be an arbitrary finitely generated edge group in the graph of groups decomposition and let $e$ be any edge with edge group $A$.
	The proof splits into two cases.
	
	If $e$ is non-separating, then we may collapse all the other edges and obtain a splitting of $G$ as an HNN extension with edge group $A$. Now, \cref{graph of groups friedl vidussi} tells us that if $\varphi\vert_A =0$, then $\varphi$ is algebraically fibred with kernel $A$, that is, $G = A \rtimes \Z$.
	
	If $e$ is a separating edge, then $G$ splits as a free product amalgamated over $A$. \cref{graph of groups friedl vidussi} tells us that $\varphi|_A\neq 0$.
\end{proof}

\subsection{Products of \texorpdfstring{$\TAP_1(R)$}{TAP\textoneinferior(R)} groups}

We will now discuss the structure of the BNS invariants for products of groups. When working over fields, this structure is completely understood in terms of BNS invariants of factors; over general commutative rings all we have is an inequality. To understand the inequality, recall that we have defined the BNS invariants $\Sigma^n(G;R)$ as subsets of $H^1(G;\RR) \smallsetminus \{0\}$. 
%This in particular applies to $\Sigma^0(G;R)$.  
For a subset $U \subseteq H^1(G;\RR)$ we denote the complement by $U^c = H^1(G;\RR) \smallsetminus U$. In particular, we have $ \Sigma^0(G;R)^c = \{0\}$. 

When $G = G_1 \times G_2$, we have $H^1(G;\RR) = H^1(G_1;\RR) \oplus H^1(G_2;\RR)$. Given subsets $U_i \subseteq H^1(G_i;\RR)$ we define their \emph{join} to be
\[
U_1 \ast U_2 = \{tu_1 + (1-t)u_2 \mid u_i \in U_i, t \in [0,1] \}.
\] 

The following inequality is due to Meinert; see \cite{BieriGeoghegan2010} for the history of this and \cite{Gerhke1998} for a proof.  The ``moreover'' is due to Bieri--Geoghegan \cite{BieriGeoghegan2010} and for $R=\ZZ$ the inequality can be strict \cite{Schutz2008}.

\begin{thm}[Meinert's inequality]
Let $G_1$ and $G_2$ be groups of type $\mathsf{FP}_n(R)$ where $R$ is a commutative ring, and let $G=G_1\times G_2$.  Then \[\Sigma^n(G;R)^c\subseteq\bigcup^n_{p=0}\Sigma^p(G_1;R)^c\ast\Sigma^{n-p}(G_2;R)^c.\]
Moreover, equality holds if $R$ is a field.
\end{thm}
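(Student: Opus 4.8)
The plan is to deduce Meinert's inequality from the spectral-sequence / chain-complex calculation that underlies the characterisation of $\Sigma^n$ in terms of the Novikov ring, following the approach of Sikorav as rendered in \cite{Fisher2021}. Recall that a character $\psi \colon G \to \RR$ lies in $\Sigma^n(G;R)$ if and only if the trivial module $R$ admits a projective resolution that becomes chain-contractible up to degree $n$ after tensoring with the Novikov ring $\widehat{RG}^{\psi}$; equivalently (Fisher's version of Sikorav's theorem) $H_i(G; \widehat{RG}^{\psi}) = 0$ for $i \leqslant n$, provided $G$ is of type $\mathsf{FP}_n(R)$. So the task is to understand, for $G = G_1 \times G_2$ and a character $\varphi = (\varphi_1, \varphi_2)$ on $G$, when $H_i(G; \widehat{RG}^{\varphi}) = 0$ for $i \leqslant n$, in terms of the vanishing of Novikov homology of the factors with respect to characters $\psi_j$ "pointing in directions visible from $\varphi$".

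First I would set up the Künneth / Bieri--Eckmann argument: pick finite-type-up-to-degree-$n$ projective resolutions $C_\bullet^{(j)}$ of $R$ over $RG_j$ for $j = 1,2$, so that $C_\bullet^{(1)} \otimes_R C_\bullet^{(2)}$ is such a resolution over $RG$. Tensoring with $\widehat{RG}^{\varphi}$, one wants to relate the resulting complex to the Novikov complexes of the factors. The key algebraic point is that the Novikov completion of $G = G_1 \times G_2$ in the direction $\varphi$ "remembers" only the linear functional $\varphi$, not a product structure, so the naive identity $\widehat{RG}^{\varphi} = \widehat{RG_1}^{\varphi_1} \otimes \widehat{RG_2}^{\varphi_2}$ fails; instead one has inclusions of the factor-Novikov completions indexed by the extremal rays, and the combinatorics of the join $U_1 \ast U_2$ records exactly which "corner" completions one can use to kill homology. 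Concretely: if $\varphi = t u_1 + (1-t) u_2$ with $u_j \notin \Sigma^{p_j}(G_j;R)$ and $p_1 + p_2 = n$, one shows that the $\varphi$-Novikov homology of the product cannot vanish in all degrees $\leqslant n$, by restricting to the rays $u_1, u_2$ and using a Künneth-type lower bound on $H_n$ — this gives the containment $\Sigma^n(G;R)^c \subseteq \bigcup_{p} \Sigma^p(G_1;R)^c \ast \Sigma^{n-p}(G_2;R)^c$ in the form stated. For the "moreover", when $R = F$ is a field every module is flat, the Künneth sequence splits, and $\widehat{FG}^{\varphi}$ is a skew field (or at least a sufficiently well-behaved ring) so that one gets an exact tensor identity $H_n(G; \widehat{FG}^{\varphi}) \cong \bigoplus_{p+q=n} H_p(G_1; \ast) \otimes_F H_q(G_2; \ast)$ with the factor-Novikov rings attached to the appropriate extremal directions; this makes the containment an equality.

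The main obstacle, and the place where the field hypothesis really enters, is the failure of the Künneth formula to split over a general commutative ring $R$: the $\Tor$ terms $\Tor_1^R\bigl(H_p(G_1;\widehat{R G_1}^{u_1}),\, H_q(G_2;\widehat{R G_2}^{u_2})\bigr)$ can contribute, so a class can die in the product even though neither factor's homology vanishes — which is precisely the source of the strict inequality over $\ZZ$ exhibited by Sch\"utz \cite{Schutz2008}. Over a field these $\Tor$ terms vanish, and additionally one needs that the relevant Novikov ring is flat over $F$ (automatic) and that the homology modules in play are free/flat so that tensoring commutes with taking homology; all of this is clean in the field case. So in writing this up I would (i) prove the one-sided containment over any commutative ring by a Novikov-homology argument combined with a Künneth lower bound on the top homology, citing \cite{Fisher2021} for the Novikov characterisation of $\Sigma^n$, and (ii) upgrade to equality when $R$ is a field by running the Künneth argument in the derived/split form and invoking flatness, noting that this last step is exactly where the counterexamples over $\ZZ$ obstruct the general statement. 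Rather than reconstruct the full combinatorial bookkeeping of which extremal rays pair up — which is standard and spelled out in \cite{Gerhke1998} and \cite{BieriGeoghegan2010} — I would cite those sources for the detailed verification and present only the conceptual skeleton above.
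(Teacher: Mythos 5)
The paper does not prove Meinert's inequality; it cites \cite{Gerhke1998} for the inclusion, \cite{BieriGeoghegan2010} for the equality over a field, and \cite{Schutz2008} for strictness over $\ZZ$, so your decision to defer to those sources is consistent with what the authors do. However, the sketch you offer has the inclusion pointing the wrong way. You argue that if $\varphi = tu_1 + (1-t)u_2$ with $u_j \notin \Sigma^{p_j}(G_j;R)$ and $p_1 + p_2 = n$, then the $\varphi$-Novikov homology of $G$ is nonzero in some degree $\leqslant n$, and you say ``this gives the containment $\Sigma^n(G;R)^c \subseteq \bigcup_p \Sigma^p(G_1;R)^c \ast \Sigma^{n-p}(G_2;R)^c$.'' What that argument actually establishes is the \emph{opposite} containment: you start from $\varphi$ lying in a join of complements and deduce that $\varphi$ lies in $\Sigma^n(G;R)^c$. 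That opposite containment is exactly the one Sch\"utz shows \emph{fails} over $\ZZ$ -- a fact your own discussion of the $\Tor$-term obstruction correctly predicts, and which should have been a warning that the sketch cannot prove the unconditional part of the theorem.

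The unconditional containment $\Sigma^n(G;R)^c \subseteq \bigcup_p \Sigma^p(G_1;R)^c \ast \Sigma^{n-p}(G_2;R)^c$ requires the contrapositive: if $\varphi$ lies in no join of complements (so that for every $p$ either $\varphi\vert_{G_1}\in\Sigma^p(G_1;R)$ or $\varphi\vert_{G_2}\in\Sigma^{n-p}(G_2;R)$), then $H_i(G;\widehat{RG}^\varphi)=0$ for $i\leqslant n$. That is an acyclicity, or ``upper bound,'' argument, insensitive to the non-splitting of K\"unneth, which is why it holds over arbitrary commutative $R$; your sketch does not touch this direction at all. There is also a secondary gap in the ``moreover'' step: even over a field $F$, the ring $\widehat{RG}^\varphi$ strictly contains $\widehat{RG_1}^{u_1}\otimes_R\widehat{RG_2}^{u_2}$ (a point you yourself flag), so the K\"unneth isomorphism $H_n(G;\widehat{FG}^{\varphi})\cong\bigoplus_{p+q=n}H_p(G_1;\ast)\otimes_F H_q(G_2;\ast)$ that you invoke is not automatic and needs the comparison between the two rings to be justified; that comparison is precisely where the content of \cite{Gerhke1998,BieriGeoghegan2010} lies.
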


\begin{prop}\label{cor.products}
Let $R$ be an integral domain  and let $G_1$ and $G_2$ be finitely generated groups. 
  If $G_i$ is in $\TAP_1(R)$ for $i=1,2$, then $G_1\times G_2$ is in $\TAP_1(R)$.
\end{prop}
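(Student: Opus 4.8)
The plan is to verify the defining property of $\TAP_1(R)$ directly for $G = G_1 \times G_2$: given a non-trivial character $\varphi \in H^1(G;\ZZ)$, we must show that $\varphi$ is algebraically semi-fibred if and only if all its twisted first Alexander polynomials (and its zeroth ones) are non-vanishing. The ``only if'' direction is free from \cref{semifibred so Alex}, so the content is the converse. Write $\varphi = (\varphi_1, \varphi_2)$ according to the splitting $H^1(G;\RR) = H^1(G_1;\RR) \oplus H^1(G_2;\RR)$, and split into cases depending on whether one of the $\varphi_i$ vanishes.

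First I would dispatch the degenerate case $\varphi_2 = 0$ (so $\varphi = \varphi_1 \circ \mathrm{pr}_1$ with $\varphi_1 \neq 0$). Here I would compute the twisted Alexander modules of $\varphi$ via a Künneth/Shapiro argument: for a finite quotient $\alpha \colon G \onto Q$, one can restrict to quotients of the form $\alpha_1 \times \alpha_2$ (every finite quotient is dominated by such a product quotient, and non-vanishing only gets easier under such domination — this needs a small lemma, essentially \cref{untwisting} applied to the product), and then $H_1(G; RQ[t^{\pm 1}])$ decomposes in terms of $H_\ast(G_1; RQ_1[t^{\pm 1}])$ and $H_\ast(G_2; RQ_2)$ by the Künneth formula. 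One reads off that $\varphi$ has non-vanishing twisted Alexander polynomials up to degree $1$ if and only if $\varphi_1$ does \emph{and} $G_2$ is of type $\mathsf{FP}_1(F)$-``nice'' in the appropriate sense (concretely, $H_0(G_2;F)$ and $H_1(G_2;F)$ finite-dimensional suffices after torsion bookkeeping); since $G_1 \in \TAP_1(R)$, the non-vanishing for $\varphi_1$ upgrades to $\varphi_1 \in \Sigma^1(G_1) \cup -\Sigma^1(G_1)$, and then $\varphi \in \Sigma^1(G) \cup -\Sigma^1(G)$ by Meinert (or directly: $G_\varphi = (G_1)_{\varphi_1} \times G_2$ is finitely generated). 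The symmetric case $\varphi_1 = 0$ is identical.

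The main case is $\varphi_1 \neq 0$ and $\varphi_2 \neq 0$. Here I would argue that $\varphi \in \Sigma^1(G)$ automatically, using Meinert's inequality in degree $1$: $\Sigma^1(G;R)^c \subseteq \big(\Sigma^0(G_1;R)^c \ast \Sigma^1(G_2;R)^c\big) \cup \big(\Sigma^1(G_1;R)^c \ast \Sigma^0(G_2;R)^c\big)$. Since $\Sigma^0(G_i;R)^c = \{0\}$, a point of $\Sigma^1(G;R)^c$ with \emph{both} coordinates non-zero cannot lie in either join (a non-trivial convex combination $t u_1 + (1-t) u_2$ with $u_i$ in a complement and one of the $u_i$ forced to be $0$ has a zero coordinate). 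Hence every $\varphi$ with $\varphi_1 \neq 0 \neq \varphi_2$ lies in $\Sigma^1(G;R) \subseteq \Sigma^1(G)$, so is already algebraically fibred — no hypothesis on the Alexander polynomials is even needed in this case, and the ``if'' holds vacuously. (One still needs that $G = G_1 \times G_2$ is of type $\mathsf{FP}_1(R)$, i.e.\ finitely generated, which is immediate.)

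The step I expect to be the main obstacle is the Künneth bookkeeping in the degenerate case: one must be careful that ``non-vanishing of the first twisted Alexander polynomial of $\varphi$ over $R$'' (phrased as $\rk_{R[t^{\pm 1}]} H^{\varphi,\alpha}_{1,R} = 0$, per \cref{def vanishing}) correctly disentangles into a condition purely about $\varphi_1$ on $G_1$ versus a finiteness condition on $G_2$, and that the reduction to product quotients $\alpha_1 \times \alpha_2$ does not lose information. Invoking \cref{vanishing} to pass to $F = \Frac(R)$ first is what makes this tractable, since over a field the Künneth formula has no $\Tor$ correction and ``torsion $F[t^{\pm 1}]$-module'' becomes ``finite-dimensional over $F$'', which behaves well under tensor products. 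Once that is set up cleanly, the rest is a direct application of the hypothesis $G_1 \in \TAP_1(R)$ together with the trivial observation that $(G_1 \times G_2)_\varphi = (G_1)_{\varphi_1} \times G_2$ is finitely generated precisely when $(G_1)_{\varphi_1}$ is.
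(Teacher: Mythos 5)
Your plan is correct and follows essentially the same route as the paper's proof, just phrased directly rather than as a contrapositive: the paper assumes $\varphi$ is not semi-fibred, invokes Meinert's inequality to force exactly one coordinate $\varphi_i$ to vanish (which is exactly your case (3) handled first), then applies $\TAP_1(R)$ to the relevant factor and a K\"unneth argument to produce a quotient with vanishing Alexander polynomial. One remark on your expected obstacle: the ``domination by product quotients'' step and the finiteness condition on $G_2$ are not actually needed. Since the hypothesis is that \emph{every} finite quotient $\alpha\colon G\onto Q$ gives a non-vanishing polynomial, you may simply specialise to quotients of the form $\alpha_1\circ\mathrm{pr}_1$; then the K\"unneth decomposition (over $F=\Frac(R)$, or via the K\"unneth spectral sequence over $R$, as the paper does) exhibits $H^{\varphi_1,\alpha_1}_{1,F}$ as a direct summand of $H^{\varphi,\alpha}_{1,F}$, which is all that is needed to transfer non-vanishing to $\varphi_1$. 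The converse implication, where your domination lemma would enter, is already covered by \cref{semifibred so Alex} and needs no separate argument.
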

\begin{proof}
Let $G=G_1\times G_2$.  
Suppose that there exists $\varphi\colon G\twoheadrightarrow\ZZ$ that is not algebraically semi-fibred and is non-zero.  It suffices to show that there exists a finite quotient $\alpha\colon G\twoheadrightarrow Q$ such that the corresponding twisted Alexander polynomial vanishes.

By Meinert's inequality, we have
\[\varphi\in (\Sigma^1(G_1;R)^c\ast \{0\}) \cup (\{0\} \ast\Sigma^1(G_2;R)^c). \]
In particular, for exactly one $i\in\{1,2\}$ we have $\varphi|_{G_i}=0$.  Suppose without loss of generality that $i=2$.  

Now, we have a splitting $\ker(\varphi)=\ker(\varphi|_{G_1})\times G_2$.  Since $G_1$ lies in $\TAP_1(R)$, there exists a finite quotient $\alpha_1\colon G_1\twoheadrightarrow Q$ such that the module $H_{1,R}^{\varphi|_{G_1},\alpha_1}$ is not  $R[t^{\pm 1}]$-torsion, and hence contains a free $R[t^{\pm 1}]$-module. Let $F$ denote $\Frac(R)$. Since $F$ is a flat $R$-module, and since $\dim_F F \otimes_R R[t^{\pm 1}] = \infty$,  we immediately see that
\[
\dim_F F \otimes_R H_{1,R}^{\varphi|_{G_1},\alpha_1} = \infty.
\] 

Define $\alpha\colon G\twoheadrightarrow Q$ to be the composite $G\twoheadrightarrow G_1\twoheadrightarrow Q$.   Applying Shapiro's lemma (as in the proof of \cref{untwisting}), and then \cite[III.8.2]{Brown1982}, gives isomorphisms of $R$-modules  
\[H_{1,R}^{\varphi,\alpha}\cong H_{1,R}^{\varphi|_{\ker\alpha},\tr}\cong H_1(\ker(\varphi)\cap\ker(\alpha);R), \]
but $\ker(\varphi)\cap\ker(\alpha)\cong (\ker(\varphi|_{G_1})\cap\ker(\alpha_1))\times G_2$. 
  It follows that we can compute $H_{1,R}^{\varphi,\alpha}$ by the K\"unneth spectral sequence (note that $R$ is not necessarily a PID so we cannot use the K\"unneth formula, see \cite[Theorem~11.34]{Rotman2009}). We have 
\[H_{1,R}^{\varphi|_{G_1},\alpha_1}\otimes_R R\cong H_{1,R}^{\varphi|_{G_1},\alpha_1}= \Tor_0^R(H_{1,R}^{\varphi|_{G_1},\alpha_1},R) \leqslant H_{1,R}^{\varphi,\alpha} \]
as $R$-modules. We conclude that
\[
\dim_F F \otimes_R H_{1,R}^{\varphi,\alpha} = \infty.
\]
Using flatness again we get 
\[
\dim_F H_{1,F}^{\varphi,\alpha}  = \infty,
\]
and hence the first Alexander polynomials  twisted by $\alpha$ over $F$ and over $R$ vanish by \cref{vanishing}.
\end{proof}

\subsection{Products of limit groups are \texorpdfstring{$\TAP_\infty(\FF)$}{TAP\textinfty(F)}}

We say that $G$ is a \emph{limit group} precisely when it is a finitely generated fully residually free group, that is, for any finite subset $X$ of $G$, there is an epimorphism $f\colon G\to F$ which is injective on $X$ and where $F$ is a free group. 

\begin{thm}\label{limit TAP infinity}
Let $\FF$ be a field and let $G=\prod_{i=1}^n G_i$ be a finite product of limit groups.  Then $G$ is in $\TAP_\infty(\FF)$.
\end{thm}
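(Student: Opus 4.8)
The plan is to reduce the statement about the product $G = \prod_{i=1}^n G_i$ to facts about the individual limit groups $G_i$, exploiting (a) Meinert's inequality together with its equality over a field, and (b) the known structure of BNS invariants of limit groups. The key input on limit groups is that a limit group $L$ is of type $\mathsf F_\infty$ (hence $\mathsf{FP}_\infty(\FF)$ for every field), and that $L$ is LERF by the work of Wilton, so by \cref{LERF implies TAP} each $G_i$ lies in $\TAP_1(\FF)$; moreover one knows that a limit group either is free abelian or has trivial (empty) higher BNS invariants $\Sigma^n(G_i;\FF)$ for $n \geqslant 1$ whenever it is non-abelian, because a finitely generated normal subgroup of infinite index in a limit group must already fail to be finitely generated except in degenerate cases. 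So the first step is to assemble: each $G_i$ is $\mathsf{FP}_\infty(\FF)$, each $G_i$ is in $\TAP_\infty(\FF)$ (combining $\TAP_1(\FF)$ with the symmetry and the understanding of which $\varphi$ are semi-fibred over a limit group), and consequently $G$ is of type $\mathsf{FP}_\infty(\FF)$ as a product.

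The second step is to analyse an arbitrary non-trivial character $\varphi \colon G \to \ZZ$ using Meinert's inequality with equality over the field $\FF$. Writing $H^1(G;\RR) = \bigoplus_i H^1(G_i;\RR)$ and $\varphi = \sum_i \varphi_i$, equality in Meinert's theorem says precisely that $\varphi \notin \Sigma^n(G;\FF)$ iff there is a partition of $n$ into $p_1 + \cdots + p_n$ (and a barycentric-combination witness) with each $\varphi_i \notin \Sigma^{p_i}(G_i;\FF)$. Using the structural fact that $\Sigma^{p}(G_i;\FF)$ for a limit group is: everything for $p=0$; symmetric and equal to $\Sigma^1$ for all $p \geqslant 1$; and, when $G_i$ is non-abelian, $\Sigma^1(G_i;\FF)^c$ is the whole sphere for $\varphi_i \neq 0$ while only $\varphi_i = 0$ gives a point of $\Sigma^0$ — this lets one show that $\varphi \in \Sigma^\infty(G;\FF) \cup -\Sigma^\infty(G;\FF)$ exactly when each restriction $\varphi_i$ is itself $\mathsf{FP}_\infty(\FF)$-semi-fibred in $G_i$ (which for a non-abelian limit group forces $\varphi_i = 0$, and for an abelian $G_i \cong \ZZ^{k_i}$ is automatic). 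In other words, $\varphi$ is $\mathsf{FP}_\infty(\FF)$-semi-fibred iff it vanishes on every non-abelian factor.

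The third step handles the two directions of the $\TAP_\infty(\FF)$ condition. If $\varphi$ is $\mathsf{FP}_\infty(\FF)$-semi-fibred, then \cref{semifibred so Alex} immediately gives non-vanishing of all its twisted Alexander polynomials in every degree $\leqslant \infty$. Conversely, suppose $\varphi$ is not $\mathsf{FP}_\infty(\FF)$-semi-fibred; by the structural analysis there is some non-abelian factor, say $G_1$, with $\varphi_1 \neq 0$, and then $\varphi_1 \notin \Sigma^1(G_1;\FF) \cup -\Sigma^1(G_1;\FF)$. Since $G_1 \in \TAP_1(\FF)$, there is a finite quotient $\alpha_1 \colon G_1 \onto Q$ with $H^{\varphi_1,\alpha_1}_{1,\FF}$ having positive $\FF[t^{\pm 1}]$-rank; pulling $\alpha_1$ back along $G \onto G_1$ to get $\alpha \colon G \onto Q$, one argues exactly as in the proof of \cref{cor.products} — Shapiro's lemma plus \cite[III.6.2, III.8.2]{Brown1982} plus a K\"unneth argument for the product $\ker(\varphi)\cap\ker(\alpha) \cong (\ker(\varphi_1)\cap\ker(\alpha_1)) \times \prod_{i\geqslant 2} G_i$ — that $H^{\varphi,\alpha}_{1,\FF}$ still has infinite $\FF$-dimension, hence vanishing first twisted Alexander polynomial, contradicting the hypothesis that all twisted polynomials in all degrees $\leqslant \infty$ are non-vanishing. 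The main obstacle I anticipate is the bookkeeping in the second step: correctly extracting from Meinert's equality over a field the clean statement ``$\varphi$ semi-fibred iff it vanishes on each non-abelian factor'', and in particular being careful that when all factors are abelian every character is semi-fibred and there is nothing to prove, while the join operation and the asymmetry $\Sigma^0$ vs.\ higher $\Sigma^n$ are tracked correctly through the union over partitions. The limit-group input (LERF, $\mathsf F_\infty$, and the coarse shape of $\Sigma^1$) should be quotable from the literature.
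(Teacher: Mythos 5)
Your setup (limit groups are LERF, of type $\mathsf{F}$, hence each factor and the product lie in $\TAP_1(\FF)$; non-abelian limit groups have $\Sigma^1 = \emptyset$) is correct, but there are two genuine gaps in the way you try to upgrade $\TAP_1$ to $\TAP_\infty$.

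First, the structural claim ``$\varphi$ is $\mathsf{FP}_\infty(\FF)$-semi-fibred iff it vanishes on every non-abelian factor'' is false. Take $G = \ZZ \times F_2$ and let $\varphi$ be non-zero on both factors. The projection $\ker\varphi \to F_2$, $(n,w)\mapsto w$, is an isomorphism (once $w$ is given, $n$ is forced), so $\ker\varphi$ is of type $\mathsf F$ and $\varphi$ is $\mathsf{FP}_\infty(\FF)$-fibred even though $\varphi\vert_{F_2}\neq 0$. If you run Meinert's equality carefully, using $\Sigma^p(G_i;\FF)^c = \{0\}$ for abelian $G_i$ and $\Sigma^p(G_i;\FF)^c = H^1(G_i;\RR)$ ($p\geqslant 1$) for non-abelian $G_i$, what you actually get is: $\varphi\in\Sigma^\infty(G;\FF)$ iff $\varphi$ is non-zero on some \emph{abelian} factor. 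Your deduction that a non-semi-fibred $\varphi$ must be non-zero on some non-abelian factor is still correct, but the intermediate ``in other words'' step is not.

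Second, and more fatally, the argument producing a vanishing Alexander polynomial breaks. You pull $\alpha_1 \colon G_1 \onto Q$ back to $\alpha \colon G \onto Q$ and invoke the K\"unneth argument from the proof of \cref{cor.products}, which rests on the decomposition $\ker\varphi\cap\ker\alpha \cong (\ker\varphi_1\cap\ker\alpha_1)\times\prod_{i\geqslant 2}G_i$. That decomposition requires $\varphi\vert_{G_i}=0$ for all $i\geqslant 2$. But in the case at hand $\varphi$ may be non-zero on several non-abelian factors — and this is exactly the hard case, because (again by Meinert's equality) if $\varphi$ is supported on $m\geqslant 2$ non-abelian factors and vanishes on all abelian ones, then $\varphi\in\Sigma^{m-1}(G;\FF)\smallsetminus\Sigma^{m}(G;\FF)$. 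In particular $\varphi$ \emph{is} $\mathsf{FP}_1(\FF)$-semi-fibred, so by \cref{semifibred so Alex} its first twisted Alexander polynomials do \emph{not} vanish. Your argument would claim a vanishing first twisted polynomial, which is wrong; the vanishing one actually needs occurs in dimension $m$, not dimension $1$. This cannot be fixed by the K\"unneth bookkeeping alone, because the relevant subgroup $\ker\varphi\cap\ker\alpha$ is a fibre product, not a direct product.

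The paper's proof gets around this precisely by not trying to exhibit a vanishing polynomial in dimension $1$. After passing to a finite-index normal subgroup $H\times K$ on which $\varphi$ restricts to something surjective on the $H$-factors and zero on the $K$-factors, it invokes the Bridson--Howie--Miller--Short theorem to show that $H_p(\ker(\varphi\vert_H);\FF)$ is infinite-dimensional where $p$ is the number of factors on which $\varphi$ is surjective, and then uses Meinert's inequality to match this $p$ to the first degree $k$ at which $\varphi$ fails to be semi-fibred. That extra input from \cite{BridsonHowieMillerShort2009} is the missing ingredient in your proposal.
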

\begin{proof}
By \cite{Wilton2008} limit groups are LERF, and by \cite{Sela2001I} limit groups are finitely presented, and hence $\mathsf{FP}_2(\Z)$; in fact, by \cite[Exercise 13]{BestvinaFeighn2009}, limit groups are of type $\mathsf{F}$.  It follows that products of limit groups are $\TAP_1(\FF)$ by \Cref{LERF implies TAP} and \Cref{cor.products}.

Let $\varphi\colon G\twoheadrightarrow \ZZ$ be a character which is $\mathsf{FP}_{k-1}(\FF)$-semi-fibred but not $\mathsf{FP}_k(\FF)$-semi-fibred for some $2\leqslant k\leqslant n$. If no such $k$ exists, then we are done by \cref{semifibred so Alex}. The same result tells us that
 the twisted Alexander polynomials of $\varphi$ in dimension at most $k-1$ never vanish. We need to exhibit a vanishing one in dimension $k$. \cref{untwisting} tells us that it is enough to find such a vanishing twisted polynomial for some normal finite-index subgroup of $G$.

 We may assume that if some $G_i$ is abelian then $\varphi|_{G_i}=0$.  Otherwise, $\varphi$ would be $\mathsf{FP}_\infty(\FF)$-semi-fibred by Meinert's inequality. 
After passing to a finite index normal subgroup $H\times K$ with $H=\prod_{i=1}^p H_i, K= \prod_{j=1}^q K_j$, $p+q=n$, $H_i\trianglelefteqslant G_i$, and $K_j= G_{q+j}$, we may assume that $\varphi|_{H_i}$ is surjective and $\varphi|_{K_j}=0$.  Let $\psi$ denote the restriction of $\varphi$ to $H$.  By \cite[Theorem~7.2]{BridsonHowieMillerShort2009} (note that the result is only stated for $\QQ$ but by the paragraph after Theorem~C loc.\ cit.\, it holds for arbitrary fields) we have that $H_p(\ker\psi;\FF)$ has infinite dimension over $\FF$ (here we are using the fact that $\psi$ vanishes on abelian factors).  It follows from \Cref{vanishing} that the twisted Alexander polynomial of $G$ in dimension $p$ associated to $\alpha\colon G\twoheadrightarrow G/(H\times K)$ vanishes.  

We have found a vanishing Alexander polynomial in dimension $p$.  Note that $p\geqslant k$ since $\varphi$ is $\mathsf{FP}_{k-1}(\FF)$-semi-fibred. Meinert's inequality tells us that $\Sigma^{p-1}(G;R)^c$ is the union of joins of the form
\[
\Sigma^{m_1}(G_1;R)^c \ast \dots \ast \Sigma^{m_n}(G_n;R)^c
\]
with $\sum m_i = p-1$. Each such join must therefore have at most $p-1$ factors with $m_i >0$, and hence characters lying in such a join must vanish on all but at most $p-1$ factors $G_i$. But $\varphi$ does not vanish on $p$ factors, and hence $\varphi \in  \Sigma^{p-1}(G;R)$. Hence $p-1 \leqslant k-1$, and therefore $p=k$. We have now shown that the first dimension in which a twisted Alexander polynomial vanishes is equal to the first dimension in which $\varphi$ is not semi-fibred. This proves the claim.  
\end{proof}

\begin{rmk}
    It was pointed out by a referee that \Cref{limit TAP infinity} might hold for the more general class of (products of) limit groups over Droms RAAGs, studied in \cite{Casals_Kazachkov2011,LopezGZ22,Casals_EtAl23,Fruchter23,KochloukovaLopezGZ24} (we refer the reader to the previous citations for the relevant definitions).  The authors are happy to report that it does.

    \begin{thm}
    Let $\FF$ be a field and let $G=\prod_{i=1}^n G_i$ be a product of limit groups over centreless Droms RAAGs.  Then $G$ is in $\TAP_\infty(\FF)$.
    \end{thm}
    
    \begin{proof}[Sketch proof.] By \cite[Corollary 9.5]{Casals_EtAl23}, a limit group over a Droms RAAG is type $\mathsf{F}_\infty$. Since finitely generated subgroups of a limit group are themselves limit gorups, it follows that such subgroups are finitely presented. Hence \cite[Theorem~10.8]{LopezGZ22} implies that a limit group over a Droms RAAG is LERF.  The remainder of the argument follows \Cref{limit TAP infinity} verbatim, except we replace the use of \cite[Theorem~7.2]{BridsonHowieMillerShort2009} with \cite[Theorem~7.3]{LopezGZ22} and note that the latter result holds for arbitrary fields (checking this is laborious, but not hard).\end{proof}
    \end{rmk}

\section{Profinite rigidity of fibring}

\subsection{Cohomological preliminaries}
The goal of this subsection is to establish the relationship between the cohomology of a group and of its profinite completion.

\begin{dfn}
	Let $G$ be a group, $R$ be a ring, and let $\calc$ be a directed system of normal finite-index subgroups of $G$. We set
	\[
	\widehat{G}_\calc = \varprojlim_{U\in\calc} G/U
	\]
	and
		\[
	\cgr R G_\calc = \varprojlim_{U\in\calc} R(G/U).
	\]
	
	When $\calc$ consists of all normal subgroups of finite index, we write $\widehat{G}$ for $\widehat{G}_\calc$, $\cgr R G$ for $\cgr R G_\calc$, and call them respectively the \emph{profinite completion} and the \emph{completed group ring}. 
\end{dfn}

Note that $\widehat \Z$ is a ring with the obvious multiplication.

The groups $\widehat G$ and more generally $\widehat G_\calc$ carry a natural compact topology obtained as the limit of the discrete topology on $G/U$. Whenever we will use this topology, we will state it explicitly, as we do below.

\begin{dfn}
	Let $G$ be a residually finite group. We say that $G$ is \emph{$n$-good} if for all $0\leqslant j\leqslant n$ and all $\ZZ G$-modules $M$ that are finite as sets, the map
	\[H^n_{\mathrm{cont}}(\widehat{G};M)\to H^n(G;M) \]
	induced by the inclusion  $G\to\widehat{G}$ is an isomorphism.  Here, $H^\ast_{\mathrm{cont}}$ denotes \emph{continuous group cohomology} which is defined analogously to ordinary group cohomology except for the following modifications: First, we require $M$ to be a topological $\widehat G$-module, that is, $M$ carries a (possibly discrete) topology and the $\widehat G$-action on $M$ is continuous, and secondly, the cochain groups $C^\bullet_{\mathrm{cont}}(\widehat G;M)$ consist of continuous functions $\widehat{G}^n\to M$.  
	
	A group that is $n$-good for all $n$ is called \emph{cohomologically good}, or \emph{good in the sense of Serre}.
\end{dfn}

\begin{rmk}
	\label{1-good}
	It is very easy to see that every residually finite group is $1$-good.
\end{rmk}

\begin{prop}[{\cite[Lemma 3.2]{Grunewaldetal2008}}]
	Finite-index subgroups of $n$-good groups are themselves $n$-good.
	\label{good is virtual}
\end{prop}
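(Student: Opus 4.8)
The plan is to derive $n$-goodness of a finite-index subgroup $H\leqslant G$ from $n$-goodness of $G$ by comparing Shapiro's lemma in the discrete and profinite settings. First I would pin down the topological picture: since $G$ is residually finite so is $H$, and since $[G:H]<\infty$, every finite-index subgroup of $H$ contains the $G$-normal core of some finite-index subgroup of $G$, so the full profinite topology on $H$ coincides with the subspace topology inherited from $\widehat{G}$. Hence the closure of $H$ in $\widehat{G}$ is canonically $\widehat{H}$, it is an open subgroup of finite index, and $\widehat{G}/\widehat{H}\cong G/H$ as finite $G$-sets.

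Next, fix $0\leqslant j\leqslant n$ and a $\ZZ H$-module $M$ finite as a set. Because $[G:H]<\infty$, coinduction and induction from $H$ to $G$ coincide, so $N:=\Hom_{\ZZ H}(\ZZ G,M)\cong \ZZ G\otimes_{\ZZ H}M$ is a $\ZZ G$-module, still finite as a set, and Shapiro's lemma supplies a natural isomorphism $H^j(H;M)\cong H^j(G;N)$. On the profinite side, the module of continuous $\widehat{H}$-equivariant maps $\widehat{G}\to M$ is, via the identification $\widehat{G}/\widehat{H}\cong G/H$, naturally isomorphic as a (discrete) topological $\widehat{G}$-module to $N$; the continuous version of Shapiro's lemma then gives a natural isomorphism $H^j_{\mathrm{cont}}(\widehat{H};M)\cong H^j_{\mathrm{cont}}(\widehat{G};N)$. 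The goal becomes to show that these two isomorphisms fit into a commuting square
\[
\begin{tikzcd}
H^j_{\mathrm{cont}}(\widehat{G};N) \arrow[r] \arrow[d,"\cong"'] & H^j(G;N) \arrow[d,"\cong"] \\
H^j_{\mathrm{cont}}(\widehat{H};M) \arrow[r] & H^j(H;M)
\end{tikzcd}
\]
whose horizontal arrows are the comparison maps induced by the inclusions $H\hookrightarrow\widehat{H}$ and $G\hookrightarrow\widehat{G}$. Granting this, the right vertical map and both Shapiro maps are isomorphisms, the top horizontal map is an isomorphism because $G$ is $n$-good and $N$ is a finite $\ZZ G$-module, and hence the bottom map is an isomorphism as well -- which is exactly $n$-goodness of $H$ in degree $j$; since $j$ and $M$ are arbitrary, $H$ is $n$-good.

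The one genuinely technical point, and the main obstacle, is the commutativity of that square. I would check it on explicit (continuous) inhomogeneous bar cochains: the Shapiro isomorphism is realised on cochains by an ``evaluate at $1$'' map paired with an averaging map over a fixed set of coset representatives of $H$ in $G$ (equivalently of $\widehat{H}$ in $\widehat{G}$), and these formulae are literally the same in the discrete and continuous pictures, because $M$ and $N$ are finite and every cochain $\widehat{G}^{\,j}\to N$ that occurs factors through a finite quotient of $\widehat{G}$, on which it restricts to the corresponding cochain of $G$. Everything else in the argument is formal; one could instead compare the two Lyndon--Hochschild--Serre spectral sequences, but the cochain-level verification is the most direct route.
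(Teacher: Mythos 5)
Your proof is correct and is exactly the Shapiro's-lemma argument that the paper delegates to Grunewald--Jaikin-Zapirain--Zalesskii \cite[Lemma 3.2]{Grunewaldetal2008}: the paper gives no proof of its own beyond that citation, together with the remark that the cited argument works degree by degree. Your blind reconstruction — closure of $H$ in $\widehat{G}$ is $\widehat{H}$ because finite index forces the subspace topology to be the full profinite topology, coinduction equals induction for finite index, continuous cochains valued in finite modules factor through finite quotients so the discrete and profinite Shapiro maps agree on representatives, and hence the comparison square commutes — supplies precisely the details that the reference proves.
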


The above proposition is stated in a slightly less general way in the paper of Grunewald--Jaikin-Zapirain--Zalesskii~\cite{Grunewaldetal2008}, but the proof gives precisely what we claim above.

The following result is a slight variation on a theorem of Kochloukova and Zalesskii.  The only difference consists of replacing the assumption of $G$ being type $\mathsf{FP}_\infty$ with the assumption of $G$ being type $\mathsf{FP}_n$.  The proof is very similar but we include it to highlight the differences.

\begin{prop}\emph{\cite[Theorem~2.5]{KochloukovaZalesskii2008}}\label{prop.directedFPn}
Let $G$ be a group of type $\mathsf{FP}_n(\Z)$ and let $\calc$ be a directed system of finite index normal subgroups.  Suppose that for a fixed prime $p$ and for all $1\leqslant i \leqslant n$ we have
\[\varprojlim_{U\in\calc} H_i(U;{\Z/ p\Z})=0. \]
Then, for all $m\geqslant 1$ and $1\leqslant i \leqslant n$ we have
\[ \Tor^{\ZZ G}_i(\ZZ,\cgr{(\Z/ p^m\Z)}{G}_\calc)=0 \textrm{ and } \Tor^{\ZZ G}_i(\ZZ, \cgr{\Z_p} G_\calc)=0\]
where $\Z_p$ denotes the $p$-adic integers.
\end{prop}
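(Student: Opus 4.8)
The plan is to adapt the argument of Kochloukova--Zalesskii \cite[Theorem~2.5]{KochloukovaZalesskii2008} while keeping track of the fact that we only assume type $\mathsf{FP}_n(\ZZ)$ rather than $\mathsf{FP}_\infty$. Since $G$ is of type $\mathsf{FP}_n(\ZZ)$, we may choose a projective resolution $P_\bullet \to \ZZ$ of the trivial $\ZZ G$-module $\ZZ$ with $P_i$ a finitely generated free $\ZZ G$-module for every $i \leqslant n$ (we do not care about $P_i$ for $i > n$). For such a resolution, the homology of $P_\bullet \otimes_{\ZZ G} N$ computes $\Tor^{\ZZ G}_i(\ZZ, N)$ for every right $\ZZ G$-module $N$, and in the range $i \leqslant n$ the terms $P_i \otimes_{\ZZ G} N$ are simply finite direct sums of copies of $N$. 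The point of finite generation in degrees $\leqslant n$ is that finite direct sums (as opposed to arbitrary products or sums) commute with the inverse limits over $\calc$ that appear below.

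First I would treat the coefficient ring $\Z/p^m\Z$. The strategy is induction on $m$, the base case $m=1$ being the key one. For $m = 1$: one shows that $\varprojlim_{U \in \calc} H_i(U; \Z/p\Z) = 0$ for $1 \leqslant i \leqslant n$ is equivalent to $\Tor^{\ZZ G}_i(\ZZ, \cgr{(\Z/p\Z)}{G}_\calc) = 0$ in that range. Here one uses Shapiro's lemma to identify $H_i(U;\Z/p\Z)$ with $H_i(G; (\Z/p\Z)[G/U])$, so that $\varprojlim_U H_i(U;\Z/p\Z)$ becomes $\varprojlim_U H_i\big(P_\bullet \otimes_{\ZZ G} (\Z/p\Z)[G/U]\big)$; since the chain groups in degrees $\leqslant n$ are finite direct sums of the finite sets $(\Z/p\Z)[G/U]$, the Mittag--Leffler condition holds automatically and $\varprojlim$ is exact on this part of the complex, giving $\varprojlim_U H_i(\cdots) \cong H_i\big(P_\bullet \otimes_{\ZZ G} \cgr{(\Z/p\Z)}{G}_\calc\big) = \Tor^{\ZZ G}_i(\ZZ, \cgr{(\Z/p\Z)}{G}_\calc)$ for $1 \leqslant i \leqslant n$. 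This handles $m=1$. For the inductive step, use the short exact sequence of coefficient modules $0 \to \Z/p^{m-1}\Z \to \Z/p^m\Z \to \Z/p\Z \to 0$ (note $\cgr{(-)}{G}_\calc$ is exact on finite modules since everything is finite in each stage of the limit and Mittag--Leffler is automatic), pass to the long exact sequence in $\Tor^{\ZZ G}_\bullet(\ZZ, -)$, and conclude by the inductive hypothesis together with the $m=1$ case; one needs to check the boundary maps behave, using that $\Tor_0^{\ZZ G}(\ZZ, \cgr{(\Z/p\Z)}{G}_\calc) = \cgr{(\Z/p\Z)}{G}_\calc \otimes_{\ZZ G} \ZZ$ surjects appropriately, exactly as in \cite{KochloukovaZalesskii2008}.

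Finally, for the $p$-adic statement, I would pass to the limit over $m$: $\cgr{\Z_p}{G}_\calc = \varprojlim_m \cgr{(\Z/p^m\Z)}{G}_\calc$, and again, because in degrees $\leqslant n$ the complex $P_\bullet \otimes_{\ZZ G} \cgr{(\Z/p^m\Z)}{G}_\calc$ consists of finite direct sums of the profinite modules $\cgr{(\Z/p^m\Z)}{G}_\calc$, the inverse system over $m$ is Mittag--Leffler (indeed the transition maps are surjective), so $\varprojlim_m$ commutes with homology in this range and $\Tor^{\ZZ G}_i(\ZZ, \cgr{\Z_p}{G}_\calc) \cong \varprojlim_m \Tor^{\ZZ G}_i(\ZZ, \cgr{(\Z/p^m\Z)}{G}_\calc) = 0$ for $1 \leqslant i \leqslant n$. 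The main obstacle, and the only place where the $\mathsf{FP}_n$ versus $\mathsf{FP}_\infty$ distinction genuinely matters, is ensuring that all the inverse-limit-commutes-with-homology arguments are confined to the range $i \leqslant n$ where the chain groups are finitely generated; I would state a small lemma at the outset recording that for a chain complex of profinite modules whose terms in degrees $\leqslant n$ are finite sums of profinite modules (hence the transition maps in the relevant inverse systems of cycles and boundaries are surjective, or at least the systems are Mittag--Leffler), $\varprojlim$ is exact in degrees $\leqslant n-1$ and commutes with $H_i$ for $i \leqslant n$, and then simply invoke it three times.
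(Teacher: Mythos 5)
Your overall strategy matches the paper's: choose a resolution finitely generated in degrees $\leqslant n$, handle $\Z/p^m\Z$ by induction on $m$ via a short exact sequence of coefficient rings, and then pass to the inverse limit over $m$ for $\Z_p$. The $m=1$ step you sketch is exactly \cite[Lemma~2.1]{KochloukovaZalesskii2008}, which the paper simply cites rather than reproving, but that is a stylistic difference.

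There is, however, a genuine gap at the final $p$-adic step, and it stems from your choice of short exact sequence. You use $0 \to \Z/p^{m-1}\Z \xrightarrow{\cdot p} \Z/p^m\Z \to \Z/p\Z \to 0$, whereas the paper uses $0 \to \Z/p\Z \to \Z/p^m\Z \to \Z/p^{m-1}\Z \to 0$, whose surjection is precisely the transition map of the inverse system $\varprojlim_m \cgr{(\Z/p^m\Z)}{G}_\calc$. Both short exact sequences carry the induction on $m$ equally well, but the paper's version does something extra: the tail $\cdots \to H_{n+1}(G;\cgr{(\Z/p^m\Z)}{G}_\calc) \to H_{n+1}(G;\cgr{(\Z/p^{m-1}\Z)}{G}_\calc) \to H_n(G;\cgr{(\Z/p\Z)}{G}_\calc) = 0$ shows that the transition maps on $H_{n+1}$ are surjective, i.e.\ the tower $\{H_{n+1}(P^{(m)}_\bullet)\}_m$ is Mittag--Leffler. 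That is exactly what is needed to kill the $\varprojlim^1$ term in the Weibel short exact sequence $0 \to \varprojlim^1_m H_{n+1}(P^{(m)}_\bullet) \to H_n(\varprojlim_m P^{(m)}_\bullet) \to \varprojlim_m H_n(P^{(m)}_\bullet) \to 0$. Your argument only establishes the Mittag--Leffler condition on the \emph{chain groups} $P_i \otimes_{\ZZ G}\cgr{(\Z/p^m\Z)}{G}_\calc$ (which is true and gives you the Weibel short exact sequence), but says nothing about $\varprojlim^1$ of the homology in degree $n+1$, where the chain group $P_{n+1}$ may not be finitely generated and the usual compactness or finiteness arguments do not apply. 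Your proposed lemma asserts that $\varprojlim$ commutes with $H_i$ for $i \leqslant n$, but that assertion is precisely what requires $\varprojlim^1 H_{n+1} = 0$, which you have not shown. Switching to the paper's short exact sequence closes this gap cleanly.
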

In both the statement above and the proof below, we stay in the abstract category, that is we do not require any continuity, and homology is taken without closing images.
\begin{proof}	
Let $P_\bullet$ be a projective resolution of $\ZZ$ over $\ZZ G$ such that $P_i$ is finitely generated for $i\leqslant n$.  Let 
${P}^{(m)}_\bullet = {\cgr{(\Z/ p^m\Z)}{G}_\calc}\otimes_{\Z G} P_\bullet$.  By \cite[Lemma~2.1]{KochloukovaZalesskii2008} we have \[H_i({P}^{(1)}_\bullet)\cong\Tor^{\ZZ G}_i(\ZZ,{\cgr{(\Z/ p \Z)}{G}_\calc})=0 \text{ for } 1\leqslant i\leqslant n.\]

The short exact sequence of right $\ZZ G$-modules
\[\begin{tikzcd}[column sep=0.8cm]
0 \arrow[r] & \cgr{(\Z/ p\Z)}{G}_\calc \arrow[r] & \cgr{(\Z/ p^m\Z)}{G}_\calc \arrow[r] & \cgr{(\Z/ p^{m-1}\Z)}{G}_\calc \arrow[r] & 0
\end{tikzcd} \]
induces a long exact sequence in homology containing sequences 
\[\begin{tikzcd}[column sep=0.3cm]
 H_i(G;\cgr{(\Z/ p\Z)}{G}_\calc) \arrow[r] & H_i(G;\cgr{(\Z/ p^m\Z)}{G}_\calc) \arrow[r] &  H_i(G;\cgr{(\Z/ p^{m-1}\Z)}{G}_\calc) 
\end{tikzcd} \]
exact in the middle term.
This latter sequence implies via an easy induction that 
\[
\Tor_i^{\ZZ G}(\ZZ,\cgr{(\Z/ p^m\Z)}{G}_\calc)=0 \text{ for } 1\leqslant i\leqslant n,
\] 
and so $P^{(m)}_\bullet$ is exact up to dimension $n$. It also shows that 
\[
H_{n+1}(G;\cgr{(\Z/ p^m\Z)}{G}_\calc) \to  H_{n+1}(G;\cgr{(\Z/ p^{m-1}\Z)}{G}_\calc)
\] is a surjection.

For every $m$ we have an obvious chain map $P^{(m+1)}_\bullet \to P^{(m)}_\bullet$. 
Let $Q_\bullet=\varprojlim_{m} P^{(m)}_\bullet$ where the limit is taken along these maps.  By \cite[Proposition 3.5.7 and Theorem~3.5.8]{Weibel1994}, the complex $Q_\bullet$ is exact up to dimension $n$ and by construction $Q_\bullet\cong \cgr{\Z_p}{G}_\calc \otimes_{\Z G} P_\bullet$. Therefore, 
\[
H_i(Q_\bullet)\cong\Tor_i^{\ZZ G}(\ZZ;\cgr{\Z_p}{G}_\calc)=0 \text{ for }1\leqslant i \leqslant n
\]
as required.
\end{proof}

The next result is due to Jaikin-Zapirain; we have weakened the original assumption of type $\mathsf{FP}_\infty$ to $\mathsf{FP}_n$.  The proof goes through verbatim after substituting \Cref{prop.directedFPn} for Jaikin-Zapirain's use of \cite[Theorem~2.5]{KochloukovaZalesskii2008}.

\begin{prop}\label{prop.ngood}\emph{\cite[Proposition~3.1]{JaikinZapirain2020}}
Let $G$ be a group of type $\mathsf{FP}_n(\Z)$ and let $(F_\bullet,\partial_\bullet)$ be a free resolution of the trivial $\ZZ G$-module $\ZZ$ which is finitely generated up to dimension $n$, and in which $F_0 = \Z G$.  Then $G$ is $n$-good if and only if the induced sequence 
\[ \begin{tikzcd}
\cdots \arrow[r,"\widehat{\partial_{n+1}}"] & \widehat{F_n} \arrow[r, "\widehat{\partial_n}"] & \cdots \arrow[r, "\widehat{\partial_2}"] & \widehat{F_1} \arrow[r, "\widehat{\partial_1}"] & \widehat{F_0} \arrow[r, "\widehat{\partial_0}"] & \widehat{\ZZ}
\end{tikzcd}\]
is exact up to dimension $n$, where $(\widehat{F}_\bullet,\widehat{\partial}_\bullet)$ is obtained from $(F_\bullet,\partial_\bullet)$ by tensoring with $\cgr{\widehat{\Z}} G$ over $\Z G$. 
\end{prop}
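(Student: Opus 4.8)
The plan is to transcribe Jaikin-Zapirain's proof of \cite[Proposition~3.1]{JaikinZapirain2020}, the only change being that wherever that proof invokes \cite[Theorem~2.5]{KochloukovaZalesskii2008} -- which is stated under the hypothesis of type $\mathsf{FP}_\infty$ -- we instead invoke \cref{prop.directedFPn}, which requires only type $\mathsf{FP}_n$. The argument rests on two translations. The first is homological: since $F_\bullet$ is a projective resolution of the trivial $\Z G$-module $\Z$, we have $H_i(\widehat F_\bullet)\cong\Tor^{\Z G}_i(\Z,\cgr{\widehat\Z}{G})$, and $\cgr{\widehat\Z}{G}=\varprojlim_m\cgr{\Z/m\Z}{G}$, so the assertion that the induced sequence is exact up to dimension $n$ is precisely the assertion that $\Tor^{\Z G}_i(\Z,\cgr{\widehat\Z}{G})=0$ for $1\le i\le n$. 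The second is the standard reformulation of goodness: a residually finite group $G$ is $n$-good if and only if $\varprojlim_{U\in\calc}H_i(U;\Z/p\Z)=0$ for every prime $p$ and every $1\le i\le n$, where $\calc$ runs over all finite-index normal subgroups of $G$ (the case $n=1$ being \cref{1-good}, the general case belonging to the circle of ideas of \cite{Grunewaldetal2008,KochloukovaZalesskii2008}).

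Granting these, the forward direction reads: if $G$ is $n$-good then $\varprojlim_U H_i(U;\Z/p\Z)=0$ for all $p$ and all $1\le i\le n$, so \cref{prop.directedFPn} (applied to the system of all finite-index normal subgroups, for each fixed prime $p$) gives $\Tor^{\Z G}_i(\Z,\cgr{\Z/p^m\Z}{G})=0$ in that range; since each $\cgr{\Z/m\Z}{G}$ is a finite product of the rings $\cgr{\Z/p^k\Z}{G}$ and $\cgr{\widehat\Z}{G}=\varprojlim_m\cgr{\Z/m\Z}{G}$, the Mittag--Leffler bookkeeping already carried out inside the proof of \cref{prop.directedFPn} (surjectivity of the transition maps, whence vanishing of $\varprojlim^1$ of the degree-$(n+1)$ homology) promotes this to $\Tor^{\Z G}_i(\Z,\cgr{\widehat\Z}{G})=0$ for $1\le i\le n$, i.e.\ to exactness of $\widehat F_\bullet$ up to dimension $n$. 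For the converse, one observes that once $\widehat F_\bullet$ is (abstractly) exact in degrees $1\le i\le n$ its images there coincide with kernels and hence are closed in the compact modules $(\cgr{\widehat\Z}{G})^{k_i}=\widehat F_i$; thus $\widehat F_\bullet$ is a partial free resolution of $\widehat\Z$ in the category of profinite $\cgr{\widehat\Z}{G}$-modules through degree $n$, and since $\Hom^{\mathrm{cont}}_{\cgr{\widehat\Z}{G}}(\widehat F_i,M)\cong\Hom_{\Z G}(F_i,M)$ for every finite module $M$ and every $i\le n$ (tensor--hom adjunction, using that $\widehat F_i$ is finitely generated free), computing continuous cohomology from this partial resolution yields $H^j_{\mathrm{cont}}(\widehat G;M)\cong H^j(G;M)$ for all $j\le n$, which is $n$-goodness. (One may also run the $\Tor$-side implications backwards, extracting $p$-parts and using Shapiro's lemma $\Tor^{\Z G}_i(\Z,(\Z/p^m\Z)(G/U))\cong H_i(U;\Z/p^m\Z)$ to recover the $\varprojlim$-vanishing, then re-applying the reformulation of goodness.)

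The step I expect to require the most care -- and the only one that differs in substance from Jaikin-Zapirain's $\mathsf{FP}_\infty$ argument -- is the top degree $i=n$. Because $F_\bullet$ is now finitely generated only through degree $n$, \cref{prop.directedFPn} delivers vanishing of $\Tor_i$ only for $i\le n$; one must check that this is genuinely enough, i.e.\ that the conclusion in degree $n$ follows from the $\varprojlim$/Mittag--Leffler argument of \cref{prop.directedFPn} without appealing to any finiteness in degree $n+1$, and that the partial-resolution computations of $H^n_{\mathrm{cont}}(\widehat G;M)$ and $H^n(G;M)$ only ever use $\widehat F_\bullet$ in degrees $\le n$ together with exactness \emph{at} $\widehat F_n$. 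Everything else is a faithful transcription of the cited proof.
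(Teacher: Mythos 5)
Your proposal matches the paper's own proof, which is literally the remark that Jaikin-Zapirain's argument for \cite[Proposition~3.1]{JaikinZapirain2020} goes through verbatim once \cite[Theorem~2.5]{KochloukovaZalesskii2008} is replaced by \cref{prop.directedFPn}; your opening sentence says exactly this, and the rest is a correct elaboration of what that transcription amounts to. The delicate point you flag -- that degree $n$ works because \cref{prop.directedFPn} only needs a surjectivity statement in degree $n+1$, and that the partial profinite resolution (with images closed because they coincide with kernels of continuous maps) suffices to compare $H^j$ for $j\leqslant n$ -- is indeed the content one must check, and you handle it correctly.
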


\subsection{Towards profinite fibring}
In this section our goal is to set up a correspondence between the characters of two profinitely isomorphic groups.  The key tool will be the $\epsilon$-pullbacks defined below which set up this `correspondence'.  We also recall a technical result of Liu which we will use later. 

First we need to introduce some notation. 
Recall that
\[
H_{n,R}^{\varphi,\alpha} = H_n\big( G; R(Q \times Z)\big)
\]
 where $\alpha\colon G\onto Q$ and $\varphi \colon G \to Z$ are homomorphisms, and $R(Q \times Z)$ is a right $RG$-module via $(q,z).g = (q\alpha(g), z\varphi(g))$ with $(g,q,z) \in G \times Q\times Z$. We also treat $R(Q \times Z)$ as an $RZ$ module via the inclusion $Z \to Q \times Z$.  

Now suppose that $Z \in \{ \Z, \widehat \Z\}$, so that $\widehat Z = \widehat \Z$. Let $\widehat G$ be the profinite completion of $G$, and let $\widehat \alpha \colon \widehat G \to Q$ and $\widehat \varphi \colon \widehat G \to \widehat \Z$ be the completions of the morphisms from before. Note that $Q = \widehat Q$ since $Q$ is finite. Let $R = \FF$ be a finite field. We let
\[
\widehat{H}_{n,\FF}^{\widehat \varphi,\widehat \alpha} = H_n^{\mathrm{prof}}(\widehat G; \cgr{\FF}{Q \times \widehat\Z})
\]
where $H_\ast^{\mathrm{prof}}$ denotes \emph{profinite homology}, as defined in \cite[\S6.3]{RibesZalesskii2010}. Observe that $\cgr{\FF}{Q \times \widehat\Z} = \cgr{\FF Q}{\widehat\Z}$ has a structure of an  $\cgr{\FF }{\widehat\Z}$ module, and hence so does $\widehat{H}_{n,\FF}^{\widehat \varphi,\widehat \alpha}$.

We now recall the technical result of Liu we need.

\begin{prop}\emph{\cite[Proposition~4.6]{Liu2020}}
\label{prop.ann.Liu}
	Let $G$ be a group which is 
	 $n$-good and of type $\mathsf{FP}_n(\Z)$.  Let $\FF$ be a finite field.  Let $\alpha\colon G\twoheadrightarrow Q$ be a finite quotient of $G$.  Denote by $\widehat{\alpha}\colon \widehat{G}\twoheadrightarrow Q$ the completion of $\alpha$.
	\begin{enumerate}
	\item \label{prop.ann.Liu1}
	Let $\varphi\colon  G\to \Z$ be a group homomorphism, and let $\widehat \varphi \colon \widehat G \to \widehat \Z$ denote its completion.
	If the	annihilator of ${H}_{n,\FF}^{ \varphi, \alpha}$ in $\FF \widehat \Z$ is non-zero, then the annihilator of  $\widehat{H}_{n,\FF}^{\widehat \varphi,\widehat \alpha}$
	is non-zero in $\cgr{\mathbb{F}}{\widehat\ZZ}$.
	
	\item \label{prop.ann.Liu2}
	Let $\varphi,\psi\colon G\to\widehat{\ZZ}$ be group homomorphisms and suppose that $\ker(\psi)$ contains $\ker(\varphi)$. If $H_{n, \FF}^{\psi,\alpha} $ has a non-zero annihilator in $\FF \widehat \Z$, then $H_{n,\FF}^{\varphi,\alpha}$ has a non-zero annihilator in $\FF \widehat \Z$.
	
	\item \label{prop.ann.Liu3}
	Let $\Gamma$ be a profinite group, let $\Psi\colon\Gamma\to\widehat{G}$ be a continuous epimorphism and let $\psi\colon  G\to \widehat{\ZZ}$ be a group homomorphism.  Let $\widehat{\alpha}'$ and $\widehat{\psi}'$ denote the pullbacks $\widehat\alpha\circ\Psi$ and $\widehat{\psi}\circ\Psi$. If $\widehat{H}_{n,\FF}^{\widehat \varphi',\widehat \alpha'}$ has a non-zero annihilator in  $\cgr \FF {\widehat\Z}$, then $\widehat{H}_{n,\FF}^{\widehat \varphi,\widehat \alpha}$ has a non-zero annihilator in $\cgr \FF {\widehat\Z}$.
	
	\item \label{prop.ann.Liu4}
	Let $\varphi\colon G\to\ZZ$ be a group homomorphism. The module $\widehat{H}_{n,\FF}^{\widehat \varphi,\widehat \alpha}$ has a non-zero annihilator in $\cgr \FF {\widehat\Z}$ if and only if ${H}_{n,\FF}^{\varphi,\alpha}$ has finite dimension over $\mathbb{F}$.
	\end{enumerate}
\end{prop}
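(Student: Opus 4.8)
The plan is to prove the four statements more or less in the order given, each time reducing a statement about the completed group ring $\cgr{\FF}{\widehat\Z}$ to a statement about the discrete module ${H}_{n,\FF}^{\varphi,\alpha}$, using $n$-goodness and type $\mathsf{FP}_n(\Z)$ as the bridge. For the backbone I would first record the identification $\widehat{H}_{n,\FF}^{\widehat \varphi,\widehat \alpha} \cong \cgr{\widehat\Z}{G} \otimes_{\Z G} (\text{finite part of a resolution})$-type statement: fix a free resolution $F_\bullet$ of the trivial $\Z G$-module which is finitely generated up to degree $n$ with $F_0 = \Z G$, apply \cref{prop.ngood} to see that $\widehat{F}_\bullet$ is exact up to dimension $n$, and hence that profinite homology $H_\ast^{\mathrm{prof}}(\widehat G; -)$ with completed coefficients is computed by $\widehat{F}_\bullet \otimes$ (appropriate module) in degrees $\leqslant n$. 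Since $Q$ is finite, $\cgr{\FF}{Q\times\widehat\Z} = \cgr{\FF Q}{\widehat\Z}$, and $\widehat{F_i}\otimes_{\widehat\Z G}\cgr{\FF Q}{\widehat\Z}$ is a finitely generated free $\cgr{\FF Q}{\widehat\Z}$-module, so $\widehat{H}_{n,\FF}^{\widehat \varphi,\widehat \alpha}$ is a finitely generated $\cgr{\FF Q}{\widehat\Z}$-module, hence finitely generated over the (Noetherian) ring $\cgr{\FF}{\widehat\Z} \cong \FF\llbracket\widehat\Z\rrbracket$. This Noetherianity is the main technical fact that makes "non-zero annihilator" equivalent to "finite $\FF$-dimension of the associated object" and it is where I expect to spend the most care.

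For \eqref{prop.ann.Liu1}: presenting $\widehat{H}_{n,\FF}^{\widehat \varphi,\widehat \alpha}$ as the $n$th homology of $\widehat{F_\bullet}\otimes \cgr{\FF Q}{\widehat\Z}$, the point is that tensoring the finite-rank free modules $F_n, F_{n+1}$ down and completing commutes with taking homology in degree $n$ because of the exactness in \cref{prop.ngood}; concretely, $\widehat{H}_{n,\FF}^{\widehat \varphi,\widehat \alpha} = \cgr{\FF Q}{\widehat\Z}\otimes_{\FF Q[t^{\pm1}]}$(a finitely presented module over $\FF Q[t^{\pm1}]$ that is built from $H_{n,\FF}^{\varphi,\alpha}$). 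If $H_{n,\FF}^{\varphi,\alpha}$ has a non-zero annihilator $0\neq \lambda\in\FF\widehat\Z$ — or rather, more carefully, if the relevant finitely presented module over $\FF[t^{\pm1}]$ has a non-zero annihilator — then $\lambda$ continues to annihilate after the flat (or at least annihilator-preserving) base change $\FF[t^{\pm1}]\hookrightarrow\cgr{\FF}{\widehat\Z}$, using that $\cgr{\FF}{\widehat\Z}$ contains $\FF[t^{\pm1}]$ and is a domain-free-of-zero-divisors-enough setting, so the image of $\lambda$ is non-zero. Statement \eqref{prop.ann.Liu2} is an elementary functoriality statement: a surjection $Z' \onto Z$ of the target groups (the condition $\ker\psi \supseteq \ker\varphi$ exactly says $\psi$ factors through a quotient hit by $\varphi$) induces a ring surjection $\FF\widehat\Z \onto \FF\widehat\Z$ on coefficients and a corresponding map of homology modules, along which a non-zero annihilator pulls back to a non-zero annihilator, again because the relevant rings are integral domains so nonzero elements stay nonzero; I would phrase this via the change-of-rings/Shapiro argument already used in the proof of \cref{untwisting}. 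For \eqref{prop.ann.Liu3}: a continuous epimorphism $\Psi\colon\Gamma\onto\widehat G$ induces, after tensoring a free resolution of $\Z$ over $\Z\Gamma$ against $\Z G$, a comparison map $H_\ast^{\mathrm{prof}}(\Gamma;-)\to H_\ast^{\mathrm{prof}}(\widehat G;-)$ and the pullback modules $\widehat{H}_{n,\FF}^{\widehat\varphi',\widehat\alpha'}$ surject onto (or at least map onto a submodule generating) $\widehat{H}_{n,\FF}^{\widehat\varphi,\widehat\alpha}$ in a way that carries annihilators forward; the cleanest route is to note that $\widehat{H}_{n,\FF}^{\widehat\varphi,\widehat\alpha}$ is a quotient of $\widehat{H}_{n,\FF}^{\widehat\varphi',\widehat\alpha'}$ up to the usual five-term exact sequence correction in low degrees, but since we only need a non-zero annihilator to survive, a surjection of $\cgr{\FF}{\widehat\Z}$-modules suffices and annihilators only grow under quotients.

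Finally, for \eqref{prop.ann.Liu4} — the one that is used downstream to connect to \cref{vanishing} — I would argue as follows. By Noetherianity of $R := \cgr{\FF}{\widehat\Z}$ and finite generation of $M := \widehat{H}_{n,\FF}^{\widehat\varphi,\widehat\alpha}$, the module $M$ has a non-zero annihilator in $R$ if and only if $M$ is a torsion $R$-module, i.e. $\Frac(R)\otimes_R M = 0$ (here $R = \FF\llbracket\widehat\Z\rrbracket$ is a domain). On the other side, $H_{n,\FF}^{\varphi,\alpha}$ is a finitely generated $\FF[t^{\pm1}]$-module, and by \cref{vanishing} it is finite-dimensional over $\FF$ precisely when it is $\FF[t^{\pm1}]$-torsion. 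So the claim becomes: $\widehat{H}_{n,\FF}^{\widehat\varphi,\widehat\alpha}$ is $R$-torsion $\iff$ $H_{n,\FF}^{\varphi,\alpha}$ is $\FF[t^{\pm1}]$-torsion. The "$\Leftarrow$" direction is exactly \eqref{prop.ann.Liu1} (torsion, being equivalent to non-zero annihilator for finitely generated modules over these rings, transports along $\FF[t^{\pm1}]\hookrightarrow R$). For "$\Rightarrow$", I would use that the completion map $\FF[t^{\pm1}]\to R$ is faithfully flat on the relevant completed localisations, or more robustly: if $H_{n,\FF}^{\varphi,\alpha}$ had positive rank over $\FF[t^{\pm1}]$, it would contain a free $\FF[t^{\pm1}]$-module of rank one up to torsion, which after applying $\widehat{(-)}$ and using the degree-$n$ exactness of \cref{prop.ngood} contributes a free rank-one $R$-summand (up to torsion) to $M$, contradicting $R$-torsion. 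Assembling these: $M$ has a non-zero annihilator $\iff$ $M$ is $R$-torsion $\iff$ $H_{n,\FF}^{\varphi,\alpha}$ is $\FF[t^{\pm1}]$-torsion $\iff$ $H_{n,\FF}^{\varphi,\alpha}$ is finite over $\FF$, which is the assertion. The main obstacle throughout is making the "completion commutes with homology in degree $n$" step precise — this is where $n$-goodness (via \cref{prop.ngood}), type $\mathsf{FP}_n(\Z)$, the finiteness of $Q$, and the Noetherianity of $\FF\llbracket\widehat\Z\rrbracket$ all have to be combined — and I would isolate it as a preliminary lemma before attacking the four numbered claims.
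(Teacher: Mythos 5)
The paper does not actually prove this proposition: it cites Liu's \cite[Proposition~4.6]{Liu2020} verbatim and observes that the only change needed to weaken ``cohomologically good, type $\mathsf{FP}_\infty$'' to ``$n$-good, type $\mathsf{FP}_n$'' is to replace the use of \cite[Proposition~3.1]{JaikinZapirain2020} inside Liu's proof with \cref{prop.ngood}. So the correct ``proof'' here is one sentence, not an independent derivation. You instead attempt a self-contained argument, and while the broad shape (use \cref{prop.ngood} to compute profinite homology by completing a finite free resolution; transport annihilators along base change and surjections; reduce the profinite statement to a torsion statement over $\FF[t^{\pm1}]$) does mirror what Liu does, there is a genuine gap in the ring theory you rely on.

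Concretely, you assert in part \eqref{prop.ann.Liu4} that $R := \cgr{\FF}{\widehat\Z}$ is a Noetherian domain, and you build the equivalence ``non-zero annihilator $\iff$ torsion $\iff$ $\Frac(R)\otimes_R M = 0$'' on that claim. But $\cgr{\FF}{\widehat\Z}$ is \emph{neither} a domain \emph{nor} Noetherian. Writing $\FF=\FF_q$ of characteristic $p$, one has $\cgr{\FF_q}{\widehat\Z}=\varprojlim_n \FF_q[x]/(x^n-1)$, and already for $n$ coprime to $p$ the ring $\FF_q[x]/(x^n-1)$ splits as a finite product of field extensions of $\FF_q$; passing to the limit yields an infinite product of complete local rings, hence non-Noetherian and with plenty of zero divisors. (The abstract group ring $\FF\widehat\Z$ \emph{is} a domain, because $\widehat\Z$ is torsion-free abelian, but that does not persist under completion.) This matters: your ``$\Frac(R)$'' and ``free rank-one $R$-summand'' arguments in \eqref{prop.ann.Liu4} do not make sense without extra work, and your appeals to ``domain-free-of-zero-divisors-enough'' in \eqref{prop.ann.Liu1} and to ``the relevant rings are integral domains'' in \eqref{prop.ann.Liu2}--\eqref{prop.ann.Liu3} are papering over the same issue (in particular, injectivity of $\FF\widehat\Z\hookrightarrow\cgr{\FF}{\widehat\Z}$ needs its own justification). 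In Liu's actual proof this is exactly the delicate point: he first establishes a structure theorem writing $\cgr{\FF}{\widehat\Z}$ as a topological product of complete local Noetherian domains (essentially complete DVRs) and then argues factor-by-factor, which is how the ``non-zero annihilator'' condition becomes tractable. Without that decomposition, your argument for \eqref{prop.ann.Liu4} does not go through.
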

Note that we have weakened the hypotheses `cohomologically good and type $\mathsf{FP}_\infty$' in \cite[Proposition~4.6]{Liu2020} to `$n$-good and type $\mathsf{FP}_n$'.  To make the adjustment we simply substitute the use of \cite[Proposition~3.1]{JaikinZapirain2020} in the proof of \cite[Proposition~4.6]{Liu2020} with our \Cref{prop.ngood}. 

\begin{defn}
Let $H_A$ and $H_B$ be a pair of finitely generated $\ZZ$-modules.  Let $\Phi \colon \widehat{H_A}\to\widehat{H_B}$ be a continuous homomorphism of the profinite completions.  We define the \emph{matrix coefficient module} 
\[\MC(\Phi;H_A,H_B)\]
 (or simply 
$\MC(\Phi)$ if there is no chance of confusion) for $\Phi$ with respect to $H_A$ and $H_B$ to be the smallest $\ZZ$-submodule $L$ of $\ZZhat$ such that $\Phi(H_A)$ lies in the submodule $H_B\otimes_\ZZ L$ of $\widehat{H_B}$.  We denote by 
\[\Phi^\MC\colon H_A\to H_B\otimes_\ZZ\MC(\Phi)\]
 the homomorphism uniquely determined by the restriction of $\Phi$ to $H_A$.
\end{defn}

By \cite[Proposition 3.2(1)]{Liu2020}, the $\Z$-module $\MC(\Phi;H_A,H_B)$ is a non-zero finitely generated free $\Z$-module.

Given two profinitely isomorphic groups, the next definition will give us a way to pullback homomorphisms to $\ZZ$ from one group to the other through their (shared) profinite completion.  When the $\TAP_n(\FF)$ property holds we will be able to verify whether the characters are fibred.  The purpose of the $\epsilon$ we define is to construct this pullback.

\begin{defn}
	\label{epsilon}
We define 	$\epsilon\in\Hom_\ZZ(\MC(\Phi),\Z)$ by picking a free basis for $\MC(\Phi)$ and sending every generator to either $0$ or $1$ in such a way that following $\epsilon$ with the natural projection $\Z \to \Z/2\Z$ coincides with the restriction of the natural projection $\widehat \Z \to \Z/2 \Z$ to $\MC(\Phi)$. The definition of $\epsilon$ depends on the choice of a basis for $\MC(\Phi)$.
	
The \emph{$\epsilon$-specialisation} of $\Phi$ refers to the composite homomorphism
\[\begin{tikzcd}
H_A \arrow[r, "\Phi^\MC"] & H_B\otimes_\ZZ\MC(\Phi) \arrow[r, "1\otimes\epsilon"] & H_B\otimes_\ZZ \Z = H_B,
\end{tikzcd}\]
denoted by $\Phi_\epsilon\colon H_A \to H_B$.  The \emph{dual $\epsilon$-specialisation} of $\Phi$ refers to the homomorphism $\Phi^\epsilon \colon \Hom_\Z(H_B, \Z) \to \Hom_\Z(H_A, \Z)$ precomposing with $\Phi_\epsilon$. 
\end{defn}

\begin{lem}
	\label{epsilon pullbacks}
If $\Phi$ is an isomorphism, then the images of $\Phi_\epsilon$ and $\Phi^\epsilon$ are of finite index in their respective codomains.
\end{lem}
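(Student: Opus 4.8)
The plan is to reduce the statement to the single assertion that the mod-$2$ reduction of $\Phi_\epsilon$ coincides with the mod-$2$ reduction of $\Phi$ itself, and then to read off both finite-index claims by elementary arguments about finitely generated abelian groups. Two preliminary observations will be used. First, since $\Phi$ is an isomorphism and, for a finitely generated abelian group $H$, one has $\widehat H = H \otimes_\ZZ \ZZhat$ (so that $\widehat H / m\widehat H \cong H/mH$ canonically for every $m \geqslant 1$), the map $\Phi$ induces an isomorphism $H_A/mH_A \to H_B/mH_B$ for all $m$, with $\Phi^{-1}$ inducing the inverse; taking $m$ to be a prime not dividing the order of the torsion subgroup of $H_A\oplus H_B$ yields $\rank H_A = \rank H_B =: r$. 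Second, recall from the excerpt that $\MC(\Phi)$ is a non-zero finitely generated free $\ZZ$-module, so $\epsilon$ is defined, and that, by construction, $\Phi^{\MC} \colon H_A \to H_B \otimes_\ZZ \MC(\Phi)$ followed by the map $1\otimes\iota\colon H_B \otimes_\ZZ \MC(\Phi) \to H_B \otimes_\ZZ \ZZhat = \widehat{H_B}$ induced by the inclusion $\iota \colon \MC(\Phi) \hookrightarrow \ZZhat$ equals the restriction $\Phi|_{H_A}$.

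The heart of the argument is the identity $\Phi_\epsilon \otimes_\ZZ \ZZ/2\ZZ = \Phi \otimes_\ZZ \ZZ/2\ZZ$ as maps $H_A/2H_A \to H_B/2H_B$; this is the only place the choice of $\epsilon$ enters, and it is tied to the prime $2$ only because $\epsilon$ is. I would apply the right-exact functor $-\otimes_\ZZ\ZZ/2\ZZ$ to the factorisation $\Phi_\epsilon = (1\otimes\epsilon)\circ\Phi^{\MC}$, using the canonical identification $(H_B \otimes_\ZZ \MC(\Phi)) \otimes_\ZZ \ZZ/2\ZZ = (H_B/2H_B)\otimes_{\ZZ/2\ZZ}(\MC(\Phi)/2\MC(\Phi))$ and the fact that $-\otimes_\ZZ\ZZ/2\ZZ$ is compatible with tensor products. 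Under this identification $1\otimes\epsilon$ becomes $1 \otimes \bar\epsilon$, where $\bar\epsilon \colon \MC(\Phi)/2\MC(\Phi) \to \ZZ/2\ZZ$ is the reduction of $\epsilon$, while $1\otimes\iota$ becomes $1\otimes\bar\iota$, where $\bar\iota \colon \MC(\Phi)/2\MC(\Phi) \to \ZZhat/2\ZZhat = \ZZ/2\ZZ$ is reduction mod $2$. But the defining property of $\epsilon$ is precisely that $\bar\epsilon = \bar\iota$. Hence the mod-$2$ reduction of $\Phi_\epsilon=(1\otimes\epsilon)\circ\Phi^{\MC}$ equals that of $(1\otimes\iota)\circ\Phi^{\MC} = \Phi|_{H_A}$, i.e. $\Phi\otimes\ZZ/2\ZZ$, which is an isomorphism by the first paragraph.

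To finish, I would argue as follows. Since $\Phi_\epsilon \otimes \ZZ/2\ZZ$ is surjective, right-exactness gives $\coker(\Phi_\epsilon)\otimes_\ZZ \ZZ/2\ZZ = 0$; a finitely generated abelian group equal to twice itself is finite, so $\coker(\Phi_\epsilon)$ is finite and $\im(\Phi_\epsilon)$ has finite index in $H_B$. Tensoring with $\QQ$, the map $\Phi_\epsilon\otimes\QQ \colon \QQ^r \to \QQ^r$ is then surjective, hence bijective; as $\Phi^\epsilon = \Hom_\ZZ(\Phi_\epsilon,\ZZ)$, the map $\Phi^\epsilon\otimes\QQ$ is the $\QQ$-linear transpose of $\Phi_\epsilon\otimes\QQ$, hence also bijective, so $\coker(\Phi^\epsilon)\otimes\QQ = 0$. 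Since $\coker(\Phi^\epsilon)$ is a quotient of the finitely generated free $\ZZ$-module $\Hom_\ZZ(H_A,\ZZ)$, it is therefore finite, and $\im(\Phi^\epsilon)$ has finite index in $\Hom_\ZZ(H_A,\ZZ)$.

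The only genuinely delicate point is the middle step: one must keep track of the several tensor products appearing in $\Phi_\epsilon\otimes\ZZ/2\ZZ$ and verify that the mod-$2$ reduction of $\epsilon$ is literally the reduction map $\MC(\Phi)\to\ZZhat/2\ZZhat$ built into its definition. Everything before and after is routine manipulation of finitely generated abelian groups together with exactness of $-\otimes\QQ$ and $-\otimes\ZZ/2\ZZ$.
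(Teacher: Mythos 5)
Your proof is correct and follows essentially the same route as the paper's: both hinge on the observation that the mod-$2$ reduction of $\Phi_\epsilon$ agrees with that of $\Phi$ (which the paper states ``by construction''---you helpfully unwind this from the defining property of $\epsilon$), and both deduce the $\Phi^\epsilon$ statement by passing to $\QQ$-coefficients and using equality of ranks. The only cosmetic difference is in the last step for $\Phi_\epsilon$: the paper lifts a basis of $(\Z/2\Z)^b$ and checks $\Z$-linear independence directly, whereas you use right-exactness to kill $\coker(\Phi_\epsilon)\otimes\Z/2\Z$ and then invoke the fact that a finitely generated abelian group with trivial mod-$2$ quotient is finite; these are equivalent.
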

\begin{proof}
Let $b$ denote the rank of $H_B$. We have a natural epimorphism $\rho \colon H_B \to (\Z / 2\Z)^b$ that extends to $\widehat \rho \colon H_B \otimes_\Z \widehat \Z \to (\Z/2\Z)^b$. By construction, $\rho \circ \Phi_\epsilon =  \widehat \rho \circ \Phi$. Let us assume that $\Phi$ is an isomorphism.
 Since $\widehat \rho$ is clearly surjective, we conclude that   $\rho \circ \Phi_\epsilon$ is surjective. Pick a basis of $(\Z/2\Z)^b$, and lift it via $\rho$ to a set $v_1, \dots, v_b \in \im \Phi_\epsilon$. Suppose that the elements $v_1, \dots, v_b$ are $\Z$-linearly dependent. By removing the common factors of $2$ from the coefficients, we may assume that we have
 \[
 \sum_{i=1}^b \lambda_i v_i =0
 \]
 with $\lambda_i \in \Z$ and with at least one $\lambda_i$ odd. Applying $\rho$ to this formula contradicts the fact that we started with a basis for $(\Z/2\Z)^b$. Hence $v_1, \dots, v_b$ are $\Z$-linearly independent, and hence by tensoring with $\QQ$ we see that $\im \Phi_\epsilon$ is of finite index in $H_B$.
 
 The result for $\Phi^\epsilon$ follows immediately, since we have just shown that $\Phi_\epsilon \otimes_\Z \id_\QQ$ is surjective, and hence an isomorphism, since $H_A$ and $H_B$ have the same rank.
\end{proof}

\begin{defn}
Let $G_A$ and $G_B$ be finitely generated groups and let $\Psi\colon \widehat{G}_A\to \widehat{G}_B$ be an isomorphism of profinite completions. Let $H_A$ and $H_B$ be the maximal torsion-free quotients of the abelianisations of, respectively,  $G_A$ and  $G_B$; let $\mathrm{ab}$ denote both of the free abelianisation maps. Note that $\Psi$ induces $\Psi_1  \colon \widehat{H_A} \to \widehat{H_B}$.
Pick  $\epsilon\in\Hom_\ZZ(\MC(\Psi_1),\ZZ)$ as in \cref{epsilon}.  Given $\varphi \in H^1(G_B;\Z)$ we define 
\[\psi= \Psi_1^\epsilon(\varphi\circ \mathrm{ab}^{-1}) \circ \mathrm{ab}\in H^1(G_A;\ZZ)\]
 to be the \emph{$\epsilon$-pullback} of $\varphi$.
\end{defn}

\subsection{The result}

\begin{thm}\label{prop.profinite.fibre}
	Let $n$ be a positive integer.
	Let $G_A$ and $G_B$ be $n$-good groups of type $\mathsf{FP}_n(\Z)$, and suppose that $G_B$ is in $\TAP_n(\FF)$, where $\FF$ is a finite  field.  Let $\Psi\colon\widehat{G}_A\to \widehat{G}_B$ be an isomorphism of profinite completions and let $\varphi\in H^1(G_B;\ZZ)$.  If for every $i \leqslant n$ an $\epsilon$-pullback $\psi\in H^1(G_A;\ZZ)$ of $\varphi$ has non-vanishing $i$th twisted  Alexander polynomials over $\FF$, then $\varphi$ is  $\mathsf{FP}_n(\FF)$-semi-fibred.
\end{thm}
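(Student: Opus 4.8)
The plan is a diagram chase combining $\TAP_n(\FF)$ with the vanishing criterion \cref{vanishing} and the profinite transfer statements of \cref{prop.ann.Liu}. Since $G_B\in\TAP_n(\FF)$, it suffices to prove that for every $i\leqslant n$ and every epimorphism $\alpha\colon G_B\onto Q$ onto a finite group the $i$th twisted Alexander polynomial of $\varphi$ twisted by $\alpha$ over $\FF$ is non-zero; as $\FF$ is its own field of fractions, \cref{vanishing} reformulates this as $H_{i,\FF}^{\varphi,\alpha}$ being finite-dimensional over $\FF$. We may assume $\varphi\neq0$: otherwise $\psi=\Psi_1^\epsilon(0)\circ\mathrm{ab}=0$, and the $0$th twisted Alexander polynomial of the trivial character always vanishes (its module is $H_0(G_A;\FF[t^{\pm1}])=\FF[t^{\pm1}]$), contradicting the hypothesis.

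Now fix $i\leqslant n$ and $\alpha$. Pulling back along $\Psi$ gives continuous maps $\widehat{\alpha}'=\widehat{\alpha}\circ\Psi\colon\widehat{G}_A\to Q$ and $\widehat{\varphi}'=\widehat{\varphi}\circ\Psi\colon\widehat{G}_A\to\widehat\Z$; their restrictions to $G_A$ are a finite quotient $\alpha'\colon G_A\onto Q$ and a homomorphism $\varphi'\colon G_A\to\widehat\Z$, with $\widehat{\alpha}'$ and $\widehat{\varphi}'$ their completions. The one genuinely new ingredient is the relation between $\varphi'$ and the $\epsilon$-pullback $\psi$ of $\varphi$. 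Fix a $\Z$-basis $l_1,\dots,l_r$ of the free module $\MC(\Psi_1)\subseteq\widehat\Z$ (free by \cite[Proposition~3.2(1)]{Liu2020}), and for $a\in G_A$ expand $\Psi_1^{\MC}(\mathrm{ab}(a))=\sum_k y_k\otimes l_k$ with $y_k\in H_B$. Unwinding the definitions of $\Phi_\epsilon$ and of the $\epsilon$-pullback gives
\[
\textstyle \varphi'(a)=\sum_k(\varphi\circ\mathrm{ab}^{-1})(y_k)\,l_k\in\widehat\Z,\qquad
\psi(a)=\sum_k(\varphi\circ\mathrm{ab}^{-1})(y_k)\,\epsilon(l_k)\in\Z .
\]
Since the $l_k$ are $\Z$-linearly independent in $\widehat\Z$, vanishing of $\varphi'(a)$ forces $(\varphi\circ\mathrm{ab}^{-1})(y_k)=0$ for all $k$, hence $\psi(a)=0$; thus $\ker\varphi'\subseteq\ker\psi$ as subgroups of $G_A$.

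The transfer chain then runs as follows. By hypothesis the $i$th twisted Alexander polynomial of $\psi$ over $\FF$ is non-zero, so by \cref{vanishing} the $\FF[t^{\pm1}]$-module $H_{i,\FF}^{\psi,\alpha'}$ is finite-dimensional over $\FF$, hence annihilated by some non-zero $p\in\FF[t^{\pm1}]=\FF\Z$. Regarding $\psi$ as valued in $\widehat\Z$ and using that $\FF\widehat\Z$ is free, hence flat, over $\FF\Z$, base change along $\FF\Z\hookrightarrow\FF\widehat\Z$ identifies $H_i(G_A;\FF[Q\times\widehat\Z])$ with $H_{i,\FF}^{\psi,\alpha'}\otimes_{\FF\Z}\FF\widehat\Z$, still annihilated by $p$; so this module (which is $H_{i,\FF}^{\psi,\alpha'}$ in the sense of \cref{prop.ann.Liu}, with $\psi$ now landing in $\widehat\Z$) has a non-zero annihilator in $\FF\widehat\Z$. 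Because $\ker\varphi'\subseteq\ker\psi$, \cref{prop.ann.Liu}\eqref{prop.ann.Liu2} transfers the non-zero annihilator to $H_{i,\FF}^{\varphi',\alpha'}$. Since $G_A$ is $n$-good of type $\mathsf{FP}_n(\Z)$, \cref{prop.ann.Liu}\eqref{prop.ann.Liu1} upgrades this to a non-zero annihilator of $\widehat{H}_{i,\FF}^{\widehat{\varphi}',\widehat{\alpha}'}$ in $\cgr{\FF}{\widehat\Z}$, and \cref{prop.ann.Liu}\eqref{prop.ann.Liu3}, applied with $\Gamma=\widehat{G}_A$ and the continuous epimorphism $\Psi$ (whose pullbacks of $\widehat\varphi$ and $\widehat\alpha$ are $\widehat{\varphi}'$ and $\widehat{\alpha}'$), moves it across to $\widehat{H}_{i,\FF}^{\widehat\varphi,\widehat\alpha}$. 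Finally, as $G_B$ is $n$-good of type $\mathsf{FP}_n(\Z)$ and $\varphi\colon G_B\to\Z$, \cref{prop.ann.Liu}\eqref{prop.ann.Liu4} turns this into $H_{i,\FF}^{\varphi,\alpha}$ being finite-dimensional over $\FF$, whence \cref{vanishing} gives the required non-vanishing. Since $i\leqslant n$ and $\alpha$ were arbitrary, $G_B\in\TAP_n(\FF)$ yields that $\varphi$ is $\mathsf{FP}_n(\FF)$-semi-fibred.

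All the homological and profinite substance is imported from \cref{vanishing} and \cref{prop.ann.Liu}, so the real work — and the main pitfall — is the bookkeeping of the second paragraph: identifying $\widehat\alpha\circ\Psi$ and $\widehat\varphi\circ\Psi$ with the completions of the restricted maps $\alpha'$, $\varphi'$ on $G_A$, and obtaining the inclusion $\ker\varphi'\subseteq\ker\psi$ in the correct direction, so that $\psi$ — which takes integer values and is therefore visible both to the hypothesis and to \cref{vanishing} — can occupy the ``source'' position in \cref{prop.ann.Liu}\eqref{prop.ann.Liu2}. The flat base change relating the $\Z$- and $\widehat\Z$-coefficient versions of the abstract twisted Alexander modules is the only other point that needs care.
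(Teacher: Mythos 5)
Your proof is correct and follows the same overall strategy as the paper's: reduce via $\TAP_n(\FF)$ to controlling the twisted Alexander polynomials of $\varphi$, pull back along $\Psi$ to $G_A$, observe that the kernel of the $\widehat\Z$-valued pullback $\varphi'=\widehat\varphi\circ\Psi|_{G_A}$ is contained in $\ker\psi$, upgrade the hypothesis on $\psi$ to an annihilator statement in $\FF\widehat\Z$, and then run through items \eqref{prop.ann.Liu2}, \eqref{prop.ann.Liu1}, \eqref{prop.ann.Liu3}, \eqref{prop.ann.Liu4} of \cref{prop.ann.Liu} in that exact order. Your kernel-inclusion argument (expanding $\Psi_1^{\MC}$ over a $\Z$-basis of $\MC(\Psi_1)$) is a computational unwinding of the paper's factorisation-diagram argument and establishes the same inclusion.

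The one place you genuinely diverge from the paper, and improve on it, is the passage from $\Z$-coefficients to $\widehat\Z$-coefficients. The paper achieves this by constructing a partial chain contraction over $\Frac(\FF\Z)Q$, clearing denominators to get a ``near-contraction'' over $\FF(\Z\times Q)$ whose defect is right-multiplication by some central $z\in\FF\Z$, and then extending these maps to $\FF(\widehat\Z\times Q)$. You instead note that $\FF\widehat\Z$ is free (via coset representatives of $\Z\leqslant\widehat\Z$), hence flat, over $\FF\Z$, so that $H_i(G_A;\FF[Q\times\widehat\Z])\cong H_{i,\FF}^{\psi,\alpha'}\otimes_{\FF\Z}\FF\widehat\Z$ inherits the annihilator $p$ directly. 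This flat base change replaces several paragraphs of bookkeeping with a one-line observation and is a legitimate simplification. Your explicit handling of the degenerate case $\varphi=0$ (which the paper leaves implicit) is also correct: if $\varphi=0$ then $\psi=0$, whose $0$th Alexander polynomial vanishes, contradicting the hypothesis.
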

\begin{proof}
	Note that $\Psi$ is continuous by the work of Nikolov--Segal \cite[Theorem~1.1]{NikolovSegal2007I} (see also \cite{NikolovSegal2007II} for the remainder of the proof). Let $\widehat\rho\colon G_A\to\widehat\ZZ$ denote the composite \[G_A\rightarrowtail \widehat{G}_A\xrightarrow{\Psi}\widehat{G}_B\xrightarrow{\widehat\varphi}\widehat\ZZ,\]
	 where $\widehat \varphi$ is the completion of $\varphi$.  Observe that $\Ker(\psi)$ contains $\Ker(\hat\rho)$. Indeed, $\hat\rho$ factorises as the top composite and $\psi$ as the bottom composite
	\[
	\begin{tikzcd}[column sep=0.4cm]
	& & & & & \ZZhat \\
	G_A \arrow[r] & H_A \arrow[r, "{\Psi_1}^\MC"] & H_B\otimes_\ZZ\MC(\Psi_1) \arrow[r, "\varphi\otimes 1"] & \ZZ\otimes_\ZZ\MC(\Psi_1) \arrow[r, "="] & \MC(\Psi_1) \arrow[ru, tail] \arrow[rd, "\epsilon"] &        \\
	& & & & & \ZZ,
	\end{tikzcd}
	\]
	so clearly $\psi$ vanishes on everything $\widehat\rho$ vanishes on.
	
	Let $\beta\colon G_B\twoheadrightarrow Q$  be a finite quotient with completion $\widehat \beta$, and let $\alpha\colon G_A\twoheadrightarrow Q$ denote the composite $G_A\rightarrowtail \widehat{G}_A\xrightarrow{\Psi}\widehat{G}_B\xrightarrow{\widehat\beta}Q$. Let $i\leqslant n$. By assumption, the homology group $H_{i,\FF}^{\psi,\alpha}$ is $\FF \Z$-torsion, and hence
	\[
	0 = \Frac( \FF \Z) \otimes_{\FF \Z} H_{i,\FF}^{\psi,\alpha}  = H_i(G_A; \Frac( \FF \Z)Q)
	\]
	for $i \leqslant n$, where the second equality comes from the fact that localisations are flat, and that $\Frac( \FF \Z)Q$ is the localisation of $\FF (\Z \times Q)$ at $\FF (\Z \times \{1\}) \smallsetminus \{0\}$.
	
	Since $G_A$ is of type $\mathsf{FP}_n(\Z)$, we  find a free resolution $C_\bullet$ of $\Z$ with each $C_i$, for $i\leqslant n$, a finitely generated $\Z G_A$-module; let $\partial_i \colon C_i \to C_{i-1}$ denote the differentials of $C_\bullet$.  The fact that $H_i(G_A; \Frac( \FF \Z)Q) = 0$ for all $i \leqslant n$ allows us to construct chain contractions, that is, $\Frac( \FF \Z)Q$-module maps 
	\[d_i \colon \Frac( \FF \Z)Q \otimes_{\Z G_A} C_i \to \Frac( \FF \Z)Q \otimes_{\Z G_A} C_{i+1}\]
	 for $i\leqslant n$ with
	\[
	d_{i-1} \circ \partial_i + \partial_{i+1} \circ d_i = \id,
	\]
	where we now view $\partial_i$ as $\id_{\Frac( \FF \Z)Q} \otimes \partial_i$ -- for details on how to build the chain contractions, see \cite[Section I.7]{Brown1982} or \cite[Section 2.2]{Turaev2001}. Since the modules \[\Frac( \FF \Z)Q \otimes_{\Z G_A} C_i\] are finitely generated, by multiplying the maps $d_i$ by the common denominator of all the entries of the matrices representing the maps $d_i$, we arrive at the existence of $\FF (\Z \times Q)$-module maps 	
	\[d'_i \colon \FF (\Z \times Q) \otimes_{\Z G_A} C_i \to \FF (\Z \times Q) \otimes_{\Z G_A} C_{i+1}\]
	with
	 	\[
	 d'_{i-1} \circ \partial_i + \partial_{i+1} \circ d'_i
	 \]
	 being equal to the right-multiplication by some \[z \in \FF (\Z \times \{1\}) \smallsetminus \{0\}.\]
	 Again, we have to interpret the differentials $\partial_i$ in a suitable way. Crucially, since $\FF \Z$ is central in $\FF (\Z \times Q)$, right-multiplication by $z$ coincides with left-multiplication by $z$.
	
	Let $ \psi' \colon G_A \to \widehat \Z$ denote $\psi$ followed by the natural embedding $\Z \to \widehat \Z$. The maps $d_i'$ can be easily extended to maps 
	\[\FF ( \widehat \Z \times Q) \otimes_{\Z G_A} C_i \to \FF (\widehat \Z \times Q) \otimes_{\Z G_A} C_{i+1}\]
	immediately yielding that 
 $H_{i,\FF}^{\psi',\alpha}$ is $\FF  \Z$-torsion, and hence $\FF \widehat \Z$-torsion. Still, $\ker (\widehat \rho) \leqslant \ker (\psi')$. Applying \Cref{prop.ann.Liu}\eqref{prop.ann.Liu2}, \eqref{prop.ann.Liu1}, \eqref{prop.ann.Liu3}, and \eqref{prop.ann.Liu4} in the given order, we see that $H_{i,\FF}^{\varphi,\beta}$ is a finite dimensional $\FF$-module, and hence a torsion $\FF \Z$-module.   Since $\beta$ was arbitrary and $G_B\in\TAP_n(\FF)$, it follows that $\varphi$ is $\mathsf{FP}_n(\FF)$-semi-fibred.
\end{proof}

\begin{cor}\label{Prop.pro.higherdim}
	Let $n$ be a positive integer.
	Let $G_A$ and $G_B$ be $n$-good groups of type $\mathsf{FP}_n(\Z)$  with isomorphic profinite completions. Suppose that $G_A$ lies in $\TAP_n(\FF)$, where $\FF$ is a finite  field. The group $G_A$ is $\mathsf{FP}_n(\FF)$-semi-fibred if $G_B$ is.
\end{cor}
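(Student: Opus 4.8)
The plan is to apply \cref{prop.profinite.fibre} with the roles of $G_A$ and $G_B$ interchanged, once we arrange the $\epsilon$-pullback of a suitable character of $G_A$ to be a positive multiple of a given $\mathsf{FP}_n(\FF)$-semi-fibred character of $G_B$. To set this up, fix an isomorphism $\Psi\colon\widehat{G_A}\to\widehat{G_B}$ and let $\Phi=(\Psi^{-1})_1\colon\widehat{H_B}\to\widehat{H_A}$ be the isomorphism it induces on the torsion-free parts of the abelianisations; here $H_A$ and $H_B$ are finitely generated free $\Z$-modules, since $G_A$ and $G_B$ are finitely generated. Choose $\epsilon\in\Hom_\Z(\MC(\Phi),\Z)$ as in \cref{epsilon}. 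The resulting notion of $\epsilon$-pullback sends a character $\chi\in H^1(G_A;\Z)$ to a character of $G_B$, and under the identifications $H^1(G_\bullet;\Z)\cong\Hom_\Z(H_\bullet,\Z)$ afforded by the free abelianisation maps this assignment is exactly the dual $\epsilon$-specialisation $\Phi^\epsilon\colon\Hom_\Z(H_A,\Z)\to\Hom_\Z(H_B,\Z)$. By \cref{epsilon pullbacks}, since $\Phi$ is an isomorphism, $\im\Phi^\epsilon$ is a finite-index subgroup of $\Hom_\Z(H_B,\Z)$.

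Now suppose $G_B$ is $\mathsf{FP}_n(\FF)$-semi-fibred and fix a non-trivial $\varphi\in H^1(G_B;\Z)$ realising this. As $\Hom_\Z(H_B,\Z)$ is a finitely generated free abelian group, the finite-index subgroup $\im\Phi^\epsilon$ contains $m\varphi$ for some integer $m\geqslant 1$, so there is a (necessarily non-trivial) character $\chi\in H^1(G_A;\Z)$ whose $\epsilon$-pullback equals $m\varphi$. Since $\Sigma^n(G_B;\FF)$ is a cone in $H^1(G_B;\RR)\smallsetminus\{0\}$, the character $m\varphi$ is again $\mathsf{FP}_n(\FF)$-semi-fibred; hence, applying \cref{semifibred so Alex} to $G_B$—which is of type $\mathsf{FP}_n(\Z)$ and therefore of type $\mathsf{FP}_n(\FF)$—the $i$th twisted Alexander polynomials of $m\varphi$ over $\FF$ are non-zero for all $i\leqslant n$. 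Equivalently, the $\epsilon$-pullback of $\chi$ has non-vanishing twisted Alexander polynomials over $\FF$ in every dimension $i\leqslant n$.

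It remains to read \cref{prop.profinite.fibre} with $G_B$ taking the place of $G_A$, with $G_A$ taking the place of $G_B$, with $\Psi^{-1}\colon\widehat{G_B}\to\widehat{G_A}$ taking the place of $\Psi$, and with $\chi\in H^1(G_A;\Z)$ taking the place of $\varphi$: the groups are $n$-good and of type $\mathsf{FP}_n(\Z)$, the group $G_A$ (now in the position requiring membership in $\TAP_n(\FF)$) lies in $\TAP_n(\FF)$, and the required non-vanishing of the twisted Alexander polynomials of the $\epsilon$-pullback was checked in the previous paragraph. The theorem therefore gives that $\chi$ is $\mathsf{FP}_n(\FF)$-semi-fibred, so $G_A$ admits an $\mathsf{FP}_n(\FF)$-semi-fibred character and is thus $\mathsf{FP}_n(\FF)$-semi-fibred. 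Beyond invoking \cref{prop.profinite.fibre} and \cref{epsilon pullbacks}, the only point requiring care is the finite-index gap between $\im\Phi^\epsilon$ and all of $\Hom_\Z(H_B,\Z)$, which is absorbed by the scalar $m$: replacing $\varphi$ by $m\varphi$ affects neither membership in $\Sigma^n(G_B;\FF)$ nor, by \cref{semifibred so Alex}, the non-vanishing of the twisted Alexander polynomials we need.
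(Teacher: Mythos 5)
Your argument is correct and is essentially the same as the paper's proof: you invoke \cref{epsilon pullbacks} to realise a positive integer multiple of a chosen semi-fibred character of $G_B$ as the $\epsilon$-pullback of a character of $G_A$, use \cref{semifibred so Alex} to get non-vanishing twisted Alexander polynomials, and then apply \cref{prop.profinite.fibre} with the roles of $G_A$ and $G_B$ interchanged. The only difference is cosmetic: you make explicit the finite-index/scalar $m$ bookkeeping that the paper compresses into the phrase "up to multiplication by a positive scalar."
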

\begin{proof}
	Let $\psi \colon G_B \to \Z$ be a non-trivial $\mathsf{FP}_n(\FF)$-semi-fibred character; observe that this statement remains unchanged if we replace $\psi$ by a positive scalar multiple. By \cref{semifibred so Alex}, the twisted Alexander polynomials of $\psi$ over $\FF$ never vanish. \cref{epsilon pullbacks} gives us a bijection between positive scalar multiples of characters in $H^1(G_A;\ZZ)$ and $H^1(G_B;\ZZ)$, and hence, in particular, we find a non-trivial character $\varphi \colon G_A \to \Z$ such that $\psi$ is its $\epsilon$-pullback (up to multiplication by a positive scalar). \cref{prop.profinite.fibre} shows that $\varphi$ is $\mathsf{FP}_n(\FF)$-semi-fibred.
\end{proof}

We may summarise the above by saying that being $\mathsf{FP}_n(\FF)$-semi-fibred is a profinite property among $n$-good groups of type $\mathsf{FP}_n(\Z)$ in $\TAP_n(\FF)$.

Using \cref{1-good} we obtain the following crisper formulation for $n=1$.

\begin{cor}
	\label{dim 1}
Let $G_A$ and $G_B$ be finitely generated groups  with isomorphic profinite completions. Suppose that $G_B$ lies in $\TAP_1(\FF)$, where $\FF$ is a finite  field. If $G_A$ is algebraically semi-fibred, then so is $G_B$.
\end{cor}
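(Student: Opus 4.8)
The plan is to deduce this directly from the machinery set up for higher $n$, namely \cref{Prop.pro.higherdim}, by observing that every hypothesis there is automatic when $n=1$. First I would note that $G_A$ and $G_B$, being finitely generated, are of type $\mathsf{FP}_1(\Z)$; and by \cref{1-good} every residually finite (indeed every) finitely generated group is $1$-good, so both groups satisfy the good-ness and finiteness requirements for $n=1$. Next I would unwind the definitions: $G_B$ being ``algebraically semi-fibred'' means precisely that $G_B$ admits a non-trivial character lying in $\Sigma^1(G_B) \cup -\Sigma^1(G_B)$, i.e.\ a non-trivial $\mathsf{FP}_1(\FF)$-semi-fibred character (recall from \cref{def semi-fibred} that $\mathsf{FP}_1$-semi-fibring is independent of the coefficient ring, so this is the same as being $\mathsf{FP}_1(\FF)$-semi-fibred). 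Likewise ``$G_A$ is algebraically semi-fibred'' is exactly ``$G_A$ is $\mathsf{FP}_1(\FF)$-semi-fibred''.

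With this translation in hand, the statement becomes: if $G_A$ and $G_B$ have isomorphic profinite completions, $G_A \in \TAP_1(\FF)$, both are $1$-good of type $\mathsf{FP}_1(\Z)$, and $G_B$ is $\mathsf{FP}_1(\FF)$-semi-fibred, then $G_A$ is $\mathsf{FP}_1(\FF)$-semi-fibred --- which is exactly \cref{Prop.pro.higherdim} with $n=1$. So the entire proof is: apply \cref{Prop.pro.higherdim} with $n = 1$, using \cref{1-good} to supply the $1$-goodness of both $G_A$ and $G_B$ and the observation that finite generation gives type $\mathsf{FP}_1(\Z)$, together with the ring-independence of $\mathsf{FP}_1$-semi-fibring noted in \cref{def semi-fibred}.

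There is essentially no obstacle here --- this corollary is a packaging statement, recording the $n=1$ case of the general theorem in a form that does not mention goodness or $\mathsf{FP}_n$ hypotheses (all of which trivialise) and uses the everyday language ``algebraically semi-fibred''. The only thing to be careful about is the bookkeeping around coefficient rings: one must invoke that $\Sigma^1$ is independent of $R$ (stated right after the definition of the BNS invariants) so that the $\FF$-coefficient conclusion of \cref{Prop.pro.higherdim} is the same as the ring-free notion of algebraic semi-fibring appearing in the statement. I would write the proof in two or three sentences accordingly.
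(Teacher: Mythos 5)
Your proposal is correct and takes precisely the route the paper does: the paper does not even write out a formal proof for \cref{dim 1}, but simply prefaces it with ``Using \cref{1-good} we obtain the following crisper formulation for $n=1$'', i.e.\ it is \cref{Prop.pro.higherdim} specialised to $n=1$, with \cref{1-good} supplying $1$-goodness, finite generation supplying $\mathsf{FP}_1(\Z)$, and the ring-independence of $\Sigma^1$ identifying $\mathsf{FP}_1(\FF)$-semi-fibring with algebraic semi-fibring --- exactly the bookkeeping you carried out.
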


\section{Applications}

\subsection{Products of LERF groups}

\begin{thm}\label{thm.profinite.LERF}
Let $G_A$ and $G_B$ be groups such that all of the following hold:
\begin{itemize}
	\item $G_A$ is finitely generated;
	\item $G_B$ is a product of LERF groups and is of type $\mathsf{FP}_2(R)$ for some commutative ring $R$;
	\item there is an isomorphism $\widehat {G_B} \to \widehat {G_A}$.
\end{itemize}
	If $G_A$ is algebraically semi-fibred, then $G_B$ is algebraically fibred.
\end{thm}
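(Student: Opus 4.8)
The plan is to combine \Cref{dim 1} with the structural results on products of LERF groups established earlier. First, observe that by \Cref{limit TAP infinity} — or more precisely by the combination of \Cref{LERF implies TAP} and \Cref{cor.products} — a finite product of LERF groups of type $\mathsf{FP}_2(R)$ lies in $\TAP_1(\FF)$ for every finite field $\FF$. Indeed, \Cref{cor.products} shows that a product of two $\TAP_1(\FF)$ groups is again $\TAP_1(\FF)$, and an easy induction extends this to any finite product; the hypothesis that $G_B$ is of type $\mathsf{FP}_2(R)$ for some ring $R$ passes to each factor (direct factors inherit $\mathsf{FP}_2$), so \Cref{LERF implies TAP} applies to each factor. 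Hence $G_B \in \TAP_1(\FF)$.

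Next, since $G_B$ has a finitely generated profinite completion (being isomorphic to $\widehat{G_A}$ with $G_A$ finitely generated), $G_B$ itself is finitely generated. So both $G_A$ and $G_B$ are finitely generated with isomorphic profinite completions, and $G_B \in \TAP_1(\FF)$. By \Cref{dim 1} (applied with the roles of $G_A$ and $G_B$ swapped relative to its statement), the hypothesis that $G_A$ is algebraically semi-fibred forces $G_B$ to be algebraically semi-fibred as well: $G_B$ admits a character $\varphi$ lying in $\Sigma^1(G_B) \cup -\Sigma^1(G_B)$.

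Finally, to upgrade from \emph{semi}-fibred to \emph{fibred}, I invoke the symmetry of the first BNS invariant for products of LERF groups. By \Cref{LERF symmetric BNS}, each LERF factor $G_i$ of $G_B$ has $\Sigma^1(G_i) = -\Sigma^1(G_i)$; Meinert's inequality (in the field case, where it is an equality — or directly from the join description of $\Sigma^1$ of a product) then shows $\Sigma^1(G_B) = -\Sigma^1(G_B)$, since a join of symmetric sets is symmetric and the complement of a symmetric set is symmetric. Therefore $\varphi \in \Sigma^1(G_B) \cup -\Sigma^1(G_B) = \Sigma^1(G_B)$, and replacing $\varphi$ by $-\varphi$ if necessary we get $\varphi \in \Sigma^1(G_B) \cap -\Sigma^1(G_B)$, so $\ker \varphi$ is finitely generated and $G_B$ is algebraically fibred.

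The only genuine obstacle is the bookkeeping in the first paragraph: checking that "product of LERF groups, type $\mathsf{FP}_2(R)$" really does put $G_B$ into $\TAP_1(\FF)$ for the \emph{finite} field $\FF$ needed by \Cref{dim 1}, and that finiteness of the number of factors is available (it is, since "product of LERF groups" in this paper means a finite product). Everything else is a direct citation of \Cref{dim 1}, \Cref{LERF symmetric BNS}, and Meinert's inequality.
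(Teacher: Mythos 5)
Your proof is essentially the paper's own argument: first $G_B\in\TAP_1(\FF)$ via \cref{LERF implies TAP} and \cref{cor.products}, then \cref{dim 1} gives algebraic semi-fibring of $G_B$, and finally the symmetry $\Sigma^1(G_B)=-\Sigma^1(G_B)$ (via \cref{LERF symmetric BNS} and Meinert's inequality over a field, recalling that $\Sigma^1$ is independent of the coefficient ring) upgrades semi-fibring to fibring. One minor inaccuracy to fix: you justify that $G_B$ is finitely generated by observing that $\widehat{G_B}\cong\widehat{G_A}$ is topologically finitely generated. That inference is false in general — for instance $\ZZ[1/p]$ is not finitely generated, yet its profinite completion $\prod_{q\neq p}\ZZ_q$ is topologically cyclic. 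The correct (and simpler) justification, which you in any case already need for the factors of $G_B$ in order to invoke \cref{cor.products}, is that type $\mathsf{FP}_2(R)$ implies type $\mathsf{FP}_1(R)$, which is equivalent to finite generation. With that repair the proof is correct and structurally identical to the paper's.
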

\begin{proof}
The group $G_B$ is in $\TAP_1(\FF)$ for every finite  field $\FF$ by \cref{LERF implies TAP} and \cref{cor.products} -- we are also using the fact that each of the factors of $G_B$ is itself of type $\mathsf{FP}_2(R)$, which is easy to see. Now we use \cref{dim 1} and see that $G_B$ is algebraically semi-fibred.  But the first BNS invariant of LERF groups is symmetric by \cref{LERF symmetric BNS}.  It follows from Meinert's inequality that products of LERF groups also have symmetric first BNS invariant, and hence that $G_B$ is algebraically fibred.
\end{proof}

The following is restating \cref{prod limit profinite intro} from the introduction.

\begin{thm}
	\label{prod limit profinite}
Let $\FF$ be a finite  field.  Let $G_A$ and $G_B$ be profinitely isomorphic finite products of limit groups.  The group $G_A$ is $\mathsf{FP}_n(\FF)$-semi-fibred if and only if $G_B$ is.
\end{thm}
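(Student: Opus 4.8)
The plan is to combine the higher-dimensional profinite transfer result \cref{Prop.pro.higherdim} with the fact, established in \cref{limit TAP infinity}, that finite products of limit groups lie in $\TAP_\infty(\FF)$ for every field $\FF$. First I would check that finite products of limit groups satisfy the hypotheses of \cref{Prop.pro.higherdim} for every $n$: by \cite{Wilton2008} limit groups are LERF and by \cite[Exercise 13]{BestvinaFeighn2009} they are of type $\mathsf{F}$, hence of type $\mathsf{FP}_n(\Z)$ for all $n$, and a finite product of such groups is again of type $\mathsf{F}$, so of type $\mathsf{FP}_n(\Z)$ for all $n$. For the goodness hypothesis, limit groups are cohomologically good — this is a theorem of Grunewald--Jaikin-Zapirain--Zalesskii \cite{Grunewaldetal2008} — and goodness is preserved under finite direct products, so $G_A$ and $G_B$ are $n$-good for every $n$. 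Since $\FF$ is finite and each factor is $\TAP_\infty(\FF)$ by \cref{limit TAP infinity}, both $G_A$ and $G_B$ lie in $\TAP_n(\FF)$ for every $n$.

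Next, fix $n$ and suppose $G_B$ is $\mathsf{FP}_n(\FF)$-semi-fibred. Since $G_A$ is $n$-good, of type $\mathsf{FP}_n(\Z)$, lies in $\TAP_n(\FF)$, and has profinite completion isomorphic to that of $G_B$, \cref{Prop.pro.higherdim} immediately gives that $G_A$ is $\mathsf{FP}_n(\FF)$-semi-fibred. By symmetry of the hypotheses (both groups being finite products of limit groups), interchanging the roles of $G_A$ and $G_B$ yields the converse, so $G_A$ is $\mathsf{FP}_n(\FF)$-semi-fibred if and only if $G_B$ is.

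The only point requiring care is the goodness claim: one must verify that limit groups are cohomologically good and that goodness of all factors implies goodness of the product. The latter is a standard Lyndon--Hochschild--Serre / Künneth argument comparing the (continuous) cohomology of $\widehat{G_A} \times \widehat{G_B}$ with that of $G_A \times G_B$ using finite modules, combined with \cref{good is virtual} to reduce to the relevant finite-index subgroups if needed; the former is in the literature. Beyond this, the argument is a direct citation of \cref{Prop.pro.higherdim} and \cref{limit TAP infinity}, with no further obstacle. (Note that, unlike \cref{thm.profinite.LERF}, we do not claim to upgrade ``semi-fibred'' to ``fibred'' here; the statement is about $\mathsf{FP}_n(\FF)$-semi-fibring on both sides.)
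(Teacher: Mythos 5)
Your proposal is correct and follows the same route as the paper: combine \cref{limit TAP infinity} with \cref{Prop.pro.higherdim}, after verifying type $\mathsf{F}$ and cohomological goodness of the factors and of the product. The only difference is that the paper cites Lorensen~\cite[Theorem~2.5]{Lorensen2008} for the fact that a finite product of cohomologically good finitely generated groups is again good, rather than sketching the Lyndon--Hochschild--Serre/K\"unneth argument as you do.
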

\begin{proof}
By \Cref{limit TAP infinity}, finite products of limit groups are $\TAP_\infty(\FF)$; they are also of type $\mathsf F$, as mentioned before. The result now follows from  \Cref{Prop.pro.higherdim}. Indeed, limit groups are cohomologically good by \cite[Theorem~1.3]{Grunewaldetal2008} and so products of them are cohomologically good by \cite[Theorem~2.5]{Lorensen2008}.
\end{proof}

\subsection{Poincar\'e duality groups}

We now turn our attention to $\mathsf{PD}_{3}$-groups, that is, Poincar\'e duality groups in dimension $3$. For an introduction to this topic, see \cite{Hillman2020}.

\begin{thm}\label{thm.PD3-3manifold.profinite}
Let $G_A$ be a $\mathsf{PD}_3$-group in $\TAP_1(\FF)$ for some finite  field $\FF$. Let $G_B$ be a finitely generated algebraically fibred group.
If $\widehat{G_A}\cong\widehat{G_B}$, then $G_A$ is the   
fundamental group of a closed connected $3$-manifold $M$.  Moreover, $M$ is a mapping torus of a compact surface.  
\end{thm}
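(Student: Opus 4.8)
The plan is to transfer algebraic semi-fibring from $G_B$ to $G_A$ using the profinite machinery already developed, and then to use Poincar\'e duality to upgrade it to genuine fibring with a surface-group kernel. First, a $\PD_3$-group is of type $\mathsf{FP}$, hence finitely generated, so $G_A$ is; and $G_B$, being algebraically fibred, is \emph{a fortiori} algebraically semi-fibred. Since $G_A\in\TAP_1(\FF)$ and $\widehat{G_A}\cong\widehat{G_B}$, \cref{dim 1} shows that $G_A$ is algebraically semi-fibred. Replacing a character by its negative if necessary, we obtain an epimorphism $\varphi\colon G_A\onto\Z$ with $\varphi\in\Sigma^1(G_A)$, and by \cref{HNN criterion} an identification of $G_A$ with a descending HNN extension $B\ast_\iota$ over a finitely generated base $B$, in which $\varphi$ is the projection onto $\Z$.

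The crucial step is to show that $\iota$ is onto --- equivalently that $\ker\varphi=B$ is finitely generated, equivalently (Bieri--Renz) that $-\varphi\in\Sigma^1(G_A)$ --- and moreover that $\ker\varphi$ is of type $\mathsf{FP}$. This is the only place where the $\PD_3$ hypothesis does real work, since the profinite input only ever produces semi-fibring. The route I would take is Poincar\'e duality: one feeds Sikorav's characterisation of $\Sigma^1(G_A)$ by the vanishing of first Novikov homology through the duality isomorphisms $H_i(G_A;M)\cong H^{3-i}(G_A;M)$ (twisted by the orientation character of $G_A$ when it is non-trivial) and the reversed-valuation isomorphism $H^k(G_A;\widehat{\Z G_A}^{\varphi})\cong H_k(G_A;\widehat{\Z G_A}^{-\varphi})$, valid since $G_A$ is of type $\mathsf{FP}$; combined with $\chi(G_A)=0$ this yields $\Sigma^1(G_A)=-\Sigma^1(G_A)$, and in fact $\varphi\in\Sigma^2(G_A;\Z)\cap-\Sigma^2(G_A;\Z)$, so that $\ker\varphi$ is of type $\mathsf{FP}_2$; since $\cd(\ker\varphi)\leqslant 2$ by Strebel's theorem on subgroups of infinite index in Poincar\'e duality groups, $\ker\varphi$ is then of type $\mathsf{FP}$. (Alternatively one can invoke the structure theory of $\PD_3$-groups carrying a finitely generated normal subgroup of infinite index.) I expect the delicate point to be the symmetry $\varphi\in\Sigma^1(G_A)\Rightarrow-\varphi\in\Sigma^1(G_A)$: the Euler-characteristic bookkeeping sketched above is immediate when $\widehat{\Z G_A}^{-\varphi}$ carries a faithful rank function, and otherwise one has to argue directly, running Mayer--Vietoris for the Bass--Serre tree of $B\ast_\iota$ against $H^\ast(G_A;\Z G_A)$ to exclude a proper ascent; this is the main obstacle.

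Finally, $G_A\cong K\rtimes_\phi\Z$ with $K=\ker\varphi$ of type $\mathsf{FP}$ and $\cd K=2$ (the lower bound coming from $\cd G_A\leqslant\cd K+1$). Applying Bieri's theorem on normal subgroups of Poincar\'e duality groups to the extension $1\to K\to G_A\to\Z\to 1$, in which $K$ and $\Z$ are of type $\mathsf{FP}$ and $\Z$ is a $\PD_1$-group, $K$ is a $\PD_2$-group, and hence by the classification of $\PD_2$-groups (Eckmann--M\"uller, or Bowditch) the fundamental group of a closed surface $\Sigma$. By the Dehn--Nielsen--Baer theorem $\phi$ agrees, up to an inner automorphism, with the automorphism $f_\ast$ induced by a self-homeomorphism $f$ of $\Sigma$, so $G_A\cong\pi_1(\Sigma)\rtimes_{f_\ast}\Z$ is the fundamental group of the mapping torus of $f$ --- a closed connected $3$-manifold, orientable precisely when $G_A$ has trivial orientation character. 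This proves the theorem.
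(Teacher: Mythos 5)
Your overall architecture matches the paper's: transfer semi-fibring to $G_A$ via \cref{dim 1}, upgrade to genuine fibring using the symmetry $\Sigma^1(G_A)=-\Sigma^1(G_A)$, invoke Strebel to bound $\cd(\ker\varphi)$, conclude $\ker\varphi$ is a $\PD_2$-group and hence a closed surface group by Eckmann--M\"uller, and finish with Dehn--Nielsen--Baer to produce a mapping torus. So far so good.

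The place where you and the paper diverge is precisely the step you flag as ``the main obstacle,'' namely establishing $\Sigma^1(G_A)=-\Sigma^1(G_A)$. The paper does not re-derive this: it cites \cite[Theorem~5]{Hillman2020b} together with \cref{HNN criterion}. That result says in effect that a $\PD_3$-group cannot be a properly ascending (or descending) HNN extension over a finitely generated base, which is exactly the exclusion you say you would have to ``argue directly, running Mayer--Vietoris for the Bass--Serre tree.'' Your instinct that Poincar\'e duality forces the symmetry is right, but the sketch you offer is not yet a proof: the asserted ``reversed-valuation isomorphism'' $H^k(G_A;\widehat{\Z G_A}^{\varphi})\cong H_k(G_A;\widehat{\Z G_A}^{-\varphi})$ is not a formal universal-coefficient statement over the (noncommutative, non-self-dual) Novikov ring, and the appeal to $\chi(G_A)=0$ does not on its own rule out a proper ascent --- you would indeed have to carry out the Mayer--Vietoris/Bass--Serre argument, which is essentially Hillman's proof. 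Citing that reference closes the gap and is what the paper does. The subsidiary claim that $\varphi\in\Sigma^2(G_A;\Z)\cap-\Sigma^2(G_A;\Z)$ is likewise unjustified as written, but it is also unnecessary: once $\ker\varphi$ is finitely generated, Strebel already gives $\cd(\ker\varphi)\leqslant 2$, and the standard result (Bieri/Strebel, as explained in \cite[Theorem~1.19]{Hillman2002}) then yields $\PD_2$, exactly as in the paper.

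In short: same route, correct structural instinct, but the crucial BNS-symmetry step is left as an acknowledged gap where the paper cites Hillman. Fill that citation and the remainder of your argument goes through.
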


\begin{proof}
By \cite[Theorem~5]{Hillman2020b} and \cref{HNN criterion} we have that $\Sigma^1(G_A)=-\Sigma^1(G_A)$.  Indeed, Hillman's result tells us that any ascending HNN extension splitting of $G$ with finitely generated base group $N$ must be a semidirect product $G \cong N\rtimes\Z$.   By \cref{dim 1}, $G_A$ is algebraically fibred. Hence, 
\[
G_A = K \rtimes \Z 
\]
for some finitely generated subgroup $K$.
It follows from a result of Strebel \cite{Strebel1977} (see \cite[Theorem~1.19]{Hillman2002} for an explanation), that $K$ has cohomological dimension at most $2$ and hence is a $\mathsf{PD}_2$-group.   In particular, by \cite{EckmannMueller1980} (see also \cite{KielakKropholler2021}) the group $K$ is isomorphic to the fundamental group of a closed surface.  Since every outer automorphism of $K$ is realised by a mapping class of the underlying surface by the Dehn--Nielsen--Baer theorem, we conclude that $G_A$ is the  fundamental group of a closed connected $3$-manifold $M$, namely the mapping torus of a compact surface with fundamental group $K$.
\end{proof}

The following is restating \cref{pd3 intro} from the introduction.

\begin{corollary}
	Let $G_A$ be a   LERF  $\mathsf{PD}_3$-group. Let $G_B$ be the fundamental group of a closed  connected hyperbolic $3$-manifold.
If $\widehat{G_A}\cong\widehat{G_B}$, then $G_A$ is the   
fundamental group of a closed connected hyperbolic $3$-manifold.
\end{corollary}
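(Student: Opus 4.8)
The plan is to derive this from \cref{thm.PD3-3manifold.profinite}, which however requires an algebraically fibred model for the profinite completion; one therefore cannot feed it $G_B = \pi_1(M)$ directly, since a closed hyperbolic $3$-manifold group need not be algebraically fibred and may even have $b_1 = 0$. Instead I would pass to a finite cover. First record that $G_A$, being LERF and a $\mathsf{PD}_3$-group (hence of type $\mathsf{FP}$ over $\ZZ$, in particular of type $\mathsf{FP}_2(\ZZ)$), lies in $\TAP_1(\FF)$ for every finite field $\FF$ by \cref{LERF implies TAP}; the same applies to every finite-index subgroup of $G_A$, as these are again LERF $\mathsf{PD}_3$-groups. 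By Agol's virtual fibring theorem \cite{Agol2013}, $M$ admits a finite cover $M' \to M$ fibring over the circle; put $H_B = \pi_1(M')$, a finitely generated algebraically fibred finite-index subgroup of $G_B$, and let $H_A \leqslant G_A$ be the corresponding finite-index subgroup under the given isomorphism $\widehat{G_A} \cong \widehat{G_B}$, so that $\widehat{H_A} \cong \widehat{H_B}$. Then \cref{thm.PD3-3manifold.profinite} applies to the pair $(H_A, H_B)$ and shows that $H_A \cong \pi_1(N)$ for a closed connected $3$-manifold $N$; inspecting the proof of that theorem, $N$ may be taken to be a surface bundle over the circle, in particular closed and aspherical.

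Next I would determine the geometry of $N$. Since $\widehat{\pi_1(N)} \cong \widehat{\pi_1(M')}$ and $M'$ is closed hyperbolic, and since hyperbolicity of closed aspherical $3$-manifolds is detected by the profinite completion of the fundamental group (by a theorem of Wilton and Zalesskii \cite{WiltonZalesskii2017}), the manifold $N$ is hyperbolic. Thus $H_A$ is, up to isomorphism, a torsion-free cocompact lattice in $\Isom(\mathbb{H}^3)$.

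It remains to transfer this back to $G_A$. Replacing $H_A$ by its normal core we may assume $H_A$ is normal of finite index in $G_A$ and still a closed hyperbolic $3$-manifold group. Conjugation yields a homomorphism $G_A \to \Aut(H_A)$ whose kernel $C_{G_A}(H_A)$ meets $H_A$ in $Z(H_A) = 1$, hence is finite, hence trivial since $G_A$ is torsion-free; so $G_A \hookrightarrow \Aut(H_A)$. By Mostow--Prasad rigidity every automorphism of $H_A$ is induced by conjugation by an element of $\Isom(\mathbb{H}^3)$, and since $H_A$ has trivial centraliser in $\Isom(\mathbb{H}^3)$ this identifies $\Aut(H_A)$ with the normaliser of $H_A$ in $\Isom(\mathbb{H}^3)$; the latter contains $H_A$ with finite index, hence is a cocompact lattice. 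Therefore $G_A$ embeds as a torsion-free cocompact lattice in $\Isom(\mathbb{H}^3)$, so $G_A \cong \pi_1(\mathbb{H}^3/G_A)$ with $\mathbb{H}^3/G_A$ a closed connected hyperbolic $3$-manifold. I expect this final de-virtualisation to be the main obstacle: \cref{thm.PD3-3manifold.profinite} inherently produces only a statement about a finite-index subgroup — and, as explained above, one is forced to pass to one — so recognising $G_A$ itself as a lattice genuinely needs the Mostow-rigidity argument; the one other non-routine ingredient is the profinite detection of hyperbolicity among $3$-manifolds used in the middle step.
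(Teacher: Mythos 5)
Your proof is correct, and the opening steps coincide with the paper's: use Agol's theorem to pass to a finite-index algebraically fibred subgroup $H_B \leqslant G_B$, note that the corresponding $H_A \leqslant G_A$ is again a LERF $\mathsf{PD}_3$-group and hence $\TAP_1(\FF)$, and apply \cref{thm.PD3-3manifold.profinite} to $(H_A, H_B)$ to realise $H_A$ as a closed aspherical $3$-manifold group. Where you diverge is in the order and method of the final two steps. The paper de-virtualises first: it cites \cite[Lemma~8.2]{Hillman2020}, which says that a $\mathsf{PD}_3$-group with a finite-index $3$-manifold subgroup is itself a $3$-manifold group, producing a closed manifold $M$ with $\pi_1(M) = G_A$, and only then invokes Wilton--Zalesskii \cite{WiltonZalesskii2017} to see that $M$ is hyperbolic (using $\widehat{G_A} \cong \widehat{G_B}$ directly). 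You instead apply Wilton--Zalesskii at the level of the finite-index subgroups to see $N$ is hyperbolic, and then de-virtualise by hand using Mostow--Prasad rigidity together with the trivial-centraliser and torsion-free arguments to embed $G_A$ as a cocompact lattice in $\Isom(\mathbb{H}^3)$. Both routes are sound; the paper's is shorter because it outsources the de-virtualisation to Hillman's lemma, while yours is more self-contained and makes the rigidity mechanism explicit. Your anticipation that the de-virtualisation is the crux is accurate — it is exactly the step for which the paper reaches for a nontrivial cited result.
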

\begin{proof}
As in the proof of the previous theorem, by \cite[Theorem~5]{Hillman2020b} and \cref{HNN criterion}, for every finite index subgroup $G_A'\leqslant G_A$ we have that $\Sigma^1(G_A')=-\Sigma^1(G_A')$.  Let $H_B$ be a finite index subgroup of $G_B$ that is algebraically fibred -- the existence of such a subgroup is guaranteed by Agol's theorem \cite{Agol2013}. Let $H_A$ be the corresponding finite index subgroup of $G_A$; we still have $\widehat{H_A} \cong \widehat{H_B}$. The group $H_A$ is still a $\mathsf{PD}_3$-group by \cite[Theorem 2]{JohnsonWall1972}. It is immediate that $H_A$ is LERF. Since all  $\mathsf{PD}_3$-groups are of type $\mathsf{FP}(\Z)$, we conclude, using \cref{LERF implies TAP}, that $H_A$ is $\TAP_1(\FF)$ for every finite  field.  \cref{thm.PD3-3manifold.profinite} now shows that $H_A$ is the fundamental group of a connected compact $3$-manifold. By  \cite[Lemma~8.2]{Hillman2020}, the group $G_A$ is also a fundamental group of a connected compact $3$-manifold $M$.
The manifold $M$ is hyperbolic by \cite{WiltonZalesskii2017}.
\end{proof}

\subsection{RFRS groups and agrarian Betti numbers}

The following definition is due to Agol \cite{Agol2008} and played a crucial role in solving the Virtual Fibring Conjecture for hyperbolic $3$-manifolds.

\begin{defn}
Let $G$ be a group.  We say that $G$ is \emph{residually finite rationally solvable (RFRS)} if there is a chain of finite index normal subgroups \[G=G_0\geqslant G_1\geqslant G_2 \geqslant \cdots\]
     of $G$ such that
\begin{enumerate}
    \item $\bigcap_\NN G_i=\{1\}$;
    \item $\ker\big( G_i\to H_1(G_i; \QQ) \big) \leqslant G_{i+1}$ for $i\geqslant 0$.
\end{enumerate}
\end{defn}

\begin{defn}
A group $G$ is \emph{indicable} if $G$ is trivial or admits an epimorphism to $\ZZ$.  We say that $G$ is \emph{locally indicable} if every finitely generated subgroup of $G$ is indicable.  
\end{defn}

Note that a subgroup of a RFRS group is RFRS and that RFRS groups are indicable.  Hence, RFRS groups are locally indicable.

\begin{defn}
Let $R$ and $\cald$ be skew-fields, let $G$ be a locally indicable group, and let $\psi\colon RG\to \cald$ be a ring homomorphism.  The pair $(\cald,\psi)$ is \emph{Hughes-free} if
\begin{enumerate}
    \item $\cald$ is generated by $\psi(RG)$ as a skew-field, that is, $\langle\psi(RG)\rangle=\cald$;
    \item for every finitely generated subgroup $H\leqslant G$, every normal subgroup $N\triangleleft H$ with $H/N\cong \ZZ$, and every set of elements $h_1,\dots,h_n\in H$ lying in distinct cosets of $N$, the sum
    \[\langle \psi(RN)\rangle\cdot\psi(h_1)+\dots+\langle \psi(RN)\rangle\cdot\psi(h_n) \]
    is direct.
\end{enumerate}
By Ian Hughes \cite{Hughes1970}, for fixed $R$ and $G$, if such a pair $(\cald,\psi)$ exists, then $\cald$ is unique up to $RG$-algebra isomorphism.  In this case we denote $\cald$ by $\cald_{RG}$.
\end{defn}
(Like the property, the Hughes mentioned here and the first author are free of any of relation.)

The following result is due to Jaikin-Zapirain.

\begin{prop}\emph{\cite[Corollary~1.3]{Jaikin2021}}
If $G$ is a RFRS group and  $R$ is a skew-field,  then $\cald_{RG}$ exists and it is the universal division ring of fractions of $RG$.
\end{prop}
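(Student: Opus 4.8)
The statement packages two claims: that $RG$ admits a Hughes-free division ring of fractions $(\cald,\psi)$ — so that $\cald_{RG}:=\cald$ is well defined by the uniqueness theorem of Hughes recalled above — and that this $\cald_{RG}$ is moreover the \emph{universal} division ring of fractions of $RG$. The plan is to treat these two claims separately, since they rest on genuinely different inputs, and then to invoke \cite[Corollary~1.3]{Jaikin2021}, which packages exactly this combination.

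For existence I would work directly with the RFRS filtration $G=G_0\geqslant G_1\geqslant\cdots$. Each $G_i$ is again RFRS, hence locally indicable, and $G_i/\ker\!\big(G_i\to H_1(G_i;\QQ)\big)$ is finitely generated torsion-free abelian. The base case is precisely this abelian situation: for a finitely generated torsion-free abelian group $A$ and a skew-field $R$, the group ring $RA$ is a Noetherian Ore domain, and one checks directly that the inclusion $RA\hookrightarrow\Frac(RA)$ into its Ore division ring of fractions is Hughes-free — the directness required is just the linear independence of the monomials $t^{k}$, $k\in\ZZ$, over the coefficient skew-field. One then propagates this along the filtration and passes to the appropriate limit, using $\bigcap_i G_i=\{1\}$ to guarantee faithfulness of $\psi$; this is the construction carried out by Jaikin-Zapirain. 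By Hughes's theorem the resulting $\cald$ is independent of all choices, so it is legitimately denoted $\cald_{RG}$.

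For universality the essential ingredient is Jaikin-Zapirain's theorem that \emph{any} Hughes-free division ring of fractions of $RG$ is automatically universal — every division ring of fractions of $RG$ is a specialisation of it. Combined with the existence step, this gives the Proposition; in the write-up it is cleanest to simply cite \cite[Corollary~1.3]{Jaikin2021} once these two components have been isolated.

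I expect the existence/construction step to be the genuine obstacle: one must check that the division closure obtained at each stage of the filtration is actually a skew-field rather than merely a domain, and that Hughes-freeness survives both the extensions occurring along the filtration and the passage to the limit — this is the technical core of Jaikin-Zapirain's argument. The universality statement, by contrast, can be treated as a black box once existence is in hand.
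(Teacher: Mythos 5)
The paper does not prove this Proposition at all---it is stated verbatim as a cited result, \cite[Corollary~1.3]{Jaikin2021}, and the burden of proof is fully delegated to Jaikin-Zapirain. So there is no in-paper argument to compare your proposal against; what the paper ``does'' is exactly what you suggest at the end of your write-up, namely cite the corollary.

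That said, your decomposition into (i) existence of a Hughes-free embedding $RG \rightarrowtail \cald$ and (ii) universality of any such $\cald$ is the right way to read the statement, and your identification of (ii) as the main theorem of Jaikin-Zapirain's paper is accurate. Two small cautions on the existence sketch. First, the abelian base case as you phrase it is fine ($R[\ZZ^n]$ is an iterated Ore extension of a skew-field, hence a Noetherian domain, hence Ore), but the propagation step is not literally an induction up the RFRS tower: one has to pass to the limit carefully, and the assertion that the division closure at each stage is again a skew-field (rather than a larger domain) is precisely where the hard analysis lives --- you flag this, which is good, but don't underestimate that it is where Hughes's theorem and the RFRS condition interact. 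Second, be careful not to conflate faithfulness of $\psi$ with well-definedness of $\cald_{RG}$: the use of $\bigcap_i G_i = \{1\}$ is about embedding $RG$ into the limiting ring, while the independence from choices is Hughes's uniqueness theorem applied afterwards. Since the paper's intent is just to import the result, citing the corollary, as you ultimately recommend, is the appropriate move; the sketch is useful context but should not be presented as a self-contained proof.
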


\begin{defn}
A group $G$ is \emph{agrarian} over a ring $R$ if there exists a skew-field $\cald$ and a monomorphism $\psi\colon RG \rightarrowtail \cald$ of rings.  If $G$ is agrarian over $R$, then we define the \emph{agrarian $\cald$-homology} to be
\[H_j^\cald(G) =  \Tor^{RG}_j(R,\cald)\]
where $R$ is the trivial $RG$-module and $\cald$ is viewed as an $\cald$-$RG$-bimodule via the embedding $RG\rightarrowtail \cald$.  Since modules over a skew-field have a canonical dimension function taking values in $\NN\cup\{\infty\}$ we may define the \emph{agrarian $\cald$-Betti number} by
\[b_j^{\cald}(G) =  \dim_\cald H_j^\cald(G). \]
When $G$ is RFRS, by the previous proposition, we have (up to $RG$-isomorphism) a canonical choice $\cald_{RG}$ of $\cald$ for each skew-field $R$.
\end{defn}

\begin{thm}
	\label{thm.Fisher}
Let $R$ be a skew-field and let $n \in \NN$. Let $G$ be a virtually RFRS group of type $\mathsf{FP}_n(R)$.  The following are equivalent:
\begin{enumerate}
    \item $b_j^{\cald_{RG}}(G)=0$ for all $j\leqslant n$; \label{fisher 1}
    \item $G$ is virtually $\mathsf{FP}_n(R)$-fibred; \label{fisher 2}
    \item $G$ is virtually $\mathsf{FP}_n(R)$-semi-fibred. \label{fisher 3}
\end{enumerate}
\end{thm}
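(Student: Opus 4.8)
The equivalence $(\ref{fisher 2}) \Rightarrow (\ref{fisher 3})$ is immediate from the definitions, since an $\mathsf{FP}_n(R)$-fibred character is in particular $\mathsf{FP}_n(R)$-semi-fibred, and passing to a finite-index subgroup preserves this. The implication $(\ref{fisher 1}) \Rightarrow (\ref{fisher 2})$ is the substantive ``fibring theorem'' for RFRS groups and should follow from the work of Kielak and its $\mathsf{FP}_n$-refinement by Fisher; the point is that vanishing of the agrarian Betti numbers $b_p^{\cald_{RG}}(G)$ up to degree $n$ over the Hughes-free (hence universal) division ring $\cald_{RG}$ is exactly the $\cald_{RG}$-acyclicity condition that, via a Sikorav-type Novikov criterion after passing to a suitable RFRS tower and a character in a rational polytope, produces a finite-index subgroup $G'$ and a character $\varphi \colon G' \to \Z$ whose kernel is of type $\mathsf{FP}_n(R)$. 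So the real content is $(\ref{fisher 3}) \Rightarrow (\ref{fisher 1})$.

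For $(\ref{fisher 3}) \Rightarrow (\ref{fisher 1})$, I would argue as follows. First reduce to the RFRS (rather than virtually RFRS) case: if $G' \leqslant G$ has finite index and $b_p^{\cald_{RG'}}(G') = 0$ for $p \leqslant n$, then since $\cald_{RG}$ restricts (as an $RG'$-bimodule) to something built from $\cald_{RG'}$ — concretely $RG \otimes_{RG'} \cald_{RG'}$ embeds in $\cald_{RG}$ and the induction/Shapiro argument identifies $H_p^{\cald_{RG}}(G)$ with $\cald_{RG}\otimes_{\cald_{RG'}} H_p^{\cald_{RG'}}(G')$ up to the canonical dimension function — one gets $b_p^{\cald_{RG}}(G) = 0$ as well. (This uses that $\cald_{RG}$ is the \emph{universal} division ring of fractions, so that these induced modules sit inside it compatibly.) So assume $G$ itself is RFRS and $\mathsf{FP}_n(R)$-semi-fibred, i.e. there is a character $\varphi \colon G \to \Z$ with $\varphi \in \Sigma^n(G;R) \cup -\Sigma^n(G;R)$; replacing $\varphi$ by $-\varphi$ if needed, assume $\varphi \in \Sigma^n(G;R)$, so $G_\varphi$ is of type $\mathsf{FP}_n(R)$.

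Now compute the agrarian homology using the Novikov ring. Since $G$ is RFRS (hence locally indicable) with universal division ring $\cald_{RG}$, and $\varphi \in \Sigma^n(G;R)$, Fisher's version of Sikorav's theorem (the same tool invoked in the proof of \cref{semifibred so Alex}) gives a partial chain contraction of a projective $RG$-resolution $C_\bullet$ of $R$ after tensoring up to the twisted Novikov ring $\nov R G \varphi$, valid in degrees $\leqslant n$; this yields $H_p(G; \nov R G \varphi) = 0$ for $p \leqslant n$. The Novikov ring $\nov R G \varphi$ embeds in $\cald_{RG}$ compatibly with the $RG$-structure — this is precisely the agrarian/Novikov comparison that underlies the whole circle of ideas — so $\cald_{RG}$ is flat over (the image of) $\nov R G \varphi$, both being rings over which the relevant modules behave rigidly, and tensoring the vanishing up gives $H_p(G;\cald_{RG}) = 0$, i.e. $b_p^{\cald_{RG}}(G) = 0$, for all $p \leqslant n$. (One can alternatively phrase this as: the vanishing of the Novikov homology in a single direction $\varphi$ already forces $\cald_{RG}$-acyclicity, because $\cald_{RG}$ is a localisation/completion-compatible overring.)

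\textbf{Main obstacle.} The delicate point is the passage from ``$\mathsf{FP}_n(R)$-semi-fibred in the single direction $\varphi$'' to the $\cald_{RG}$-acyclicity statement, and symmetrically the extraction in $(\ref{fisher 1})\Rightarrow(\ref{fisher 2})$ of an \emph{honest fibred} character from acyclicity: both rely on the polytope/Novikov machinery of Kielak and Fisher (that $\cald_{RG}$-acyclicity is equivalent to Novikov acyclicity in a dense set of rational directions, upgraded to finitely generated kernels after passing down the RFRS tower), together with the compatibility of $\cald_{RG}$ with Novikov rings over finite-index subgroups. I expect that essentially all of this is already packaged in \cite{Kielak2020RFRS} and \cite{Fisher2021}, so the proof should amount to citing those results in the right order and doing the finite-index bookkeeping for the ``virtually'' quantifier; the only genuine care needed is checking that the dimension function $\dim_{\cald_{RG}}$ interacts correctly with induction along finite-index inclusions, which is where I would spend the most effort.
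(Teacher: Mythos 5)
Your outline for $(\ref{fisher 3})\Rightarrow(\ref{fisher 1})$ matches the paper's strategy at the top level (reduce to the RFRS case, use Fisher's Sikorav-type theorem to get Novikov acyclicity, then transfer to $\cald_{RG}$), but the transfer step contains a genuine error. You assert that ``$\nov R G \varphi$ embeds in $\cald_{RG}$ compatibly with the $RG$-structure'' and deduce flatness of $\cald_{RG}$ over the Novikov ring. This is false: $\nov R G \varphi$ is not a subring of $\cald_{RG}$. The Novikov ring consists of bounded-below twisted Laurent \emph{power series} with coefficients in the full group ring $R(\ker\varphi)$, whereas $\cald_{RG}$ is the skew-field generated by $RG$; there is no inclusion in that direction, and the Novikov ring is not even a division ring. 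Without a correct comparison, the invocation of ``the agrarian/Novikov comparison that underlies the whole circle of ideas'' is not an argument.

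What the paper actually does is introduce an auxiliary skew-field $\mathbb K$: the twisted Laurent power series in $t$ with coefficients in $\cald_{R(\ker\varphi)}$, where $t\in G$ has $\varphi(t)=1$ and the twisting extends the conjugation action of $t$ on $\ker\varphi$ (this extension exists by Hughes-freeness of $\cald_{R(\ker\varphi)}$). Then \emph{both} $\nov R G \varphi$ and $\cald_{RG}$ embed in $\mathbb K$: the first because $R(\ker\varphi)\hookrightarrow\cald_{R(\ker\varphi)}$ induces an embedding of the respective twisted power series rings, and the second because Hughes-freeness identifies $\cald_{RG}$ with the division closure of $R(\ker\varphi)[t^{\pm1}]\cong RG$ inside $\mathbb K$. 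One then propagates the partial chain contraction from $\nov R G \varphi$ to $\mathbb K$ to get $\mathrm{Tor}_i^{RG}(R,\mathbb K)=0$ for $i\leqslant n$, and since $\mathbb K$ is a non-zero module over the skew-field $\cald_{RG}$ — hence faithfully flat — this forces $\mathrm{Tor}_i^{RG}(R,\cald_{RG})=0$. Your proposal never identifies the need for $\mathbb K$, and without it the argument does not close. (The reduction to the RFRS case is also more delicate than the Shapiro-style sketch you give; the paper uses the scaling of $b_p^{\cald_{RG}}$ under finite-index inclusions from Fisher, which is the precise statement one needs rather than a qualitative compatibility.)
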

\begin{proof}
	The equivalence of the first two items is \cite[Theorem~6.6]{Fisher2024}. The implication \eqref{fisher 2} $\Rightarrow$ \eqref{fisher 3} is clear, so let us prove \eqref{fisher 3} $\Rightarrow$ \eqref{fisher 1}.
	
	By \cite[Lemma 6.3]{Fisher2024}, the numbers $b_j^{\cald_{RG}}(G)$ scale with the index when passing to finite-index subgroups.  Thus, we may assume without loss of generality that $G$ itself is $\mathsf{FP}_n(R)$-semi-fibred. In particular, let $\varphi \in \Sigma^n(G;R)$ witness this semi-fibration. By \cite[Lemma 5.3]{Fisher2024}, we have
	\[
	\mathrm{Tor}_j^{RG}(R,\nov R G \varphi) =0
	\]
	for all $0 \leqslant j \leqslant n$. 
	
	Let $\mathbb K$ be the skew-field of twisted Laurent series with variable $t$ and coefficients in the skew-field $\cald_{R(\ker \varphi)}$; the variable $t$ is an element of $G$ with $\varphi(t) =1$, a generator of $\Z$, and the twisting extends the conjugation action of $t$ on $\ker \varphi$ to  $\cald_{R(\ker \varphi)}$ -- such an extension is possible since $\cald_{R(\ker \varphi)}$ is Hughes free, see \cite[p.8]{Jaikin2021} for an explanation of this fact. The skew-field $\mathbb K$ contains $\nov R G \varphi$, since the latter can also be viewed as a ring of twisted Laurent series in $t$ with coefficients in $R (\ker \varphi)$, with the twisting described above. Hence, using chain contractions, we see that
	\[
	\mathrm{Tor}_j^{RG}(R,\mathbb K) =0
	\]
		for all $0 \leqslant j \leqslant n$. 
		
Now, Hughes-freeness of  $\cald_{RG}$ tells us that $\cald_{RG}$ is isomorphic as an $R G$-module to the division closure in $\mathbb K$ of the twisted Laurent polynomial ring $R(\ker \varphi)[t^{\pm 1}]$,  where we identify  the rings $R(\ker \varphi)[t^{\pm 1}]$ and $RG$ using the group isomorphism $(\ker \varphi) \rtimes \Z = G$.  This endows $R(\ker \varphi)[t^{\pm 1}]$ with an $R G$-bimodule structure. Hence, we may view $\cald_{RG}$ as a subring of $\mathbb K$, and view $\mathbb K$ as a $\cald_{RG}$-module. Since both $\cald_{RG}$ and $\mathbb K$ are skew-fields, $\mathbb K$ is a flat as a $\cald_{RG}$-module.
We conclude that 
	\[
\mathrm{Tor}_j^{RG}(R,\cald_{RG}) =0
\]
for all $0 \leqslant j \leqslant n$, as claimed.
\end{proof}

\begin{thm}
	\label{detecting betti}
	Let $n\in\NN \cup \{\infty\}$, and let $\FF$ be a  finite  field.  Let $G_A$ and $G_B$ be $n$-good virtually RFRS groups of type $\mathsf{FP}_{n}(\FF)$ and suppose that $\widehat{G}_A\cong\widehat{G}_B$.  Suppose that every finite-index subgroup of $G_A$ and $G_B$ is in $\TAP_{n}(\FF)$.  We have
	\[
	\min \{ j \leqslant n \mid b^{\cald_{\FF G_A}}_{j}(G_A) \neq 0 \} = \min \{ j \leqslant n \mid b^{\cald_{\FF G_B}}_{j}(G_B) \neq 0 \}
	\]
	where we take the minimum of the empty set to be $\infty$.
\end{thm}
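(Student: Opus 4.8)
The plan is to convert the vanishing of the agrarian Betti numbers into a statement about virtual semi-fibring via \cref{thm.Fisher}, and then to transport that statement across the profinite isomorphism by means of \cref{Prop.pro.higherdim}, one homological degree at a time. First I would record that, writing $k_X=\min\{j\leqslant n\mid b^{\cald_{\FF G_X}}_{j}(G_X)\neq 0\}$ for $X\in\{A,B\}$, the number $k_X$ is determined by the down-set $D_X=\{j\leqslant n\mid b^{\cald_{\FF G_X}}_{p}(G_X)=0\text{ for all }p\leqslant j\}$ -- indeed $k_X=\infty$ when $D_X=\{0,\dots,n\}$ and $k_X=|D_X|$ otherwise -- so that it is enough to prove $D_A=D_B$. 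By \cref{thm.Fisher}, applied with coefficients in $\FF$ and with $n$ replaced by $j\leqslant n$, membership $j\in D_X$ is equivalent to $G_X$ being virtually $\mathsf{FP}_j(\FF)$-semi-fibred. Thus the theorem reduces to showing that, for each $j\leqslant n$, the group $G_A$ is virtually $\mathsf{FP}_j(\FF)$-semi-fibred if and only if $G_B$ is; and by symmetry I only need one implication.

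To prove that implication, I would fix $j\leqslant n$, assume $G_A$ is virtually $\mathsf{FP}_j(\FF)$-semi-fibred, and pick a finite-index subgroup $H_A\leqslant G_A$ that is $\mathsf{FP}_j(\FF)$-semi-fibred. Using the given isomorphism $\Psi\colon\widehat{G_A}\to\widehat{G_B}$ (continuous by Nikolov--Segal), the closure $\overline{H_A}$ is an open subgroup of $\widehat{G_A}$, so $H_B=G_B\cap\Psi(\overline{H_A})$ is a finite-index subgroup of $G_B$ whose closure in $\widehat{G_B}$ equals $\Psi(\overline{H_A})$; since the profinite completion of a finite-index subgroup of an arbitrary group coincides with its closure in the ambient completion, this yields $\widehat{H_A}\cong\overline{H_A}\cong\Psi(\overline{H_A})\cong\widehat{H_B}$. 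The subgroups $H_A$ and $H_B$ inherit $j$-goodness from $G_A$ and $G_B$ by \cref{good is virtual}, they inherit the relevant finiteness type by being finite-index, and $H_B$ lies in $\TAP_j(\FF)$ by the standing hypothesis on finite-index subgroups. I would then apply \cref{Prop.pro.higherdim} at level $j$, with $H_B$ playing the role of ``$G_A$'' and $H_A$ that of ``$G_B$'', to conclude that $H_B$ is $\mathsf{FP}_j(\FF)$-semi-fibred, whence $G_B$ is virtually $\mathsf{FP}_j(\FF)$-semi-fibred. This closes the reduction.

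The hardest part, I expect, is not any single computation but the bookkeeping needed to invoke \cref{Prop.pro.higherdim} at every level $j\leqslant n$ after passing to finite-index subgroups: one has to make sure that $H_A$ and $H_B$ are simultaneously $j$-good (from \cref{good is virtual}), of the integral finiteness type required by the profinite-homology comparisons behind \cref{prop.ann.Liu} and \cref{prop.ngood}, and members of $\TAP_j(\FF)$. Granting this, the argument is purely formal: it identifies the first degree in which an agrarian Betti number of $G_X$ is non-zero with the first degree in which $G_X$ fails to be virtually $\mathsf{FP}(\FF)$-semi-fibred, and \cref{Prop.pro.higherdim} shows the latter is a profinite invariant.
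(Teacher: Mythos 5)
Your proof follows the paper's argument essentially verbatim: both convert the vanishing of agrarian Betti numbers below some level $j$ into virtual $\mathsf{FP}_j(\FF)$-(semi-)fibring via \cref{thm.Fisher}, pass to finite-index subgroups (using \cref{good is virtual} and the hypothesis on all finite-index subgroups being TAP), and then transport the fibring across the profinite isomorphism via \cref{Prop.pro.higherdim}, finishing by symmetry. Your reformulation in terms of the down-sets $D_A$ and $D_B$ is a minor repackaging of the paper's two-inequality argument, and your explicit construction of $H_B$ from $H_A$ via the closure in $\widehat{G_B}$ is just spelling out what the paper abbreviates as passing to corresponding finite-index subgroups.
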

\begin{proof}
	We first assume that $n \in \NN$.
	Since we are concerned with virtual properties we may assume without loss of generality that $G_A$ and $G_B$ are RFRS, $n$-good, of type $\mathsf{FP}_{n}(\Z)$, and all finite-index subgroups of $G_A$ and $G_B$ are in $\TAP_{n}(\FF)$; we have used \cref{good is virtual} here.  
	
	Suppose that $b^{\cald_{\FF G_A}}_{j}(G_A)=0$ for $j\leqslant m$ for some $m \leqslant n$. The group $G_A$ is virtually $\mathsf{FP}_m(\FF)$-fibred by \cref{thm.Fisher}. We may pass to further finite index subgroups of $G_A$ and $G_B$ and assume that $G_A$ is $\mathsf{FP}_m(\FF)$-fibred.
	By \cref{Prop.pro.higherdim}, the group $G_B$ is $\mathsf{FP}_m(\FF)$-semi-fibred, and hence 
	\[b^{\cald_{\FF G_B}}_{j}(G_B)=0\]
	for $j\leqslant m$ by \cref{thm.Fisher}. This shows an inequality between the minima in the statement. The argument is symmetric in $G_A$ and $G_B$, and hence we also obtain the converse inequality.
	
	\smallskip
	Now suppose that $n = \infty$. If both of the minima in the statement are $\infty$, then we are done. Without loss of generality let us suppose that the left-hand side one is equal to $m < \infty$.
We observe that $G_A$ and $G_B$ satisfy the hypothesis of our theorem for $n = m$, and hence the right-hand side minimum is also equal to $m$. 
\end{proof}

Observe that the above result applies in particular to finite products of limit groups.  Indeed, these are virtually RFRS because they are virtually special \cite{HsuWise2015}.

\bibliographystyle{halpha}
\bibliography{refs.bib}

\end{document}